\newtheorem{thm}{Theorem}[section]
\newtheorem{cor}[thm]{Corollary}
\newtheorem{claim}[thm]{Claim}
\newtheorem{lemma}[thm]{Lemma}
\newtheorem{prop}[thm]{Proposition}
\theoremstyle{definition}
\newtheorem{definition}[thm]{Definition}
\newtheorem{ex}[thm]{Example}
\newtheorem{remark}[thm]{Remark}
\newtheorem{problem}[thm]{Problem}
\title{On the geometry of van Kampen diagrams of graph products of groups}
\date{\today}
\author{Anthony Genevois}
\begin{document}

\maketitle

\begin{abstract}
In this article, we propose a geometric framework dedicated to the study of van Kampen diagrams of graph products of groups. As an application, we find information on the word and the conjugacy problems. The main new result of the paper deals with the computation of conjugacy length functions. More precisely, if $\Gamma$ is a finite graph and $\mathcal{G}= \{ G_u \mid u \in V(\Gamma) \}$ a collection of finitely generated groups indexed by the vertices of $\Gamma$, then
$$\max\limits_{u \in V(\Gamma)} \mathrm{CLF}_{G_u}(n) \leq \mathrm{CLF}_{\Gamma \mathcal{G}}(n) \leq (D+1) \cdot n + \max\limits_{\Delta \subset \Gamma \ \text{complete}} \sum\limits_{u \in V(\Delta)} \mathrm{CLF}_{G_u}(n)$$
for every $n \geq 1$, where $D$ denotes the maximal diameter of a connected component of the opposite graph $\Gamma^{\mathrm{opp}}$. As a consequence, a graph product of groups with linear conjugacy length functions has linear conjugacy length function as well.
\end{abstract}

\tableofcontents

\section{Introduction}

Given a graph $\Gamma$ and a collection of groups $\mathcal{G}= \{ G_u \mid u \in V(\Gamma) \}$ indexed by the vertex-set $V(\Gamma)$ of $\Gamma$, we define the \emph{graph product} $\Gamma \mathcal{G}$ as the quotient of the free product of \emph{vertex-groups}
$$ \left( \underset{u \in V(\Gamma)}{\ast} G_u \right) / \langle \langle [g,h]=1 \ \text{if $g \in G_u$ and $h \in G_v$ for some $\{u,v\} \in E(\Gamma)$} \rangle \rangle$$
where $E(\Gamma)$ denotes the edge-set of $\Gamma$. The operation is usually thought of as an interpolation between free products and direct sums, as $\Gamma \mathcal{G}$ reduces to the free product of $\mathcal{G}$ if $\Gamma$ does not contain edges and to the direct sum of $\mathcal{G}$ if $\Gamma$ is a complete graph. Graph products of groups have been introduced by Elisabeth Green in her thesis \cite{GreenGP} as a common generalisation of two intensively studied families of groups, namely right-angled Artin groups (which correspond to graph products of infinite cyclic groups) and right-angled Coxeter groups (which correspond to graph products of cyclic groups of order two). 

Since Green's thesis, graph products of groups have been studied greatly, and powerful tools are now available to study them, including a normal form \cite[Theorem 3.9]{GreenGP}, canonical decompositions as amalgamated products \cite[Lemma 3.20]{GreenGP} (and their associated Bass-Serre trees \cite[Section 6]{MinasyanOsin}), a theory of parabolic subgroups \cite{TitsGP}, and different geometric models (namely, a right-angled building \cite{DavisBuildingsCAT}, a CAT(0) cube complex \cite{DavisBuildingsCAT} and a quasi-media graph \cite{Qm} (see \cite[Section 2]{AutGPMartin} for more details on the connections between all these models)). In this list, the normal form is really foundational, as all the other tools are based on it, making graph products combinatorial objects. In this paper, we would like to think of graph products as geometric objects instead. A first step in this direction has been realised in \cite{Qm}, proposing a quasi-median graph as the good geometric point of view. Although most of the tools mentioned above can be studied in a very nice way in this geometric framework, the normal form is necessary to show that the quasi-median geometry actually applies. 

The purpose of this article is to complete the geometric description of graph products of groups. More precisely, given a graph $\Gamma$ and a collection of groups $\mathcal{G}$ indexed by $V(\Gamma)$, we are interested in \emph{diagrams} over the presentation
$$\mathcal{Q}=\left\langle x_g, \ g \in \bigcup\limits_{u \in V(\Gamma)} G_u \backslash \{1\} \left| \begin{array}{l} [x_g,x_h]=1 \ \text{if $g \in G_u$ and $h \in G_v$ where $\{u,v\} \in E(\Gamma)$} \\ x_gx_h=x_{gh} \ \text{if $g,h,gh \in G_u \backslash \{1\}$ for some $u \in V(\Gamma)$} \end{array} \right. \right\rangle$$
of $\Gamma \mathcal{G}$, which are planar $2$-complexes whose $2$-cells are squares and triangles. (We refer to Section \ref{section:def} for precise definitions.) In order to study the geometry of these diagrams, we introduce \emph{dual curves}. More precisely, given a diagram $D$ over $\mathcal{Q}$, a {dual curve} $\alpha$ is a minimal subset satisfying the following three conditions:
\begin{itemize}
	\item for every edge $e \subset D$, the intersection $\alpha \cap D$ is either empty or the midpoint of $e$;
	\item for every square $C \subset D$, if $\alpha$ contains the midpoint of an edge of $C$ then it contains the straight line which links it to the midpoint of the opposite edge;
	\item for every triangle $T \subset D$, if $\alpha$ contains the midpoint of an edge of $T$ then it contains the three straight lines which link the center of $T$ to the midpoints of all its edges.
\end{itemize}
See Figure \ref{dualcurve} for an example. These dual curves generalise dual curves in disc diagrams of CAT(0) cube complexes \cite{MR1347406, BigWise} and are directly inspired by the definition of hyperplanes in quasi-median graphs \cite{Qm}. The main result we prove about dual curves in diagrams of graph products is the following:

\begin{prop}\label{intro:dualcurve}
Let $D$ be a diagram and $C \subset D$ a dual curve. If $C$ is not a circle, then an embedded circle in $C$ cannot be homotopically trivial in $D$. 
\end{prop}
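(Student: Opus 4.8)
The plan is to prove the contrapositive form: if $\gamma \subseteq C$ is an embedded circle that is homotopically trivial in $D$, then $C=\gamma$, so that $C$ is a circle. Being homotopically trivial, $\gamma$ bounds a disc $E\subseteq D$. The first observation is that $\gamma$ must coincide with $C$ unless it branches: if $\gamma$ avoids every triangle and every square through which $C$ runs "twice", then $\gamma$ itself satisfies the three defining conditions of a dual curve, hence equals $C$ by the minimality clause in the definition. So I assume $C\neq\gamma$; then at some center $p\in\gamma$ of a triangle (or of a doubly crossed square) the curve $C$ carries a straight segment not lying on $\gamma$, and following $C$ from $p$ in that direction yields a connected subset $K$ of $C$ that is disjoint from $\gamma$ — a dual curve can only branch at cell-centers, never at edge-midpoints, where its local degree is at most two. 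Since $K$ is connected and disjoint from $\gamma=\partial E$, it lies either entirely in the open disc $\operatorname{int}(E)$ or entirely outside $\overline E$.

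I would then run an innermost-disc / minimal-area argument: among all pairs consisting of a dual curve $\alpha$ of $D$ and a homotopically trivial embedded circle $\delta\subsetneq\alpha$, choose one, $(C,\gamma)$, whose disc $E$ has the least area (fewest $2$-cell pieces). Suppose the subset $K$ above lies in $\operatorname{int}(E)$. Then $\overline K$ is a finite connected subgraph of $C$ contained in $\overline E$ whose only possible endpoints are cell-centers on $\gamma$, since there is no boundary of $D$ inside $\operatorname{int}(E)$. Hence $\overline K$ either contains an embedded cycle lying in $\operatorname{int}(E)$, or joins $\gamma$ to itself along an arc through $\operatorname{int}(E)$; in the first case that cycle, in the second case one of the two circles obtained by completing the arc with a sub-arc of $\gamma$, is a homotopically trivial embedded circle properly contained in the dual curve $C$ and bounding a strictly smaller disc, contradicting minimality. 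The same extraction applies whenever any dual curve is forced to enter $\operatorname{int}(E)$ — if need be, an entirely parallel innermost argument shows a dual curve cannot itself be a homotopically trivial circle; in particular the "far edge" of any square crossed by $\gamma$ would place an edge-midpoint, hence a dual curve, inside $\operatorname{int}(E)$. So in the minimal configuration $\gamma$ crosses no squares and every branch of $C$ off $\gamma$ points out of $E$.

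It remains to rule out this last configuration, in which $\gamma$ passes only through triangles and, at each of them, $E$ occupies only the thin sector delimited by the two arcs of $\gamma$. Here I would argue combinatorially with the induced cellulation of $E$: its interior contains no $2$-cells and no edge-midpoints, so $E$ is assembled from the quadrilateral sectors cut off by $\gamma$ from the triangles it meets, glued cyclically in a controlled way, and an Euler-characteristic (equivalently, a combinatorial Gauss–Bonnet) computation is incompatible with $\chi(E)=1$. I expect this to be the main obstacle: one has to pin down exactly how consecutive sectors are glued — in particular whether the "far" vertices of the triangles are distinct or coincide — and the degenerate local pictures that would otherwise survive (two triangles sharing two edges, several triangles fanning around a single vertex, a dual curve re-crossing itself in a square) are precisely the ones excluded by the reducedness hypothesis built into the definition of a diagram in Section~\ref{section:def}. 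Disposing of these degeneracies carefully, and checking that the precise meaning of "homotopically trivial in $D$" closes off the remaining borderline cases, is where the real work lies.
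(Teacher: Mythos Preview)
Your minimal-area opening (steps 1--4) is fine and matches the paper: choose a homotopically trivial embedded circle $\gamma\subsetneq C$ bounding a disc $E$ of least area, and observe that at every singularity of $\gamma$ the third arc of $C$ must point \emph{out} of $E$. The divergence, and the gap, is in what you do next.

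Your step ``$\gamma$ crosses no squares in the minimal configuration'' does not follow. If $\gamma$ crosses a square $S$, the transverse dual curve $\alpha$ through $S$ does enter $\operatorname{int}(E)$, but your ``same extraction'' does not apply to it: the extraction you used for $K\subset C$ built a smaller circle \emph{inside $C$} by concatenating an arc of $K$ with a sub-arc of $\gamma$. For $\alpha\neq C$ this concatenation lies in $\alpha\cup C$, not in a single dual curve, so it produces no counterexample to minimality. Nothing prevents $\alpha\cap E$ from being a forest with all its leaves at the square-centres on $\gamma$; there is no smaller disc to extract. (Your auxiliary assertion that ``a dual curve cannot itself be a homotopically trivial circle'' is also unsupported; note that Corollary~\ref{cor:dualtree} explicitly allows circles among the dual curves of a van Kampen diagram.) So the minimal $\gamma$ may very well pass through squares, and your combinatorial endgame would have to handle half-squares as well as triangle-sectors, together with arbitrary cells of $D$ lying wholly inside $E$; the Euler-characteristic computation you sketch is then far from the ``only triangles'' picture you aimed for, and you yourself flag it as the unfinished hard part.

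The paper takes a different route that sidesteps all of this. Starting from the same minimal $\gamma$ (with every extra branch of $C$ pointing outward), it does \emph{not} analyse $E$ statically. Instead it modifies the diagram by elementary moves: pentagonal moves slide the triangles along the cycle until they are pairwise adjacent, and then flips merge adjacent triangles in pairs, reducing the number of singularities on the cycle to exactly one. The contradiction is then purely a labelling argument (Claim~\ref{claim:singlesing}): along the single arc of the resulting circle all edges carry the same generator by Lemma~\ref{lem:labelarc}, which forces the remaining triangle's third edge to be labelled by the identity, contradicting the fact that $1$ is not a generator in $\mathcal{Q}$. Your approach never invokes the labels, and that is likely why it stalls: the paper's final step is genuinely an algebraic fact about $\mathcal{Q}$, not a topological one about the underlying $2$-complex.
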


As a consequence, dual curves in \emph{van Kampen diagrams} (ie., in simply connected diagrams) must circles or trees. This observation leads to a simple and geometric proof of the normal form in graph products. More precisely, 
given a graph $\Gamma$ and a collection of groups $\mathcal{G}$ indexed by $V(\Gamma)$, if $w=s_1 \cdots s_n$ is a word of length $n \geq 1$ written over $\bigcup\limits_{u \in V(\Gamma)} G_u$, then $s_i$'s are called the \emph{syllables} of $w$, and we say that $w$ is \emph{graphically reduced}
\begin{itemize}
	\item if $n=1$ and $s_1=1$, or $s_i \neq 1$ for every $1 \leq i \leq n$;
	\item and if, for every $1 \leq i < j \leq n$, either $s_i$ and $s_j$ do not belong to the same vertex-group or there exists some $i< k < j$ such that $s_k$ does not belong to a vertex-group adjacent to the common vertex-group containing $s_i$ and $s_j$. 
\end{itemize}
It is clear that any element of a graph product can be written as a graphically reduced word. Then:

\begin{thm}\label{intro:normalform}
Let $\Gamma$ be a simplicial graph and $\mathcal{G}$ a collection of groups indexed by $V(\Gamma)$. If $w_1$ and $w_2$ are two graphically reduced words which are equal in $\Gamma \mathcal{G}$, then it is possible to transform $w_1$ into $w_2$ by permuting successive syllables which belong to adjacent vertex-groups. 
\end{thm}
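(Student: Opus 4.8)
The plan is to deduce the normal form theorem from the structural result about dual curves in van Kampen diagrams (Proposition~\ref{intro:dualcurve}), so that the argument becomes purely geometric. First I would set up the following picture: given two graphically reduced words $w_1$ and $w_2$ with $w_1 = w_2$ in $\Gamma\mathcal{G}$, the word $w_1 w_2^{-1}$ represents the trivial element, so by van Kampen's lemma there is a van Kampen diagram $D$ over $\mathcal{Q}$ whose boundary is labelled by $w_1 w_2^{-1}$, with the boundary subdivided into an ``upper'' path $\partial_1$ reading $w_1$ and a ``lower'' path $\partial_2$ reading $w_2$. I would choose $D$ to be minimal, say with the fewest squares and triangles among all such diagrams (or minimal for some suitable complexity measure). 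The goal is to show that the combinatorics of dual curves in $D$ force a syllable-by-syllable matching between $w_1$ and $w_2$ up to shuffles across edges of $\Gamma$.

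The key steps I would carry out are as follows. First, relate syllables of a boundary word to dual curves: each edge of $\partial D$ is dual to exactly one dual curve, and because $w_i$ is graphically reduced, consecutive edges of $\partial_i$ lying in the same vertex-group have been collapsed into a single syllable via the triangle relations, so the edges of $\partial_1$ correspond bijectively to syllables $s_1, \ldots, s_n$ of $w_1$ and similarly for $w_2$. Second, use Proposition~\ref{intro:dualcurve}: in the simply connected diagram $D$ every dual curve is either a circle or a tree; a circle disjoint from $\partial D$ could be removed, contradicting minimality (one surgers $D$ along an innermost such circle), so after further minimisation every dual curve is a tree, hence in particular an arc with exactly two endpoints on $\partial D$ (a dual curve that is a tree and meets the boundary must, by an innermost/leaf argument together with minimality ruling out branching and ruling out dual curves disjoint from $\partial D$, be a simple arc). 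Third, a dual curve joining two edges $e, f$ of $\partial D$ must join edges carrying labels in the same vertex-group $G_u$ (because squares only transport a dual curve between parallel edges labelled by commuting — hence, by choice, possibly distinct — vertex-groups, while triangles keep us in the same vertex-group; one tracks the vertex-group label along the curve and observes it is constant). Fourth — and this is where graphical reducedness of each individual word $w_i$ is essential — no dual curve can have both endpoints on $\partial_1$: if $e$ and $f$ are two edges of $\partial_1$ dual to the same curve, with labels in $G_u$, then the arc of $\partial_1$ between them together with the dual curve bounds a subdiagram, and analysing the syllables strictly between $e$ and $f$ on $\partial_1$ shows they all lie in vertex-groups adjacent to $u$ (the curve can be pushed past them), contradicting the reducedness of $w_1$; symmetrically for $\partial_2$. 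Hence every dual curve in $D$ runs from $\partial_1$ to $\partial_2$, giving a bijection between the syllables of $w_1$ and those of $w_2$, matching a syllable in $G_u$ to a syllable in the same $G_u$.

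Finally I would extract the permutation. The dual curves, being disjoint simple arcs from $\partial_1$ to $\partial_2$ in a disc, are unlinked; tracing them across the disc yields, after reading off the pairing of endpoints, a matching of the syllables of $w_1$ with those of $w_2$, and the fact that the arcs are embedded and pairwise disjoint means that whenever the matching ``crosses'' (i.e. reverses the cyclic order of two syllables), the two relevant dual curves cross, which is impossible unless the corresponding vertex-groups are adjacent (two crossing dual curves force a square, whose label relation is precisely a commutation between adjacent vertex-groups). A short induction on the number of such inversions then realises the transformation from $w_1$ to $w_2$ as a sequence of swaps of adjacent syllables lying in adjacent vertex-groups, and also shows the matched syllables are in fact equal elements of their common vertex-group (the dual curve from $e$ to $f$ forces, via the triangle/square relations it passes through, equality of the group elements at its two ends). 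This completes the proof.

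I expect the main obstacle to be Step~4 of the second paragraph: ruling out a dual curve with both endpoints on the same boundary arc, and more generally keeping tight bookkeeping of how a dual curve interacts with the triangle-collapsed syllables — one must be careful that ``pushing the dual curve past a syllable'' is legitimate exactly when that syllable's vertex-group is adjacent to the one carried by the curve, and handle the degenerate cases (syllables equal to $1$, the case $n=1$, dual curves meeting the boundary at a vertex rather than in the interior of an edge) so that the reducedness hypothesis is used in full strength. The disc-topology ``unlinking'' argument in the last paragraph is geometrically clear but should be stated carefully, and minimality of $D$ must be invoked consistently to exclude circular dual curves and spurious configurations.
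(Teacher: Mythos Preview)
Your approach is essentially the same as the paper's: set up a van Kampen diagram for $w_1w_2^{-1}$, use Proposition~\ref{intro:dualcurve} to see that dual curves are trees, use graphical reducedness of each $w_i$ to show no dual curve meets the same boundary arc twice (this is exactly the paper's Claim~\ref{claim:graphicallyreduceddiagram}), conclude that every dual curve is a regular arc from $\partial_1$ to $\partial_2$, and then read off the sequence of shuffles from the crossings. Two minor points: your appeal to minimality to ``rule out branching'' is unnecessary, since the reducedness argument of your Step~4 already forces each tree to have at most two leaves and hence no singularities; and for the final extraction the paper sweeps a family of transverse arcs $\alpha_0=\partial_1,\alpha_1,\ldots,\alpha_{k+1}=\partial_2$ across $D$, each consecutive pair separated by a single crossing, which is a cleaner packaging of your induction on inversions (and note the dual curves are not literally disjoint --- they may cross in squares, which is precisely what produces the shuffles).
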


By studying more carefully van Kampen diagrams over $\mathcal{Q}$, we are also able to compute precisely the Dehn function of a graph product of finitely presented groups. More precisely:

\begin{thm}\label{intro:Dehn}
Let $\Gamma$ be a finite simplicial graph and $\mathcal{G}$ a collection of finitely presented groups indexed by $V(\Gamma)$. 
\begin{itemize}
	\item If $\Gamma$ is a clique, then $\delta_{\Gamma \mathcal{G}} \sim \max( \delta_G, G \in \mathcal{G})$.
	\item If $(\Gamma, \mathcal{G})$ satisfies Meier's condition and is not a clique, then $$\delta_{\Gamma \mathcal{G}} \sim \max \left( n \mapsto n, \widetilde{\delta_G}, G \in \mathcal{G} \right)= \max \left( \overline{\delta_G}, G \in \mathcal{G} \right).$$
	\item If $(\Gamma, \mathcal{G})$ does not satisfy Meier's condition and is not a clique, then $$\delta_{\Gamma \mathcal{G}} \sim \max \left( n \mapsto n^2, \widetilde{\delta_G}, G \in \mathcal{G} \right).$$
\end{itemize}
\end{thm}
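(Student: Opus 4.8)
The plan is to establish the three cases of Theorem~\ref{intro:Dehn} by combining the van Kampen diagram technology developed in the paper (dual curves, their topological rigidity from Proposition~\ref{intro:dualcurve}) with the structural description of graph products as iterated amalgamated products. For the lower bounds, one observes that each vertex-group $G_u$ embeds as a retract of $\Gamma\mathcal{G}$ (the retraction kills all other vertex-generators), and retracts of finitely presented groups do not increase Dehn functions up to $\sim$; hence $\delta_{\Gamma\mathcal{G}} \succeq \delta_{G_u}$ for every $u$, which gives the $\max(\delta_G)$ part in all three cases. When $\Gamma$ is not a clique there are two non-adjacent vertices $u,v$, so $\Gamma\mathcal{G}$ contains a copy of $G_u \ast G_v$, and an infinite virtually-free subgroup gives a lower bound of $n \mapsto n$; if moreover $(\Gamma,\mathcal{G})$ fails Meier's condition one can exhibit a van Kampen diagram realising area $\sim n^2$, typically coming from a Baumslag--Solitar-type or $F_2 \times \mathbb{Z}$-type relation forced by the combinatorics (an element of infinite order in a vertex-group conjugated across a separating edge), so $\delta_{\Gamma\mathcal{G}} \succeq n^2$. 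The ``$\widetilde{\delta_G}$'' terms (the Dehn function of $G$ with the convention that it is at least linear, or the relevant truncation) enter because each vertex relator must be filled inside a copy of $G_u$.

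For the upper bounds the main work is a \emph{surgery argument on van Kampen diagrams over $\mathcal{Q}$}. Given a null-homotopic word $w$ of length $n$ in the generators $x_g$, take a van Kampen diagram $D$ over $\mathcal{Q}$; its $2$-cells are squares (commutation relators) and triangles (multiplication relators within a single vertex-group). I would first use Proposition~\ref{intro:dualcurve}: in a van Kampen (simply connected) diagram every dual curve is a tree or a circle, and circles can be removed, so after cleanup every dual curve is an embedded arc with both endpoints on $\partial D$. Counting: the number of dual curves is at most the boundary length, each dual curve crossing the diagram, and the square $2$-cells are exactly the crossing points of pairs of dual curves, which bounds the number of squares by $O(n^2)$ in general and by $O(n)$ when Meier's condition (equivalently, the relevant connected-component/linking condition on $\Gamma$) prevents too many dual curves from pairwise crossing, and by $0$ when $\Gamma$ is a clique since then there are no commutation relations between non-equal syllables that need to appear --- more precisely when $\Gamma$ is a clique $\Gamma\mathcal{G} = \bigoplus G_u$ and a diagram can be taken to be a product of diagrams in the factors. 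The triangle $2$-cells, on the other hand, are grouped along each dual curve: the union of triangles met by a single dual curve corresponding to vertex $u$ assembles into a sub-diagram over the presentation of $G_u$, whose boundary has length $O(n)$, hence can be filled with area $\widetilde{\delta_{G_u}}(O(n))$. Summing over the $O(n)$ dual curves gives the $\max(\overline{\delta_G})$ bound in the Meier case and, combined with the $O(n^2)$ square count, the $\max(n^2, \widetilde{\delta_G})$ bound in general; in the clique case one gets $\max(\delta_G)$ directly from the product structure.

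Concretely I would proceed in this order: (1) reduce to diagrams with no dual circles and no ``bigons'' of dual curves, using Proposition~\ref{intro:dualcurve} and a minimal-area / minimal-number-of-cells choice of $D$; (2) prove the clique case, where the quasi-median graph of $\bigoplus G_u$ is a product and a filling of $w$ decomposes coordinate-wise, giving $\delta_{\Gamma\mathcal{G}} \sim \max\delta_G$; (3) in the non-clique case, set up the dual-curve counting to bound the number of squares, treating separately whether two dual curves can cross (this is exactly where the diameter $D$ of $\Gamma^{\mathrm{opp}}$ and Meier's condition govern whether the count is linear or quadratic in $n$); (4) for each vertex $u$, collect the triangles met by the $u$-labelled dual curves into sub-diagrams over the finite presentation of $G_u$ and bound their total area by $O(n)\cdot\widetilde{\delta_{G_u}}(O(n))$; (5) assemble the upper bound and match it against the lower bounds from the retraction and the $F_2$/non-Meier arguments, checking in each case that $\max(n, \widetilde{\delta_G}) = \max(\overline{\delta_G})$ as claimed. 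The main obstacle I anticipate is step (3)--(4): controlling how the squares and triangles interleave along and between dual curves, in particular showing that the sub-diagram carved out by a single $u$-dual-curve is genuinely a diagram over $G_u$'s presentation with boundary length $O(n)$ (one must rule out the boundary wandering back and forth and inflating its length), and extracting from Meier's condition precisely the statement that the pairwise-crossing graph of dual curves is sparse enough to give a linear rather than quadratic count. This is where Proposition~\ref{intro:dualcurve} and a careful analysis of the local structure of $\mathcal{Q}$ do the heavy lifting.
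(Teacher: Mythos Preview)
Your overall architecture for the \emph{non-Meier upper bound} is close to the paper's: the paper also takes a van Kampen diagram over $\mathcal{Q}$, uses Proposition~\ref{intro:dualcurve} to force dual curves to be trees, pushes all triangles together (its ``Condition~$(\ast)$''), and then bounds squares by $n^2$ and triangles by $\sum_G \overline{\delta_G}(n)$ (this is Proposition~\ref{prop:Dehn}). One correction here: your bound ``$O(n)\cdot\widetilde{\delta_{G_u}}(O(n))$'' obtained by summing over dual curves is too weak; what makes the argument work is that the \emph{total} boundary length of all the $u$-labelled triangle-blocks is at most $n$, giving $\overline{\delta_{G_u}}(n)$ rather than $n\cdot\delta_{G_u}(n)$.

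The genuine gap is your treatment of the \emph{Meier case}. You assert that under Meier's condition the pairwise-crossing graph of dual curves is sparse enough to give a linear square count, and you tie this to the diameter $D$ of $\Gamma^{\mathrm{opp}}$. Neither claim is correct. Meier's condition is not a purely combinatorial restriction on $\Gamma$: two of its three clauses (``no two infinite vertex-groups adjacent'', ``link of an infinite vertex-group is complete'') depend on which vertex-groups are \emph{infinite}, information that is invisible at the level of dual curves in a $\mathcal{Q}$-diagram. There is no reason the number of crossings in a minimal diagram over $\mathcal{Q}$ should drop to $O(n)$ merely because Meier's condition holds. The constant $D$ plays no role in the Dehn function; it belongs to the conjugacy-length theorem. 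The paper's route is entirely different: for each vertex $u$ with $G_u$ infinite, Meier's condition makes $\langle\mathrm{link}(u)\rangle$ a \emph{finite} group, so $\Gamma\mathcal{G}$ splits as $\langle\Gamma\setminus\{u\}\rangle \ast_{\langle\mathrm{link}(u)\rangle} \langle\mathrm{star}(u)\rangle$. Iterating and applying Brick's estimate for amalgamations over finite subgroups reduces to a graph product with only finite vertex-groups still satisfying Meier's condition, which is hyperbolic by Meier's theorem and hence has linear Dehn function. None of this is a dual-curve argument.

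A secondary gap is the $n^2$ lower bound when Meier's condition fails: ``Baumslag--Solitar-type or $F_2\times\mathbb{Z}$-type'' is too vague. The paper does an explicit three-way case split on \emph{which} clause of Meier's condition fails, exhibiting in each case a quasi-isometrically embedded subgroup of the form $G\oplus H$ with both factors infinite ($G_u\oplus G_v$, or $(A\ast C)\oplus(B\ast D)$ from an induced square, or $G_u\oplus(G_v\ast G_w)$ from a non-complete link), and then invokes the quadratic lower bound for such direct sums. You would need to reproduce this case analysis.
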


We need to explain the terminology used in this statement. Given a graph $\Gamma$ and a collection of groups $\mathcal{G}$ indexed by $V(\Gamma)$, we say that $(\Gamma, \mathcal{G})$ satisfies \emph{Meier's condition} if 
\begin{itemize}
	\item $\Gamma$ is square-free;
	\item no two infinite vertex-groups are adjacent;
	\item and the link of a vertex labelled by an infinite vertex-group is complete.
\end{itemize}
Also, for every $G \in \mathcal{G}$, we denote by $\widetilde{\delta_G}$ the Dehn function $\delta_G$ of $G$ if $G$ is adjacent to all the other vertex-groups, and its \emph{negative closure} $\overline{\delta_G}$ otherwise, ie., 
$$\overline{\delta_G} : n \mapsto \max \left\{ \sum\limits_{i=1}^r \delta_G(n_i) \mid r \geq 1, \ \sum\limits_{i=1}^r n_i=n \right\}.$$
Previous works on Dehn functions of graph products include \cite{MeierDehn, CohenGP, AlonsoDehnGP}, where estimates are proved. Here, we identify precisely the (equivalence class of the) Dehn function. However, our argument is fundamentally based on \cite[Theorem~2.4]{MeierDehn} (see Proposition \ref{prop:Dehn} below), which we reprove by using van Kampen diagrams (while Meier uses general results proved by Brick about Dehn functions of amalgamated products).

Thus, so far we have shown how to solve the word problem in graph products by proving the normal form, and next we have proved a quantitative version in this spirit by computing Dehn functions. Now, we would like to do something similar for the conjugacy problem by considering \emph{annular diagrams} over $\mathcal{Q}$ (see Definition \ref{def:annulardiag} for a precise definition). First, given a graph $\Gamma$ and a collection of groups $\mathcal{G}$ indexed by $V(\Gamma)$, say that a word $s_1 \cdots s_n$ written over $\bigcup\limits_{u \in G_u} G_u$ is \emph{graphically cyclically reduced} if it is graphically reduced and if there does not exist $1 \leq i< j \leq n$ such that the vertex-group containing $s_i$ (resp. $s_j$) is adjacent to the vertex-group containing $s_k$ for every $1 \leq k < i$ (resp. for every $j<k \leq n$). It is not difficult to show that any element of a graph product is conjugate to a graphically cyclically reduced word (see Lemma \ref{lem:graphicallycyclicallyreduced}). We prove the following statement, which is well-known for right-angled Artin groups (see \cite[Lemma 9]{ConjRAAGlinear}) and which can also be found in \cite[Lemma 3.12]{FerovConjGP} (where it is proved combinatorially):

\begin{thm}\label{intro:conj}
Let $a,b \in \Gamma \mathcal{G}$ be two graphically cyclically reduced elements. Then $a$ and $b$ are conjugate if and only if there exist graphically reduced words $x_1 \cdots x_r p_1 \cdots p_n$ and $y_1 \cdots y_r q_1 \cdots q_n$ such that
\begin{itemize}
	\item $a=x_1 \cdots x_r p_1 \cdots p_n$ and $b=y_1 \cdots y_r q_1 \cdots q_n$;
	\item the $p_i$'s and the $q_i$'s are floating syllables of $a$ and $b$ respectively, and $p_i$ and $q_{i}$ are conjugate in a vertex-group;
	\item $y_1 \cdots y_s$ can be obtained from $x_1 \cdots x_r$ by permuting two consecutive syllables which belong to adjacent vertex-groups and by performing cyclic permutations. 
\end{itemize}
\end{thm}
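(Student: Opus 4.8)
The plan is to prove the two implications separately. The forward direction is a direct computation, while the converse is where the geometry of annular diagrams does the work.

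For the forward direction, I would exhibit a conjugator explicitly. Since permuting two consecutive syllables lying in adjacent vertex-groups does not change the element, the hypothesis on $x_1\cdots x_r$ and $y_1\cdots y_r$ says that $y_1\cdots y_r=g^{-1}(x_1\cdots x_r)g$ for some $g$, namely a product of the prefixes realising the cyclic permutations used; and $q_i=h_i^{-1}p_ih_i$ with $h_i$ in the vertex-group $G_{u_i}$ containing $p_i$. Because $p_i$ is a floating syllable of $a$, the vertex $u_i$ is adjacent to the vertex containing every other syllable of $a$, so $h_i$ commutes with all the $x_j$'s, all the $p_j$'s, all the $h_j$'s and with $g$. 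Setting $h=h_1\cdots h_n$, a short computation then gives $(gh)^{-1}a(gh)=\big(g^{-1}(x_1\cdots x_r)g\big)\big(h^{-1}(p_1\cdots p_n)h\big)=(y_1\cdots y_r)(q_1\cdots q_n)=b$, so $a$ and $b$ are conjugate.

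For the converse, suppose $a$ and $b$ are conjugate and fix an annular diagram $D$ over $\mathcal{Q}$ whose outer boundary cycle reads $a$, whose inner boundary cycle reads $b$, and which is of minimal complexity (lexicographically minimal for: number of $2$-cells, then number of self- and mutual crossings of dual curves, then length of the conjugating path). The first step is to describe dual curves in $D$. Every dual curve has a well-defined type $u\in V(\Gamma)$, since it only crosses edges labelled by generators of $G_u$ — this holds for every square and every triangle of $\mathcal{Q}$ — and moreover two dual curves can cross only if their types are distinct and adjacent in $\Gamma$ (a square involves two adjacent vertex-groups, and a dual curve entering a triangle fills it entirely). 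By Proposition~\ref{intro:dualcurve}, an embedded circle in a dual curve of $D$ cannot be null-homotopic, hence is isotopic to the core of the annulus; using minimality I would then run the usual surgeries — removing a bigon bounded by a dual curve and itself, by two dual curves, or by a dual curve and a boundary cycle, as in the disc-diagram argument behind Theorem~\ref{intro:normalform} — and, in addition, collapse the product strip of squares surrounding a core-isotopic circle dual curve (its two boundary cycles read the same word, the circle merely recording a conjugation of that word by a single vertex-group element commuting with it). The conclusion is that in the minimal $D$ there is no core-isotopic circle dual curve and every dual curve is a finite tree with leaves at midpoints of boundary edges. Using next that $a$ and $b$ are graphically reduced, I would show that no dual curve has two leaves on the outer boundary cycle (and symmetrically for the inner one): if $\alpha$, of type $u$, had leaves at two syllables $s_i,s_j\in G_u$ of $a$, then the sub-arc of $\alpha$ joining them together with one of the two boundary arcs between $s_i$ and $s_j$ bound a sub-disc $D'$, by graphical reducedness there is a syllable $s_k$ strictly between $s_i$ and $s_j$ whose vertex is not adjacent to $u$, so the dual curve issued from $s_k$ cannot cross $\alpha$, stays inside $D'$, and hence has a second leaf among the syllables strictly between $s_i$ and $s_j$ — contradicting the same statement for the shorter graphically reduced sub-word $s_{i+1}\cdots s_{j-1}$, the base case (two consecutive syllables in a common vertex-group) being already forbidden by graphical reducedness. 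Combined with the surgery step, this forces each dual curve to be either an embedded arc from one syllable of $a$ to one syllable of $b$ of the same type, meeting no triangle — in which case those two syllables are literally the same group element — or a tree meeting at least one triangle, with at most one leaf on $a$, at most one on $b$, and all other leaves on the conjugating path, such a tree recording a conjugation inside one vertex-group between the boundary syllables it meets.

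It then remains to read off the statement: the dual curves set up a type-preserving bijection between the syllables of $a$ and those of $b$, and since dual curves of a common type are disjoint and any two dual curves cross at most once, tracking the cyclic orders in which dual curves meet the two boundary cycles shows that the syllable sequence of $b$ is obtained from that of $a$ by a cyclic permutation followed by transpositions of consecutive syllables lying in adjacent vertex-groups, one transposition per crossing square; graphical cyclic reducedness of $a$ is then used to show that the triangle-meeting (i.e. genuinely conjugating) dual curves can occur only at floating syllables, since conjugating a non-floating syllable within its vertex-group by the conjugating element would let one shorten the cyclic word. This splits $a$ as $x_1\cdots x_r p_1\cdots p_n$, with the $p_i$ the floating syllables met by triangle-meeting dual curves, and $b$ correspondingly as $y_1\cdots y_r q_1\cdots q_n$, with $p_i$ conjugate to $q_i$ in a vertex-group and $y_1\cdots y_r$ obtained from $x_1\cdots x_r$ as described. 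I expect this last step to be the main obstacle — cleanly isolating the floating part from the rigid part and matching the dual-curve bijection with the precise ``permute-and-cyclically-permute'' description, which forces an essential use of graphical cyclic reducedness; by contrast, the reducedness induction and the surgery and collapse arguments are annular analogues of techniques already developed for Theorem~\ref{intro:normalform}.
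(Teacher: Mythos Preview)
Your forward direction is fine and matches the paper. The converse, however, has a genuine structural gap.

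You assert that after collapsing ``core-isotopic circle dual curves'' and running the usual bigon surgeries, every dual curve in the minimal annular diagram is a finite tree. This is not true. Your collapse move applies only to a dual curve that \emph{is} a regular circle (a closed curve with no singularities, bounding a strip of squares). It does not apply to a dual curve that merely \emph{contains} an embedded circle while also having singularities and arcs reaching the boundary. Proposition~\ref{intro:dualcurve} only forbids null-homotopic embedded circles inside a non-circle dual curve; in an annulus it allows core-isotopic ones. The paper's proof makes this explicit: a dual curve meeting $\partial D$ is either a regular arc or what the paper calls \emph{circular} --- a chain of core-isotopic circles, each carrying exactly two singularities, linked by arcs and with one arc to each boundary component. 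These circular dual curves are exactly what encode the conjugation of a floating syllable inside its vertex-group, and they cannot be surgered away, because the outer and inner boundary labels $p_i$ and $q_i$ are genuinely different elements of the same vertex-group.

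Because of this, your mechanism for isolating the floating syllables (``triangle-meeting trees with extra leaves on the conjugating path'') is not well-posed in a genuine annular diagram: there is no conjugating path in $\partial D$ for leaves to land on, and the relevant dual curves are not trees. The paper's route is both different and cleaner: a circular dual curve contains a circle separating $\partial_{\mathrm{inn}}D$ from $\partial_{\mathrm{out}}D$, so it is transverse to \emph{every} dual curve reaching the boundary; by Lemma~\ref{lem:labeltransverse} its outer boundary syllable therefore lies in a vertex-group adjacent to all the others, i.e.\ it is floating. This is the step that replaces your vague ``would let one shorten the cyclic word'' argument.

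A smaller point: in your ``no two leaves on the same boundary component'' induction, when the basepoint lies on the boundary arc between the two leaves you need graphical \emph{cyclic} reducedness, not just graphical reducedness, to produce the intervening syllable whose vertex is non-adjacent (compare the two cases in the paper's Claim~\ref{claim:boundaryhyp}).
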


This statement requires the following definition: given a graphically reduced word $w=s_1 \cdots s_n$, we say that $s_i$ is a \emph{floating syllable} if the vertex-group containing $s_i$ is adjacent to the vertex-group containing $s_j$ for every $j \neq i$.

A consequence of Theorem \ref{intro:conj} is that a graph product of groups with solvable conjugacy problems has a solvable conjugacy problem as well. Although this corollary has been already proved by Green \cite[Theorem 3.24]{GreenGP}, her proof is based on general results about the conjugacy problem in amalgamated products. Theorem \ref{intro:conj} leads to a simpler solution of the conjugacy problem in graph products. (See Example \ref{ex:conj} for an illustration of the method.) 

Next, we focus on \emph{conjugacy length functions}. Given a group $G$ endowed with a finite generating set $S$, the \emph{conjugacy length function} $\mathrm{CLF}_G$ gives, for every $n \geq 1$, the maximal length of a shortest conjugator between two conjugate elements $a,b \in G$ satisfying $\|a\|_S+ \|b\|_S \leq n$. Otherwise saying, for every $n \geq 1$, $\mathrm{CLF}_G(n)$ is the smallest number such that, for every $a,b \in G$ satisfying $\|a\|_S + \|b\|_S \leq n$, there exists some $c \in G$ such that $a=cbc^{-1}$ and $\|c\|_S \leq \mathrm{CLF}_G(n)$. Formally,
$$\mathrm{CLF}_G : n \mapsto \max\limits_{a,b \in G, \ \|a\|_S + \|b \|_S \leq n} \ \min\limits_{c \in G, \ cac^{-1}=b} \|c \|_S.$$
It is worth noticing that, up to equivalence (see Definition \ref{def:equivalentfunctions}), the conjugacy length function of a group does not depend on the finite generating set we choose. Conjugacy length functions are the analogues of Dehn functions for the conjugacy problem. For instance, it is not difficult to show that a finitely generated group with a solvable word problem has a solvable conjugacy problem if and only if its conjugacy length function is recursive. However, contrary to Dehn functions, conjugacy length functions do not provide quasi-isometric invariants. Indeed, in \cite{CPfiniteindex}, a finitely presented group with a solvable conjugacy problem but which contains a finite-index subgroup with an unsolvable conjugacy problem is constructed. 

So far, estimates on conjugacy length functions have been obtained from several classes of groups, included hyperbolic groups \cite[Lemma III.$\Gamma$.29]{MR1744486}, CAT(0) groups \cite[Theorem III.$\Gamma$.1.12]{MR1744486}, mapping class groups \cite{CLFmcg}, free solvable groups \cite{CLFwreathfreesolvable}, wreath products \cite{CLFwreathfreesolvable}, group extensions \cite{CLFextension}, right-angled Artin groups \cite{ConjRAAGlinear}, and very recently cocompact special groups \cite{CLFhierarchy}. Thanks to Theorem \ref{intro:conj}, we are able to add graph products to this list. More precisely, we prove:

\begin{thm}\label{intro:CLF}
Let $\Gamma$ be a finite simplicial graph and $\mathcal{G}$ a collection of finitely generated groups indexed by $V(\Gamma)$. Then
$$\max\limits_{u \in V(\Gamma)} \mathrm{CLF}_{G_u}(n) \leq \mathrm{CLF}_{\Gamma \mathcal{G}}(n) \leq (D+1) \cdot n + \max\limits_{\Delta \subset \Gamma \ \text{complete}} \sum\limits_{u \in V(\Delta)} \mathrm{CLF}_{G_u}(n)$$
for every $n \geq 1$, where $D$ denotes the maximal diameter of a connected component of the opposite graph $\Gamma^{\mathrm{opp}}$. 
\end{thm}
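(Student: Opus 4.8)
The plan is to establish the two inequalities in Theorem~\ref{intro:CLF} separately, relying on the structural description of conjugacy provided by Theorem~\ref{intro:conj}.

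\medskip

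\noindent\textbf{The lower bound.} For the inequality $\max_{u} \mathrm{CLF}_{G_u}(n) \leq \mathrm{CLF}_{\Gamma\mathcal{G}}(n)$, I would fix a vertex $u \in V(\Gamma)$ and show that $G_u$ is an \emph{undistorted retract} of $\Gamma\mathcal{G}$ in a way that is compatible with conjugacy. Concretely, there is a retraction $\rho : \Gamma\mathcal{G} \to G_u$ killing all other vertex-groups, and $G_u \hookrightarrow \Gamma\mathcal{G}$ is an isometric embedding for suitable generating sets (this follows from the normal form, Theorem~\ref{intro:normalform}). Given conjugate elements $a,b \in G_u$ with $\|a\|+\|b\| \leq n$ realizing $\mathrm{CLF}_{G_u}(n)$, if $c \in \Gamma\mathcal{G}$ conjugates them, then $\rho(c) \in G_u$ also conjugates them (since $\rho$ is a homomorphism fixing $G_u$ pointwise) and $\|\rho(c)\|_{G_u} \leq \|c\|_{\Gamma\mathcal{G}}$ because $\rho$ is $1$-Lipschitz. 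Hence the shortest conjugator in $\Gamma\mathcal{G}$ is at least as long as the shortest in $G_u$, giving the bound. Care is needed only to pick generating sets making all these maps non-expanding; this is routine.

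\medskip

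\noindent\textbf{The upper bound.} This is the substantive part. Start with arbitrary conjugate $a,b \in \Gamma\mathcal{G}$ with $\|a\|_S + \|b\|_S \leq n$. First replace $a$ and $b$ by graphically cyclically reduced representatives $a'$ and $b'$: by Lemma~\ref{lem:graphicallycyclicallyreduced} this is possible, and the conjugators needed for $a \leadsto a'$ and $b \leadsto b'$ have length $O(\|a\|_S)$ and $O(\|b\|_S)$ respectively (cyclically reducing a word of syllable-length $\ell$ costs a conjugator whose length is bounded by the original word length, since each cyclic reduction peels off a prefix). So it suffices to bound the conjugator between two graphically cyclically reduced elements $a',b'$ with word-length $O(n)$. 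Now invoke Theorem~\ref{intro:conj}: write $a' = x_1\cdots x_r p_1 \cdots p_n$ and $b' = y_1 \cdots y_r q_1 \cdots q_n$ as there. The conjugation is achieved in three layers: (i) conjugating each floating syllable $p_i$ to $q_i$ inside its vertex-group $G_{u_i}$, which costs a conjugator of length at most $\mathrm{CLF}_{G_{u_i}}(\|p_i\|+\|q_i\|)$; since distinct floating syllables pairwise commute (their vertex-groups are mutually adjacent and hence span a complete subgraph $\Delta$), these conjugators can be multiplied together, contributing at most $\sum_{u \in V(\Delta)} \mathrm{CLF}_{G_u}(n)$, which is bounded by $\max_{\Delta \subset \Gamma \text{ complete}} \sum_{u\in V(\Delta)} \mathrm{CLF}_{G_u}(n)$; (ii) the cyclic permutations turning $y_1 \cdots y_r$-part into a cyclic rotation of $x_1 \cdots x_r$-part, each of which is conjugation by a prefix of length $\leq \|a'\| = O(n)$; (iii) the syllable swaps by adjacent vertex-groups, which are \emph{free} (they change the word but not the group element, costing no conjugator). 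The delicate point is bounding the total contribution of steps (ii) and the reorganization: a priori one could need many cyclic permutations. This is exactly where the quantity $D$ — the maximal diameter of a connected component of $\Gamma^{\mathrm{opp}}$ — enters. I would argue that the non-floating part of $a'$ can be organized so that reaching the matching cyclic rotation of $b'$ requires conjugating by an element which, on each connected component of $\Gamma^{\mathrm{opp}}$ supporting syllables, is a product of at most $D$ ``shifts,'' each of length $O(n)$; more precisely, the support of the $x_i$'s (resp.\ $y_i$'s) that do not pairwise commute lies in the non-adjacent-ish part, controlled by $\Gamma^{\mathrm{opp}}$, and a component of diameter $d$ forces at most $d$ rounds of cyclic rotation, each of cost $\leq n$. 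Summing over the (pairwise commuting, hence parallelizable across components) contributions gives the $(D+1)\cdot n$ term.

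\medskip

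\noindent\textbf{Main obstacle.} The crux is the careful bookkeeping in the upper bound: verifying that the three moves of Theorem~\ref{intro:conj} can be realized by an \emph{explicit} conjugator whose length obeys the stated bound, in particular that the cyclic-permutation move contributes only $(D+1)\cdot n$ rather than something quadratic. This requires showing that the number of cyclic rotations needed is governed by distances in $\Gamma^{\mathrm{opp}}$ — intuitively, two syllables can be ``swept past each other'' cyclically only when they lie in a common connected component of $\Gamma^{\mathrm{opp}}$, and the component's diameter bounds how far any syllable must travel. I expect this to be proved by an induction on the number of syllables (or on $D$), peeling off one ``layer'' of the $\Gamma^{\mathrm{opp}}$-component at a time and tracking how the conjugator accumulates, together with the observation that contributions coming from distinct complete subgraphs commute and so do not add up but rather are absorbed into a single $\max$.
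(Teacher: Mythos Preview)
Your overall architecture matches the paper's proof almost exactly: the lower bound via the retraction $\Gamma\mathcal{G}\twoheadrightarrow G_u$ is precisely what the paper does (stated there as a general claim about retract subgroups), and for the upper bound the paper also first passes to graphically cyclically reduced representatives with conjugators of length at most $\|a\|/2$ and $\|b\|/2$, then splits the conjugator into a ``floating'' piece $c_1\cdots c_n$ (contributing the $\sum_{u\in\Delta}\mathrm{CLF}_{G_u}$ term over a complete subgraph) and a ``cyclic'' piece $d$ handling the $x$-word, exactly as you outline.

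The one place where your proposal is genuinely incomplete is the step you yourself flag as the main obstacle: bounding the conjugator coming from cyclic permutations by $D\cdot n$. The paper does \emph{not} prove this by the kind of induction you sketch; instead it invokes an external result from the theory of trace monoids (the main theorem of Duboc, \emph{Commutations dans les mono\"ides libres}), which says precisely that if two words in a partially commutative monoid are cyclically conjugate, then one can be obtained from the other using at most $D$ cyclic rotations interleaved with commutation swaps, where $D$ is the maximal diameter of a component of the non-commutation graph. This is a nontrivial combinatorial fact, and your proposed ``peel off one layer of the $\Gamma^{\mathrm{opp}}$-component'' induction is not obviously equivalent to it; in particular, syllables from the same component can be interleaved in complicated ways, and it is not clear that a naive induction on $D$ or on the number of syllables gives the linear bound rather than something quadratic. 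So while your plan is correct in spirit and correctly isolates the crux, you would need either to rediscover Duboc's argument or to cite it to close the gap.
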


Given a graph $\Gamma$, the \emph{opposite graph} $\Gamma^{\mathrm{opp}}$ is the graph whose vertex-set is $V(\Gamma)$ and whose edges link two vertices if they are not adjacent in $\Gamma$. As a consequence of the previous theorem, a graph product of groups with linear conjugacy length functions has a linear conjugacy length function as well. This observation encompasses right-angled Artin groups and right-angled Coxeter groups (see Corollary \ref{cor:CLFraag} for a more precise estimate). 

\medskip
As a final remark, we have to mention that a simple proof of the normal form of graph products can also be found in \cite[Section 4]{HsuWiseGP}. Also, the notion of dual curves we use in this article have been introduced recently and independently in \cite{AcylHypGP}. In particular, Proposition \ref{intro:dualcurve} can be thought of as a generalisation of \cite[Lemma~2.3]{AcylHypGP}.

\paragraph{Some problems.} Let us conclude this introduction by mentioning a few interesting problems which could be solved thanks to the formalism introduced in this paper. The first of these problems is the following:

\begin{problem}
Describe centralisers in graph products of groups.
\end{problem}

A solution already appear in \cite{CentralisersGP}, but the proof is based on long combinatorial arguments related to normal forms. It would be interesting to have a simpler and geometric proof. Our second problem is:

\begin{problem}
For every $n \geq 2$, how to determine whether or not an element of a graph product is an $n$th power?
\end{problem}

The expected answer is the following. Up to conjugacy, we may suppose that the word $w$ we are looking at is graphically cyclically reduced. Write such a word as $x_1 \cdots x_r p_1 \cdots p_s$ where $p_1, \ldots, p_s$ are all the floating syllables of $w$. Then $w$ is an $n$th power if and only if each $p_i$ is an $n$th power in the vertex-group which contains it and $x_1 \cdots x_r$ can be obtained from a word of the form $(x_{i_1} \cdots x_{i_k})^n$ by permuting consecutive syllables if they belong to adjacent vertex-groups. 

A solution to this problem would be the starting point towards a solution of the third and last problem we propose:

\begin{problem}
Let $\Gamma$ be a graph and $\mathcal{G}$ a collection of groups indexed by $V(\Gamma)$. Suppose that $\Gamma\mathcal{G}$ contains a subgroup $H$ isomorphic to $\langle a,b,c \mid a^m=b^nc^p \rangle$ for some $m,n,p \geq 2$. Does there exist a complete subgraph $\Lambda \subset \Gamma$ such that $H$ is included into the subgroup $\langle \Lambda \rangle$ generated by the vertex-groups labelling the vertices of $\Lambda$?
\end{problem}

\noindent
A positive answer can be found in \cite[Corollary 1.7]{TitsGP} for right-angled Artin groups.

\paragraph{Organisation of the paper.} In Section \ref{section:def}, we begin by giving all the definitions about graph products of groups and diagrams over group presentations which will be used in the rest of the article. Next, we prove Proposition \ref{intro:dualcurve} about dual curves and Theorem \ref{intro:normalform} about the normal form of graph products in Section \ref{section:normalform}. We also discuss a few consequences related to the word problem. Section \ref{section:Dehn} is dedicated to the proof of Theorem \ref{intro:Dehn}. In Section \ref{section:conj}, we prove Theorem \ref{intro:conj} and we discuss a few consequences related to the conjugacy problem. Finally, Section \ref{section:CLF} is dedicated to conjugacy length functions. There, we prove Theorem \ref{intro:CLF} and we show that it is possible to get a linear control on the sizes of shortest conjugators between specific elements of graph products without any assumption on vertex-groups.

\section{Graph products and diagrams}\label{section:def}

\noindent
In this section, we specify the definitions and notations related to graph products and diagrams which we will be used in the rest of the article. We begin by defining graph products of groups.

\paragraph{Graph products of groups.} Let $\Gamma$ be a simplicial graph and $\mathcal{G}= \{ G_u \mid u \in V(\Gamma)\}$ a collection of groups, called \emph{vertex-groups}, indexed by the vertex-set $V(\Gamma)$ of $\Gamma$. The \emph{graph product} $\Gamma \mathcal{G}$ is defined by the quotient
$$ \left( \underset{u \in V(\Gamma)}{\ast} G_u \right) / \langle \langle [g,h]=1 \ \text{if $g \in G_u$ and $h \in G_v$ for some $\{u,v\} \in E(\Gamma)$} \rangle \rangle$$
of the free product of vertex-groups, where $E(\Gamma)$ denotes the edge-set of $\Gamma$. 

\medskip \noindent
For instance, if $\Gamma$ has no edges then $\Gamma \mathcal{G}$ is the free product of $\mathcal{G}$; and if $\Gamma$ is a complete graph then $\Gamma \mathcal{G}$ is the direct sum of $\mathcal{G}$. 

\medskip \noindent
\textbf{Convention:} In all the article, vertex-groups are always assumed to be non-trivial. 

\medskip \noindent
Notice that this assumption is not restrictive. Indeed, if $\Phi$ denotes the subgraph of $\Gamma$ generated by the vertices which are labelled by non-trivial vertex-groups and if $\mathcal{H}$ denotes the collection of groups obtained from $\mathcal{G}$ by removing the trivial vertex-groups, then $\Gamma \mathcal{G}$ is isomorphic to the new graph product $\Phi \mathcal{H}$.

\paragraph{Diagrams.} Now, we want to consider diagrams over group presentations. The reference we essentially follow is \cite{LS}. When reading the definitions below, we refer to Figure \ref{diag} for examples; there, $\partial \Delta_2$ is labelled by $(a^{-1}a^{2}a^{-1}, a^3a^{-3})$, and $\partial \Delta_3$ by $b^{-1}a^{-1}ba^4b^{-1}a^{-2}ba^{-2}$. 
\begin{figure}
\begin{center}
\includegraphics[trim={0 6.5cm 10cm 0},clip,scale=0.34]{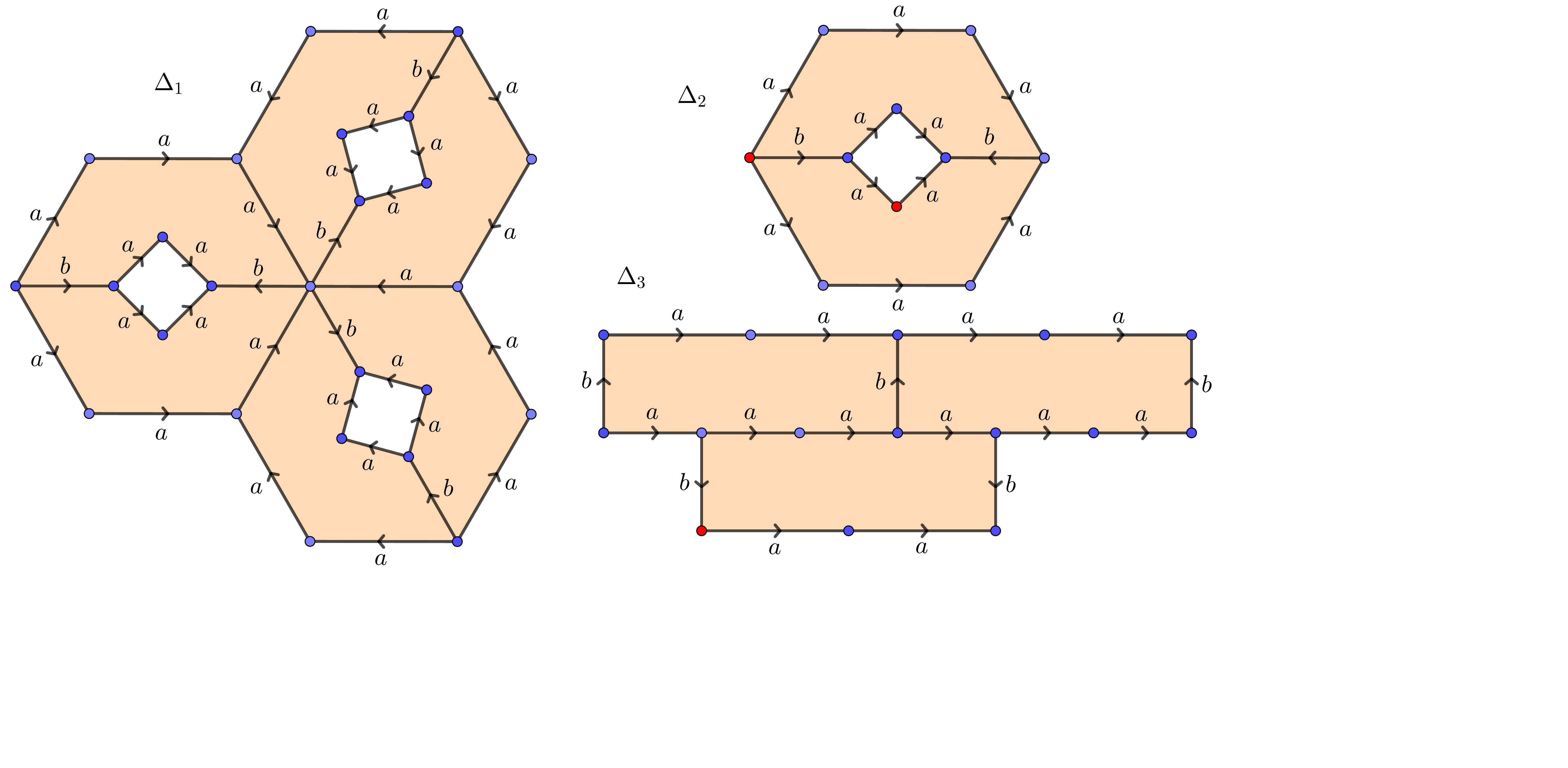}
\caption{A general diagram, a van Kampen diagram, and an annular diagram over the presentation $\langle a,b \mid ba^2b^{-1}=a^3 \rangle$.}
\label{diag}
\end{center}
\end{figure}

\begin{definition}
Let $\mathcal{P}= \langle X \mid R \rangle$ be a group presentation, and $D$ a finite $2$-complex embedded into the plane whose edges are oriented and labelled by elements of $X$. An oriented path $\gamma$ in the one-skeleton of $D$ can be written as a concatenation $e_1^{\epsilon_1} \cdots e_n^{\epsilon_n}$, where $\epsilon_1, \ldots, \epsilon_n \in \{+1,-1\}$ and where each $e_i$ is an edge endowed with the orientation coming from $D$. Then the word \emph{labelling} $\gamma$ is $\ell_1^{\epsilon_1} \cdots \ell_n^{\epsilon_n}$ where $\ell_i$ is the label of $e_i$ for every $1 \leq i \leq n$. If, for every $2$-cell $F$ of $D$, the word labelling the boundary of $F$ (an arbitrary basepoint and an arbitrary orientation being fixed) is a cyclic permutation of a relation of $R$ or of the inverse of a relation of $R$, then $D$ is a \emph{diagram over $\mathcal{P}$}.
\end{definition}

\noindent
We will be interested in two specific types of diagrams, namely van Kampen diagrams and annular diagrams, which are respectively related to the word and the conjugacy problems.

\begin{definition}
Let $\mathcal{P}= \langle X \mid R \rangle$ be a group presentation. A \emph{van Kampen diagram over $\mathcal{P}$} is a simply connected diagram over $\mathcal{P}$ with a fixed vertex in its boundary (ie., the intersection between $D$ and the closure of $\mathbb{R}^2 \backslash D$). A \emph{boundary cycle} of $D$ is a cycle $\alpha$ of minimal length which contains all the edges in the boundary of $D$ which does not cross itself, in the sense that, if $e$ and $e'$ are consecutive edges of $\alpha$ with $e$ ending at a vertex $v$, then $e^{-1}$ and $e'$ are adjacent in the cyclically ordered set of all edges of $D$ beginning at $v$. The \emph{label} of the boundary of $D$ is the word labelling the boundary cycle of $D$ which begins at the basepoint of $D$ and which turns around $D$ clockwise. 
\end{definition}

\noindent
The connection between van Kampen diagrams and the word problem is made explicit by the following statement. We refer to \cite[Theorem V.1.1 and Lemma V.1.2]{LS} for a proof.

\begin{prop}
Let $\mathcal{P}= \langle X \mid R \rangle$ be a presentation of a group $G$ and $w \in X^{\pm}$ a non-empty word. There exists a van Kampen diagram over $\mathcal{P}$ whose boundary is labelled by $w$ if and only if $w=1$ in $G$.
\end{prop}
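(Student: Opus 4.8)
The plan is to prove the two implications separately, both by induction, exploiting the correspondence between the combinatorics of a diagram and products of conjugates of relators in the free group $F(X)$ on $X$. Recall that $w=1$ in $G$ means precisely that $w$ lies in the normal closure $\langle\langle R \rangle\rangle$ inside $F(X)$, i.e. that $w =_{F(X)} \prod_{i=1}^k g_i r_i^{\epsilon_i} g_i^{-1}$ for some $g_i \in F(X)$, $r_i \in R$ and $\epsilon_i \in \{\pm 1\}$. The whole point is to translate such an algebraic certificate into a planar picture, and conversely.

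For the implication ``diagram $\Rightarrow$ trivial'', I would argue by induction on the number $k$ of $2$-cells of a van Kampen diagram $D$ with boundary labelled by $w$. If $k=0$ then $D$ is a simply connected $1$-complex, hence a tree, and its boundary cycle traverses each edge once in each direction; thus $w$ freely reduces to the empty word and $w=_{F(X)} 1$, a fortiori $w=1$ in $G$. For the inductive step, I would locate a $2$-cell $F$ whose boundary shares at least one edge with the boundary cycle of $D$ (existence of such a boundary cell when $k \geq 1$ is standard), write the boundary label as $w =_{F(X)} u \cdot s \cdot v$ where $s$ is the arc of $\partial D$ running along $F$, and observe that the boundary word of $F$ is $s \cdot t^{-1}$ with $t$ the complementary interior arc, this word being a cyclic conjugate of some $r^{\pm 1}$. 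Deleting $F$ produces a van Kampen diagram $D'$ with $k-1$ cells and boundary label $u \cdot t \cdot v$, so by induction $utv \in \langle\langle R \rangle\rangle$; since $st^{-1}$ is conjugate in $F(X)$ to $r^{\pm 1}$, we obtain $w =_{F(X)} u(st^{-1})(tv) = \left( u(st^{-1})u^{-1} \right) \cdot (utv) \in \langle\langle R \rangle\rangle$, whence $w=1$ in $G$.

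For the converse, I would start from a certificate $w =_{F(X)} \prod_{i=1}^k g_i r_i^{\epsilon_i} g_i^{-1}$ and build the diagram explicitly. For each $i$ I would form a \emph{lollipop}: a segment (the stalk) whose edges read $g_i$, attached at its far end to a single $2$-cell (the petal) whose boundary reads the relation $r_i^{\epsilon_i}$. Wedging the $k$ lollipops at a common basepoint yields a planar, simply connected $2$-complex whose boundary cycle, read clockwise from the basepoint, spells exactly $\prod_i g_i r_i^{\epsilon_i} g_i^{-1}$. This word is freely equal to $w$ but need not be reduced, so I would then repeatedly remove spurs: whenever the boundary label contains a cancelling pair $xx^{-1}$, the corresponding boundary edges form a spur to be folded together, and collapsing it shortens the boundary word by a free reduction while keeping the diagram planar and simply connected. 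After finitely many folds the boundary reads the freely reduced form of $w$, and if $w$ itself is not reduced one appends tree-like spurs to recover $w$ verbatim, respecting the basepoint and the clockwise convention.

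The routine direction is ``$\Rightarrow$''; the delicate points, which I expect to be the main obstacle, lie in the converse construction. First, one must check that the bouquet of lollipops embeds in the plane and is simply connected, and that the folding operations never destroy planarity nor create a $2$-cell whose boundary stops being a cyclic conjugate of a relator --- this requires a careful definition of ``spur'' and the observation that folding identifies only edges carrying matching labels. Second, some bookkeeping is needed to pass between $w$ and its free reduction and to honour the fixed basepoint and orientation required by the definition of a van Kampen diagram.
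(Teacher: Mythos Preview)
Your argument is correct and is precisely the classical proof of the van Kampen lemma. Note that the paper does not actually prove this proposition: it simply refers the reader to \cite[Theorem~V.1.1 and Lemma~V.1.2]{LS}, and what you have written is essentially the argument found there. The ``$\Rightarrow$'' direction by induction on the number of $2$-cells and the ``$\Leftarrow$'' direction via a bouquet of lollipops followed by folding are exactly the standard steps; the technical caveats you flag (planarity preserved under folding of adjacent boundary edges, handling of the basepoint, passing between $w$ and its free reduction) are real but well known and are treated in the cited reference.
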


\noindent
Next, let us consider annular diagrams.

\begin{definition}\label{def:annulardiag}
Let $\mathcal{P}= \langle X \mid R \rangle$ be a group presentation. An \emph{annular diagrams over $\mathcal{P}$} is a diagram $D$ over $\mathcal{P}$ such that $\mathbb{R}^2 \backslash D$ has exactly two connected components, endowed with a fixed vertex in each connected component of its boundary (ie., the  intersection between $D$ and the closure of $\mathbb{R}^2 \backslash D$). The \emph{inner boundary} (resp. \emph{outer boundary}) of $D$, denoted by $\partial_\text{inn}D$ (resp. $\partial_\text{out}D$), is the intersection of $D$ with the bounded (resp. unbounded) component of $\mathbb{R}^2 \backslash D$. A cycle of minimal length (that does not cross itself) which contains all the edges in the outer (resp. inner) boundary of $D$ is an \emph{outer} (resp. \emph{inner}) \emph{boundary cycle} of $D$. The \emph{label} of the boundary of $D$ is the couple $(w_1,w_2)$ where $w_1$ (resp. $w_2$) is the word labelling the inner (resp. outer) boundary cycle of $D$ which begins at the basepoint of $D$ and which turns clockwise.
\end{definition}

\noindent
The connection between annular diagrams and the conjugacy problem is made explicit by the following statement. We refer to \cite[Lemmas V.5.1 and V.5.2]{LS} for a proof.

\begin{prop}
Let $\mathcal{P}= \langle X \mid R \rangle$ be a presentation of a group $G$ and $w_1,w_2 \in X^{\pm}$ two non-empty words. There exists an annular diagrams over $\mathcal{P}$ whose boundary is labelled by $(w_1,w_2)$ if and only if $w_1$ and $w_2$ are conjugate in $G$.
\end{prop}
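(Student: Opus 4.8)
The plan is to deduce both implications from the already-established van Kampen lemma (the preceding proposition) by passing between disc diagrams and annular diagrams through elementary cut-and-paste operations on planar $2$-complexes. The key observation, which I would record first, is that neither cutting a diagram along an embedded arc in its $1$-skeleton nor gluing two boundary arcs carrying the same label destroys the property of being a diagram over $\mathcal{P}$: both operations leave every $2$-cell untouched, so the word labelling the boundary of each face remains a cyclic permutation of a relation of $R$ or of its inverse.

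For the implication ``conjugate $\Rightarrow$ annular diagram'', I would start from a word $u \in X^{\pm}$ with $w_1 = u w_2 u^{-1}$ in $G$, so that $w_1 u w_2^{-1} u^{-1}$ represents the trivial element. By the van Kampen lemma there is a van Kampen diagram $\Delta$ over $\mathcal{P}$ whose boundary cycle reads $w_1 u w_2^{-1} u^{-1}$; its boundary is thus divided into four consecutive arcs labelled $w_1$, $u$, $w_2^{-1}$ and $u^{-1}$. The arc labelled $u$ and the arc labelled $u^{-1}$ are traversed in opposite directions along the boundary but spell the same word, so their edges can be identified in pairs respecting labels and orientations. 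This is exactly the identification turning a rectangle into a cylinder, so the quotient is a planar annulus whose two boundary circles are the surviving arcs labelled $w_1$ and $w_2$. One then checks that the resulting complex embeds in the plane with complement having two components, hence is an annular diagram over $\mathcal{P}$, and that, after choosing the images of the endpoints of the $u$-arc as basepoints and fixing orientations, its boundary label is precisely $(w_1, w_2)$.

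For the converse ``annular diagram $\Rightarrow$ conjugate'', I would reverse the construction. Given an annular diagram $D$ with boundary label $(w_1, w_2)$, choose a simple path $\sigma$ in the $1$-skeleton of $D$ joining the inner basepoint to the outer basepoint, and cut $D$ open along $\sigma$. Since $D$ is an annulus, the result is a disc, i.e. a simply connected planar $2$-complex, which by the remark above is a van Kampen diagram over $\mathcal{P}$. Reading its boundary cycle, one passes along $w_1$, then along one copy of $\sigma$, then along $w_2^{-1}$ (or $w_2$, depending on conventions), then back along the other copy of $\sigma$; if $u$ denotes the label of $\sigma$, the boundary word is a cyclic permutation of $w_1 u w_2^{-1} u^{-1}$. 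The van Kampen lemma then forces $w_1 u w_2^{-1} u^{-1} = 1$ in $G$, that is $w_1 = u w_2 u^{-1}$, so $w_1$ and $w_2$ are conjugate.

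The routine part here is the algebra; the main obstacle is the topological bookkeeping. I would need to verify carefully that the gluing in the first direction genuinely yields a complex embeddable in the plane with exactly two complementary regions (so that it satisfies the definition of an annular diagram, not merely that of an abstract annulus), and, in both directions, that the orientations and the placement of the two basepoints are chosen so that the boundary cycles, read clockwise, spell $w_1$ and $w_2$ in the required order rather than their inverses. Handling the case where $\sigma$ does not meet the prescribed basepoints, or where boundary edges are shared, may require small adjustments such as traversing part of a boundary cycle first; but such adjustments only alter $w_1$ or $w_2$ by a cyclic permutation, and so do not affect the conjugacy conclusion.
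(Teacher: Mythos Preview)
Your argument is correct and is precisely the classical cut-and-glue proof: build a van Kampen diagram for $w_1uw_2^{-1}u^{-1}$ and identify the two $u$-arcs to get an annulus, and conversely cut an annular diagram along a simple path in its $1$-skeleton joining the two basepoints to recover a disc diagram. The paper does not give its own proof of this proposition at all; it simply cites \cite[Lemmas V.5.1 and V.5.2]{LS}, and what you have written is exactly the argument one finds there, so your approach coincides with the one the paper defers to.
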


\noindent
Now, we want to focus on diagrams over presentations of graph products of groups.

\paragraph{Diagrams of graph products.} Fix a simplicial graph $\Gamma$ and a collection of groups $\mathcal{G}$ indexed by $V(\Gamma)$. We will be interested in diagrams over the following presentation of the graph product $\Gamma \mathcal{G}$: 
$$\mathcal{Q}= \left\langle x_g, \ g \in \bigcup\limits_{u \in V(\Gamma)} G_u \backslash \{1\} \left| \begin{array}{l} [x_g,x_h]=1 \ \text{if $g \in G_u$ and $h \in G_v$ where $\{u,v\} \in E(\Gamma)$} \\ x_gx_h=x_{gh} \ \text{if $g,h,gh \in G_u \backslash \{1\}$ for some $u \in V(\Gamma)$} \end{array} \right. \right\rangle$$
We emphasize that the neutral element is not a generator in $\mathcal{Q}$. Consequently, an edge of a diagram over $\mathcal{Q}$ cannot be labelled by a trivial element of $\Gamma \mathcal{G}$. Also, notice that $2$-cells of diagrams over $\mathcal{Q}$ are squares and triangles.

\medskip \noindent
The fundamental tool we will use to study diagrams of graph products is the notion of dual curves. Notice that the definition below only depends on the underlying $2$-complex of a diagram (forgetting the orientations of the edges and their labels), so it may be convenient to think of diagrams as $2$-complexes when dealing with dual curves. 

\begin{definition}
Let $D$ be a diagram over $\mathcal{Q}$. A \emph{dual curve} of $D$ is a minimal subset $\alpha \subset D$ satisfying the following three conditions:
\begin{itemize}
	\item for every edge $e \subset D$, the intersection $\alpha \cap D$ is either empty or the midpoint of $e$;
	\item for every square $C \subset D$, if $\alpha$ contains the midpoint of an edge of $C$ then it contains the straight line which links it to the midpoint of the opposite edge;
	\item for every triangle $T \subset D$, if $\alpha$ contains the midpoint of an edge of $T$ then it contains the three straight lines which link the center of $T$ to the midpoints of all its edges.
\end{itemize}
A \emph{singularity} of a dual curve is the center of a triangle which belongs to it. A dual curve is \emph{regular} if it does not contain any singularity. An \emph{arc} of a dual curve is a connected component of the complement of the singularities. Two dual curves are \emph{transverse} is their intersection contains the center of a square.
\end{definition}

\noindent
In Figure \ref{dualcurve}, the orange dual curve is regular; the orange, green and red dual curves are pairwise transverse; and the blue and orange dual curves are not transverse. 
\begin{figure}
\begin{center}
\includegraphics[trim={0 14cm 41cm 0},clip,scale=0.4]{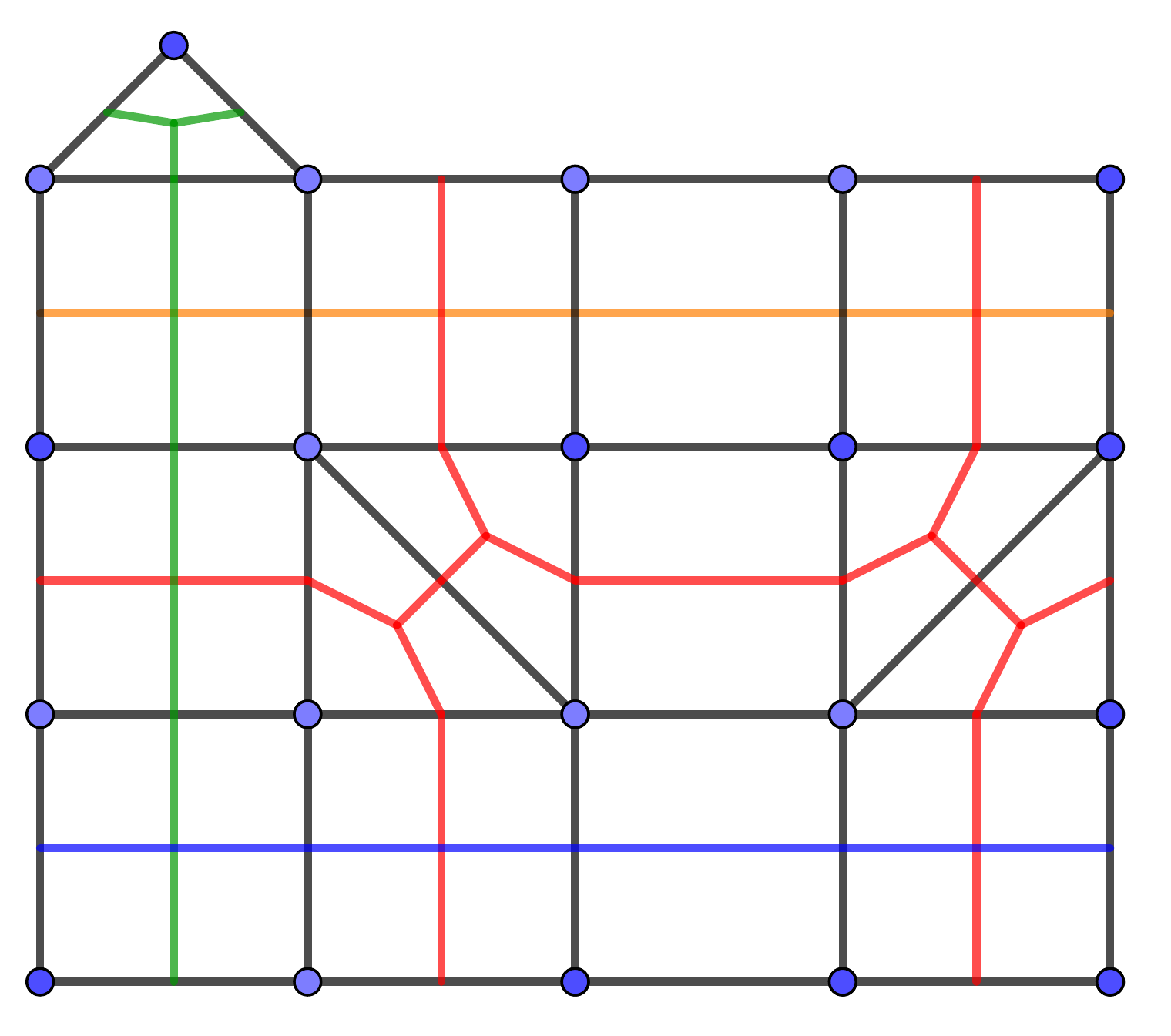}
\caption{Some dual curves of a diagram.}
\label{dualcurve}
\end{center}
\end{figure}

\medskip \noindent
Now, let us collect a few elementary observations related to dual curves in diagrams. Before stating our next lemma, notice that the edges of diagrams over $\mathcal{Q}$ are naturally labelled by vertices of $\Gamma$, since they are labelled by elements of vertex-groups.

\begin{lemma}\label{lem:labelhyp}
Let $\Gamma$ be a simplicial graph, $\mathcal{G}$ a collection of groups indexed by $V(\Gamma)$, and $D$ diagram over $\mathcal{Q}$. If two edges of $D$ are crossed by the same dual curve, then they are labelled by the same vertex of $\Gamma$.
\end{lemma}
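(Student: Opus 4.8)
The plan is to trace a dual curve through the diagram and observe that the label of an edge, up to vertex-group membership, is preserved each time the curve passes through a $2$-cell. Since a dual curve is connected (it is a minimal subset closed under the extension rules), any two edges it crosses are joined by a chain of $2$-cells, so it suffices to prove the claim locally: if a dual curve $\alpha$ crosses two edges $e$ and $e'$ of the same $2$-cell $C$, then $e$ and $e'$ carry the same vertex-label.

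First I would recall that the $2$-cells of a diagram over $\mathcal{Q}$ are of exactly two kinds, coming from the two families of relators. A square $C$ has boundary labelled by a commutation relator $[x_g,x_h]$ with $g \in G_u$, $h \in G_v$ and $\{u,v\} \in E(\Gamma)$; so opposite edges of $C$ are labelled (up to inversion) by $x_g$ and $x_h$ respectively, hence the two pairs of opposite edges carry the vertex-labels $u$ and $v$. By definition of a dual curve, when $\alpha$ enters a square it links the midpoint of an edge to the midpoint of the \emph{opposite} edge; thus the two edges of $C$ that $\alpha$ crosses are opposite, and therefore carry the same vertex-label (either both $u$ or both $v$). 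A triangle $T$ has boundary labelled by a relator $x_gx_h=x_{gh}$ with $g,h,gh \in G_u\setminus\{1\}$; so all three edges of $T$ are labelled by elements of the single vertex-group $G_u$, i.e.\ all three carry the vertex-label $u$. By definition, when $\alpha$ meets a triangle it contains all three segments from the center to the midpoints of the edges, so it crosses all three edges of $T$; and these all carry the label $u$. In either case, any two edges of a single $2$-cell crossed by $\alpha$ share their vertex-label.

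To conclude, I would argue that if $e$ and $e'$ are any two edges of $D$ crossed by the same dual curve $\alpha$, then by connectedness of $\alpha$ (which is a connected subset of $D$, being the closure of a union of straight segments that is built up inductively and ``minimally'') there is a finite sequence of edges $e=e_0, e_1, \dots, e_k=e'$ all crossed by $\alpha$, where for each $i$ the edges $e_i$ and $e_{i+1}$ lie on the boundary of a common $2$-cell that $\alpha$ passes through. Applying the local statement to each consecutive pair shows the vertex-label is constant along the sequence, hence $e$ and $e'$ have the same vertex-label.

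The only delicate point is the combinatorial bookkeeping of the ``connectedness via a chain of $2$-cells'' step: one must be careful that a dual curve can branch at singularities (centers of triangles), so $\alpha$ is in general a tree-like or circle-like $1$-complex rather than a simple arc, and one should phrase the chain argument so that it works through branch points as well as through regular crossings. This is straightforward once one notes that at a singularity the three arcs all cross edges of the same triangle, which by the triangle case all share a vertex-label; so passing through a branch point preserves the label just as passing through a square does. With that observation in place the argument is complete.
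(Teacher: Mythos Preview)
Your proof is correct and follows essentially the same approach as the paper: both reduce to a chain of edges where consecutive pairs lie in a common triangle or are opposite in a common square, and then verify the local claim from the form of the relators in $\mathcal{Q}$. Your version is more detailed (in particular your treatment of branching at singularities), but the underlying argument is identical.
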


\begin{proof}
If $e$ and $f$ are two edges of $D$ crossed by the same dual curve, then there exists a sequence of edges $e_0=e, e_1,\ldots, e_{n-1}, e_n=f$ such that, for every $0 \leq i \leq n-1$, the edges $e_i$ and $e_{i+1}$ either belong to a common triangle or are parallel in a common square. Therefore, in order to prove our lemma, it is sufficient to notice that two edges which belong to a common triangle or which are parallel in a common square are labelled by the same vertex of $\Gamma$. But this observation is clear from the definition of the relations of $\mathcal{Q}$. 
\end{proof}

\noindent
Lemma \ref{lem:labelhyp} above justifies the following definition:

\begin{definition}
Let $\Gamma$ be a simplicial graph, $\mathcal{G}$ a collection of groups indexed by $V(\Gamma)$, and $D$ a diagram over $\mathcal{Q}$. The \emph{label} of a dual curve is the vertex of $\Gamma$ labelling all the edges it crosses. 
\end{definition}

\noindent
Another elementary observation on dual curves of diagrams is the following:

\begin{lemma}\label{lem:labeltransverse}
Let $\Gamma$ be a simplicial graph, $\mathcal{G}$ a collection of groups indexed by $V(\Gamma)$, and $D$ a diagram over $\mathcal{Q}$. If two dual curves of $D$ are transverse, then they are labelled by two adjacent vertices of $\Gamma$.
\end{lemma}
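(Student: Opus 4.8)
The plan is to argue, exactly as in the proof of Lemma \ref{lem:labelhyp}, by reducing the claim about two transverse dual curves to a purely local statement about a single square. Suppose $\alpha$ and $\beta$ are transverse dual curves of $D$, meeting at the center of a square $C$. Then $\alpha$ crosses a pair of opposite edges $e, e'$ of $C$ and $\beta$ crosses the complementary pair of opposite edges $f, f'$ of $C$. By definition the label of $\alpha$ is the vertex of $\Gamma$ labelling $e$ (equivalently $e'$, by Lemma \ref{lem:labelhyp}), and the label of $\beta$ is the vertex labelling $f$; so it suffices to show that in any square of a diagram over $\mathcal{Q}$, two edges which are \emph{not} parallel are labelled by adjacent vertices of $\Gamma$.

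The key step is then the local analysis of a square $2$-cell. A square in a diagram over $\mathcal{Q}$ has boundary word a cyclic permutation of a relator of the form $[x_g, x_h] = x_g x_h x_g^{-1} x_h^{-1}$ (or the inverse of such), where $g \in G_u$, $h \in G_v$ for some edge $\{u,v\} \in E(\Gamma)$. Reading the boundary cycle, the four edges of $C$ are labelled, in cyclic order, by $g, h, g, h$ (up to orientation), so the two edges labelled $g$ are opposite (parallel) and the two edges labelled $h$ are opposite (parallel). Hence any two non-parallel edges of $C$ carry labels $g$ and $h$ with $g \in G_u$, $h \in G_v$ and $\{u,v\} \in E(\Gamma)$; that is, their labelling vertices $u$ and $v$ are adjacent in $\Gamma$. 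Applying this to the pair $(e, f)$ gives that the label of $\alpha$ and the label of $\beta$ are adjacent vertices, which is what we want.

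The only point that requires a little care — and the one I would expect to be the main (mild) obstacle — is checking that the two edges crossed by $\alpha$ inside $C$ really are the opposite pair carrying a common label, and likewise for $\beta$, so that $\alpha$ and $\beta$ genuinely pick out the two distinct vertices $u$ and $v$ rather than accidentally the same one. This is immediate from the second bullet in the definition of a dual curve: inside a square, a dual curve containing the midpoint of one edge contains precisely the straight segment to the midpoint of the \emph{opposite} edge, so $\{e, e'\}$ and $\{f, f'\}$ are indeed the two parallelism classes of edges of $C$, and they are distinct because $\alpha$ and $\beta$ meet only at the center. One should also note that the local statement covers the possibility $u = v$: in a square relator one always has $\{u,v\} \in E(\Gamma)$, but a simplicial graph has no loops, so automatically $u \neq v$ and the conclusion "adjacent" is non-degenerate. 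With these observations in place the proof is complete.

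\begin{proof}
Suppose that $\alpha$ and $\beta$ are two transverse dual curves of $D$. By definition, their intersection contains the center of some square $2$-cell $C$ of $D$. Inside $C$, the second condition in the definition of a dual curve forces $\alpha$ to contain the straight segment linking the midpoints of a pair of opposite edges $e, e'$ of $C$, and similarly $\beta$ to contain the straight segment linking the midpoints of a pair of opposite edges $f, f'$ of $C$. Since $\alpha$ and $\beta$ meet at the center of $C$ and each is a single segment across $C$, the pairs $\{e,e'\}$ and $\{f,f'\}$ are the two distinct parallelism classes of edges of $C$; in particular $e$ and $f$ are not parallel in $C$.

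The boundary word of the square $C$ is a cyclic permutation of a relator $[x_g, x_h] = x_g x_h x_g^{-1} x_h^{-1}$, or of its inverse, for some $g \in G_u$ and $h \in G_v$ with $\{u,v\} \in E(\Gamma)$. Reading this word around $\partial C$ shows that the four edges of $C$ are labelled, in cyclic order, by $g, h, g, h$ up to orientation. Hence the two edges labelled by $g$ are opposite in $C$, the two edges labelled by $h$ are opposite in $C$, and any two non-parallel edges of $C$ are labelled by $g$ and $h$ respectively. Applying this to $e$ and $f$, we deduce that $e$ is labelled by $u$ and $f$ by $v$ (after possibly exchanging the roles of $g$ and $h$).

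By Lemma \ref{lem:labelhyp}, the label of $\alpha$ is the vertex of $\Gamma$ labelling $e$, namely $u$, and the label of $\beta$ is the vertex labelling $f$, namely $v$. Since $\{u,v\} \in E(\Gamma)$, the dual curves $\alpha$ and $\beta$ are labelled by two adjacent vertices of $\Gamma$.
\end{proof}
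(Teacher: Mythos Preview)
Your proof is correct and follows essentially the same approach as the paper: both reduce the claim to the local observation that two non-parallel (equivalently, adjacent) edges of a square $2$-cell are labelled by adjacent vertices of $\Gamma$, which is immediate from the form of the commutator relators in $\mathcal{Q}$. Your write-up is simply a more detailed unpacking of the one-line argument given in the paper.
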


\begin{proof}
It is sufficient to notice that any two adjacent edges of a square of $D$ are labelled by adjacent vertices of $\Gamma$. This observation is clear from the definition of the relations of $\mathcal{Q}$. 
\end{proof}

\noindent
As an immediate consequence:

\begin{cor}
A dual curve cannot self-intersect.
\end{cor}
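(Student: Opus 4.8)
The plan is to observe that the only way a dual curve could fail to be embedded is by passing twice through the center of a square, and that this would force the label of the curve to be adjacent to itself in $\Gamma$, which is impossible for a simplicial graph; so the corollary will be essentially immediate from Lemma \ref{lem:labeltransverse} (or directly from Lemma \ref{lem:labelhyp}).

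First I would make precise what ``self-intersect'' means for a dual curve $\alpha$. By the first condition in the definition of a dual curve, $\alpha$ meets any given edge $e$ of $D$ in at most one point (the midpoint of $e$), and by minimality together with the second and third conditions the intersection of $\alpha$ with a square (resp. a triangle) of $D$ is empty or one of the connecting segments, or both connecting segments (resp. empty or the tripod joining the center to the three edge-midpoints). Inside a triangle the only multiple point is the center, which is by definition a \emph{singularity}, hence a branch point rather than a crossing and not counted as a self-intersection (arcs are the components of the complement of the singularities). Therefore $\alpha$ self-intersects if and only if there is a square $C \subset D$ such that $\alpha$ contains the midpoints of two adjacent edges of $C$: indeed, the second condition then forces $\alpha$ to contain both segments joining opposite midpoints of $C$, and these cross at the center of $C$. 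In other words, $\alpha$ self-intersects precisely when $\alpha$ is transverse to itself.

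Next I would derive the contradiction. Suppose $\alpha$ is transverse to itself at the center of a square $C$, so that $\alpha$ crosses two adjacent edges $e, f$ of $C$. By Lemma \ref{lem:labelhyp}, all edges crossed by $\alpha$ carry the same label, say $u \in V(\Gamma)$; in particular $e$ and $f$ are both labelled by $u$. On the other hand, as already observed in the proof of Lemma \ref{lem:labeltransverse}, any two adjacent edges of a square of a diagram over $\mathcal{Q}$ are labelled by adjacent vertices of $\Gamma$, which is immediate from the form of the commutation relations of $\mathcal{Q}$. Hence $u$ would be adjacent to $u$, i.e. $\{u,u\} \in E(\Gamma)$, contradicting the fact that $\Gamma$ is simplicial. (Equivalently, one may invoke Lemma \ref{lem:labeltransverse} directly: a self-transverse dual curve would be ``labelled by two adjacent vertices of $\Gamma$'', but it carries a single label, so that label would be adjacent to itself.)

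There is no real obstacle here; the only point that needs care is the reduction of the statement to self-transversality, after which the conclusion follows at once from Lemma \ref{lem:labeltransverse} and the absence of loops in $\Gamma$.
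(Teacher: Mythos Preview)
Your argument is correct and is exactly the approach the paper has in mind: the paper records the corollary as ``an immediate consequence'' of Lemma~\ref{lem:labeltransverse} with no further proof, and you have simply unpacked that consequence by observing that a self-intersection is a self-transversality at a square, which would force the single label of the curve to be adjacent to itself in the simplicial graph $\Gamma$. There is nothing to add or correct.
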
\qed

\noindent
As a consequence of this corollary, an arc of a dual curve is homeomorphic to a segment, which implies that our next definition makes sense.

\begin{definition}
Let $\Gamma$ be a simplicial graph, $\mathcal{G}$ a collection of groups indexed by $V(\Gamma)$, $D$ a diagram over $\mathcal{Q}$, and $\alpha$ an arc of a dual curve. Two oriented edges $e$ and $e'$ \emph{have the same orientation along $\alpha$} if they are crossed by $\alpha$ and if the translation along $\alpha$ sends $e$ onto $e'$. 
\end{definition}

\noindent
Our last preliminary result on dual curves of diagrams is the following:

\begin{lemma}\label{lem:labelarc}
Let $\Gamma$ be a simplicial graph, $\mathcal{G}$ a collection of groups indexed by $V(\Gamma)$, $D$ a diagram over $\mathcal{Q}$, and $\alpha$ an arc of a dual curve. Any two oriented edges having the same orientation along $\alpha$ are labelled by the same generator of $\mathcal{Q}$. 
\end{lemma}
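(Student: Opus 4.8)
The plan is to reduce the claim to a purely local statement about the two combinatorial moves that generate an arc, exactly as in the proof of Lemma~\ref{lem:labelhyp}. An arc $\alpha$ of a dual curve is a connected component of the complement of the singularities, so it is homeomorphic to a segment; if $e$ and $e'$ are two oriented edges having the same orientation along $\alpha$, then there is a sequence of edges $e_0=e, e_1, \ldots, e_n=e'$ crossed successively by $\alpha$ such that, for each $i$, either $e_i$ and $e_{i+1}$ are parallel in a common square of $D$ (there is no triangle center strictly between them, since $\alpha$ restricted to a triangle always passes through the triangle's center, which would be a singularity), and moreover the translation along $\alpha$ sends the orientation of $e_i$ to the orientation of $e_{i+1}$. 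So it suffices to prove the statement for two such consecutive edges.

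First I would dispose of the square case. Suppose $e_i$ and $e_{i+1}$ are opposite edges of a square $C$ and have the same orientation along the arc passing through $C$. By definition of the presentation $\mathcal{Q}$, the boundary label of $C$ is a cyclic permutation of $[x_g,x_h]^{\pm 1}= x_g x_h x_g^{-1} x_h^{-1}$ (up to inverse), for some $g \in G_u$, $h \in G_v$ with $\{u,v\}\in E(\Gamma)$. Reading around the boundary cycle of $C$, the two edges labelled $x_g$ (the occurrences of $x_g$ and $x_g^{-1}$ in the relator) are the pair of opposite edges crossed by one dual curve, and similarly for the two edges labelled $x_h$. The key point is that in the word $x_g x_h x_g^{-1} x_h^{-1}$, the letter $x_g$ and the letter $x_g^{-1}$ occur at positions which, when the square is drawn in the plane with a coherent boundary orientation, point in \emph{opposite} directions around the boundary but in the \emph{same} direction across the square, i.e.\ the translation across $C$ carries the oriented edge labelled $x_g$ to the oriented edge labelled $x_g$ (not $x_g^{-1}$). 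Hence ``same orientation along $\alpha$'' forces the two edges to carry the \emph{same} generator $x_g$, not $x_g$ and $x_g^{-1}$. I would make this precise by choosing an explicit planar picture of the square (as in Figure~\ref{dualcurve}) and checking the four edges' labels and orientations directly; this is the one spot where a small case check is genuinely needed, and it is the main (though minor) obstacle, since one has to be careful that ``same orientation along $\alpha$'' is exactly the condition that rules out the $x_g$ vs.\ $x_g^{-1}$ ambiguity.

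Once the square case is established, an induction along the arc finishes the proof: the label of the generator crossed by $\alpha$ is constant along each arc because it is preserved by each elementary square-move, and the orientation-compatibility condition is transitive along the translation maps. I would phrase this as: let $x_s$ be the generator labelling $e_0=e$; by the square case and induction on $i$, each $e_i$ oriented as the translate of $e$ is labelled by $x_s$; in particular $e'=e_n$ is labelled by $x_s$. Note that this refines Lemma~\ref{lem:labelhyp}, which only says all edges crossed by the dual curve carry generators coming from the same vertex $u$ of $\Gamma$ (equivalently, lie in the same $G_u$); here we additionally control the group element and its orientation, but only within a single arc, which is why the hypothesis restricts to edges on the same side of all singularities. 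If one instead allowed crossing a singularity (the center of a triangle, coming from a relator $x_g x_h x_{gh}^{-1}$), the three edges carry $x_g$, $x_h$, $x_{gh}$, which are genuinely different generators, so the ``arc'' restriction is essential and there is nothing further to prove.
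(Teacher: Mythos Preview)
Your proposal is correct and follows essentially the same approach as the paper: reduce to a chain of parallel oriented edges in squares along the arc, then observe from the commutator relator that two opposite edges of a square, oriented coherently by the translation, carry the same generator $x_g$. The paper compresses all of your square-case discussion into the single sentence ``this observation is clear from the definition of the relations of $\mathcal{Q}$,'' but the content is identical.
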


\begin{proof}
If $e$ and $f$ are two oriented edges of $D$ which have the same orientation along $\alpha$, then there exists a sequence of oriented edges $e_0=e, e_1,\ldots, e_{n-1}, e_n=f$ such that, for every $0 \leq i \leq n-1$, the edges $e_i$ and $e_{i+1}$ are parallel in a common square (and point towards the same direction). Therefore, in order to prove our lemma, it is sufficient to notice that two oriented edges which are parallel in a common square are labelled by the same generator of $\mathcal{Q}$. But this observation is clear from the definition of the relations of $\mathcal{Q}$. 
\end{proof}

\noindent
Of course, a van Kampen or annular diagram with a fixed boundary is far from being unique. Our goal now is to define a few elementary transformations which allow us to modify diagrams without modifying their boundaries. First, an \emph{inversion} is the operation which amounts to invert an oriented edge and to replace its label by its inverse. Next, \emph{hexagonal moves}, \emph{pentagonal moves}, \emph{flips} and \emph{square-reductions} are illustrated by Figure \ref{elementary}. Any one of these five operations will be referred to as an \emph{elementary move}. An elementary but fundamental observation is:
\begin{figure}
\begin{center}
\includegraphics[trim={0 13cm 5cm 0},clip,scale=0.42]{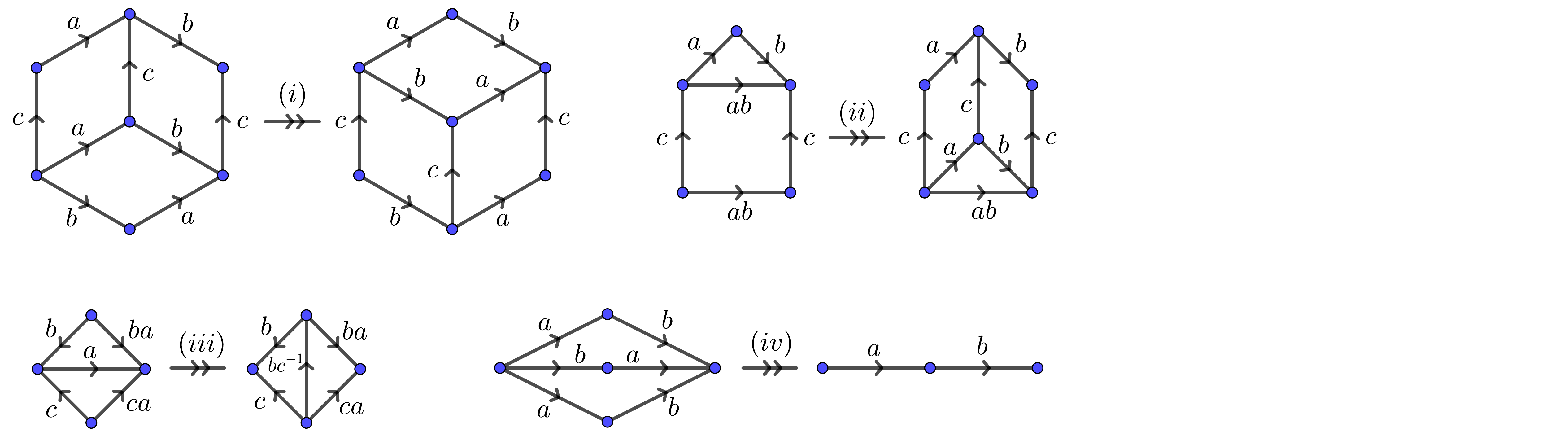}
\caption{A hexagonal move $(i)$, a pentagonal move $(ii)$, a flip $(iii)$, and a square-reduction $(iv)$.}
\label{elementary}
\end{center}
\end{figure}

\begin{lemma}
Let $\Gamma$ be a simplicial graph, $\mathcal{G}$ a collection of groups indexed by $V(\Gamma)$, and $D$ a diagram over $\mathcal{Q}$. If $D'$ is obtained from $D$ by an elementary move, then $D'$ is also a diagram over $\mathcal{Q}$. Moreover, if $D$ is a van Kampen diagram (resp. an annular diagram) then $D'$ is also a van Kampen diagram (resp. an annular diagram) and its boundary have the same label. 
\end{lemma}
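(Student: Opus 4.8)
The plan is to prove the statement by inspecting each of the five elementary moves separately, and for each one observing two things: (a) the resulting $2$-complex $D'$ is still a diagram over $\mathcal{Q}$, i.e.\ every $2$-cell of $D'$ reads a cyclic permutation of a relator of $\mathcal{Q}$ (or its inverse), and (b) the global combinatorial type is preserved (simple connectedness, or having exactly two complementary components), together with the boundary label. Since an inversion merely reverses an edge and inverts its label, (a) is immediate (reading the boundary of an incident $2$-cell in the other direction just replaces the relator by its inverse, which is still allowed), and the underlying $2$-complex is untouched, so (b) and the boundary label are clear. It therefore remains to handle the four moves depicted in Figure~\ref{elementary}.

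For the hexagonal and pentagonal moves, I would argue as follows. A hexagonal move operates inside a hexagon tiled in two ways by two squares and two triangles (this is precisely the planar picture witnessing associativity/commutativity compatibility, e.g.\ $x_gx_hx_k$ with $g,h$ in one vertex-group and $k$ in an adjacent one, or similar); the key point is that both tilings have the same boundary hexagon, so no boundary edge or label is affected, and each of the new $2$-cells again reads one of the defining relations of $\mathcal{Q}$ by construction of the move. The pentagonal move is analogous, exchanging a tiling of a pentagon by one square and two triangles for another such tiling (coming from the mixed relation $x_gx_h=x_{gh}$ interacting with a commutation relation); again the boundary pentagon and its label are unchanged and the new cells are relator cells. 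In both cases, since the move is supported in a closed disc whose boundary circle is fixed, replacing the interior tiling does not change which complementary components $\mathbb{R}^2\setminus D$ has, nor whether $D$ is simply connected; the basepoint(s), lying on the boundary of $D$, are untouched when the move is performed away from them, and if a move is needed near a basepoint one first applies it elsewhere or notes the basepoint can be taken on an edge not involved.

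For a flip, the operation replaces a two-triangle configuration filling a square-shaped region (two triangles glued along a diagonal) by the other diagonal, or a two-square/two-triangle local picture by its mirror; once more the outer boundary of the affected disc and its labels are preserved, and each new triangle reads a relation $x_gx_h=x_{gh}$ since the labels around the outer boundary are determined and the only freedom is the diagonal, which is forced by the multiplication in the common vertex-group. A square-reduction removes a pair of squares that form a sphere-like bigon of squares (two squares sharing two opposite edges, dual to a dual curve that bounds), collapsing them and identifying the two boundary paths; here one must check that the two identified paths carry the same label, which follows from Lemma~\ref{lem:labelarc} (edges in the same square, parallel, have the same label) applied to the two squares being removed, so the identification is label-consistent and the boundary of $D$ is unchanged. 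Removing cells from the interior of a disc cannot create or destroy complementary components or change simple connectedness, and the basepoints survive.

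The main obstacle I expect is purely bookkeeping rather than conceptual: one must be careful that the local pictures in Figure~\ref{elementary} are drawn with \emph{all} admissible edge-orientations and labelings (not just one representative), so that ``each new $2$-cell reads a relator'' is verified in full generality rather than on a lucky diagram; and one must make sure the move, as an operation on the abstract planar $2$-complex, is well-defined when the affected region touches the boundary or a basepoint of $D$ (the cleanest fix is to observe that an elementary move is by definition supported on an embedded closed disc in the plane meeting $\partial D$ in at most an arc of its boundary circle, so gluing the modified disc back in is unambiguous). Once these points are settled, the conclusion --- $D'$ is a diagram over $\mathcal{Q}$, of the same type (van Kampen or annular) and with the same boundary label --- falls out of the disc-replacement description of each move. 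I would present this as a short case analysis with a reference to Figure~\ref{elementary} for the pictures and to Lemma~\ref{lem:labelarc} for the square-reduction case, without writing out every relator check.
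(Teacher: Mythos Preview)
Your approach is exactly what the paper intends: in the paper this lemma is stated and immediately followed by a \qed with no argument, so the proof is left as a routine inspection of Figure~\ref{elementary}, which is precisely the case-by-case disc-replacement analysis you outline.

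One caveat: since you could not see Figure~\ref{elementary}, a couple of your guesses about the local configurations are off. The hexagonal move involves three \emph{squares} filling a hexagon (the classical cube-complex hexagon move, as confirmed by its use on the square-only subdiagram $\Delta_0$ in the proof of Proposition~\ref{prop:Dehn}), not two squares and two triangles. The pentagonal move involves one square and one triangle sharing an edge (giving a pentagon), not one square and two triangles; this is what allows one to ``slide'' a triangle past a square along a dual curve, as in Figures~\ref{triangleboundary} and~\ref{triangletriangle}. These misidentifications do not affect the logic of your argument, since in each case the correct configuration still satisfies the two properties you isolate: the outer boundary word of the replaced disc is unchanged, and every new $2$-cell reads a relator of $\mathcal{Q}$. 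Once the figure is in hand, your write-up would simply need those two descriptions corrected.
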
\qed

\section{Normal form and word problem}\label{section:normalform}

\noindent
In this section, our goal is to exploit the notion of dual curves introduced in the previous section in order to prove the normal form of graph products (Theorem \ref{thm:minlength} and Corollary~\ref{cor:graphicallyreduced} below) and to show how to solve the word problem (Corollary \ref{cor:WP}). We begin by defining the notion of reduced words which is relevant in the context of graph products of groups.

\begin{definition}
Let $\Gamma$ be a simplicial graph, $\mathcal{G}$ a collection of groups indexed by $V(\Gamma)$, and $w=s_1 \cdots s_n$ a word of length $n \geq 1$ written over $\bigcup\limits_{G \in \mathcal{G}} G$. The $s_i$'s will be referred to as the \emph{syllables} of $w$. The word $w$ is \emph{graphically reduced}
\begin{itemize}
	\item if $n=1$ and $s_1=1$, or $s_i \neq 1$ for every $1 \leq i \leq n$;
	\item and if, for every $1 \leq i < j \leq n$, either $s_i$ and $s_j$ do not belong to the same vertex-group or there exists some $i< k < j$ such that $s_k$ does not belong to a vertex-group adjacent to the common vertex-group containing $s_i$ and $s_j$. 
\end{itemize}
\end{definition}

\noindent
Notice that if $E(\Gamma)= \emptyset$, then $\Gamma \mathcal{G}$ is a free product and graphically reduced words coincide with alternating words. 

\medskip \noindent
The main result of this section is the following statement:

\begin{thm}\label{thm:minlength}
Let $\Gamma$ be a simplicial graph and $\mathcal{G}$ a collection of groups indexed by $V(\Gamma)$. A word written over $\bigcup\limits_{G \in \mathcal{G}} G \backslash \{ 1 \}$ has minimal length with respect to the generating set $\bigcup\limits_{G \in \mathcal{G}} G$ if and only if it is graphically reduced.
\end{thm}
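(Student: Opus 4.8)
The plan is to prove the two implications separately, with the substantive content being the direction "minimal length $\Rightarrow$ graphically reduced", or rather its contrapositive, and using van Kampen diagrams and dual curves for the converse.

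First, the easy direction: if $w$ is not graphically reduced, then it is not of minimal length. Indeed, if some syllable $s_i$ equals $1$ (with $n \geq 2$) we can delete it and shorten $w$. Otherwise there are indices $i<j$ with $s_i, s_j$ in a common vertex-group $G_u$ and every intermediate syllable $s_k$ ($i<k<j$) lying in a vertex-group adjacent to $u$; then, using the commutation relations, $w$ is equal in $\Gamma\mathcal{G}$ to a word in which $s_i$ and $s_j$ have been brought together, and replacing the resulting syllable $s_is_j \in G_u$ by a single generator (or deleting it if $s_is_j = 1$) strictly shortens $w$. Hence minimal-length words are graphically reduced.

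For the converse, suppose $w=s_1\cdots s_n$ is graphically reduced but not of minimal length; I want to derive a contradiction. Pick a word $w'$ of strictly smaller length with $w = w'$ in $\Gamma\mathcal{G}$. Then $w(w')^{-1} = 1$ in $\Gamma\mathcal{G}$, so by the proposition relating van Kampen diagrams to the word problem there is a van Kampen diagram $D$ over $\mathcal{Q}$ whose boundary is labelled by the word $w(w')^{-1}$ (each syllable $s_i$ contributing one edge labelled $x_{s_i}$, and similarly for $w'$, reading appropriately). The key structural input is Proposition \ref{intro:dualcurve}: in a simply connected diagram every dual curve that is not a circle cannot contain a homotopically non-trivial embedded circle, so — since $D$ is simply connected, hence every embedded circle bounds — a dual curve is either a genuine circle (impossible here, as a circle dual curve would have to be disjoint from the boundary, but in a van Kampen diagram one can reduce to the case with no such closed dual curves, or argue they can be removed) or a tree whose endpoints are midpoints of boundary edges. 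I would first use elementary moves (flips, square-reductions, hexagonal and pentagonal moves) to pass to a "reduced" diagram $D$ with the same boundary label and with no closed dual curves and no dual curve having two of its boundary endpoints on the $w$-part that would witness a repeated-syllable cancellation; then I analyze the endpoints of the dual curves.

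The main obstacle is the combinatorial bookkeeping at the boundary: I must show that, after reduction, a dual curve of $D$ emanating from an edge of the $w$-arc cannot return to another edge of the $w$-arc, for otherwise Lemma \ref{lem:labelhyp}, Lemma \ref{lem:labeltransverse} and Lemma \ref{lem:labelarc} would force the two corresponding syllables $s_i, s_j$ to lie in the same vertex-group and every syllable strictly between them to lie in an adjacent vertex-group (these are precisely the dual curves the curve must cross to get from one side to the other), contradicting that $w$ is graphically reduced. Consequently every dual curve starting on the $w$-arc ends on the $(w')^{-1}$-arc, which gives an injection from the $n$ edges of the $w$-arc into the edges of the $(w')^{-1}$-arc, whence $n \leq \mathrm{length}(w') < n$, a contradiction. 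Care is needed to make the notion of "reduced diagram" precise and to check that the reduction process terminates (it decreases, say, the number of $2$-cells or the total length of dual curves) and preserves the boundary label; and one must handle the degenerate case $n=1$, $s_1=1$ separately, where the statement is immediate. This completes the sketch; combining both directions yields the theorem.
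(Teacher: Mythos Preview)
Your approach is essentially the paper's: same easy direction, and for the converse the paper also builds a van Kampen diagram bounded by $w$ and a shorter representative, then proves (Claim~\ref{claim:graphicallyreduceddiagram}) that no dual curve meets the $w$-arc twice, giving the length inequality.

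Two comments. First, the detour through elementary moves to eliminate closed dual curves is unnecessary: by Corollary~\ref{cor:dualtree} every dual curve in a van Kampen diagram is already a circle or a tree, and a circle never meets $\partial D$, so it is irrelevant to the edge-counting. You can work with any diagram. Second, your argument that the dual curve through an intermediate edge $e_k$ must be transverse to $\gamma$ has a gap: a priori that dual curve could itself return to the $w$-arc inside the region cut off by $\gamma$, without crossing $\gamma$. The paper fixes this by choosing the pair $(e_i,e_j)$ to be \emph{innermost} (the subsegment $\xi$ between them of minimal length); then no dual curve hits $\xi$ twice, so any dual curve through an edge of $\xi$ is a tree with a single leaf on $\xi$ and must exit the disc bounded by $\xi$ and an arc of $\gamma$, forcing transversality. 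Make this minimality step explicit. (A minor difference: the paper takes the comparison word of minimal length, hence graphically reduced, and gets a bijection; your injection from an arbitrary shorter $w'$ works just as well.)
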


\noindent
We begin by proving the following key proposition, which will be also useful when dealing with the conjugacy problem:

\begin{prop}\label{prop:circleindiag}
Let $D$ be a diagram and $C \subset D$ a dual curve. If $C$ is not a circle, then an embedded circle in $C$ cannot be homotopically trivial in $D$. 
\end{prop}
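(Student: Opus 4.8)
The plan is to argue by contradiction: suppose $C$ is not a circle but contains an embedded circle $\gamma$ which bounds a disc $\Delta$ in $D$, and derive a contradiction by choosing $\gamma$ (and hence $\Delta$) to be innermost, i.e. with $\Delta$ minimal. Since $C$ is not a circle, it has at least one singularity, i.e.\ it passes through the center of a triangle; more to the point, $\gamma$ itself must either carry a singularity or else, being a circle inside a non-circle dual curve, it must ``branch off'' somewhere. The key dichotomy is: either $\gamma$ is a regular circle inside $C$ but $C$ is strictly larger (so along $\gamma$ some arc leaves the circle), or $\gamma$ passes through a singularity. In both cases I will examine the subdiagram $\Delta$ enclosed by $\gamma$ and count dual curves entering it.

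\textbf{Key steps.} First, I would set up the innermost-disc argument: among all embedded circles in $C$ that are homotopically trivial in $D$, pick one, $\gamma$, bounding a disc $\Delta\subset D$ of minimal area (number of $2$-cells). Second, I would analyze which dual curves of $D$ meet $\Delta$. Every edge of $D$ crossed by a dual curve has a well-defined label (Lemma~\ref{lem:labelhyp}), and transverse dual curves have adjacent labels (Lemma~\ref{lem:labeltransverse}); a dual curve cannot self-intersect (the corollary after Lemma~\ref{lem:labeltransverse}). The circle $\gamma\subset C$ crosses a cyclic sequence of edges, all labelled by the label $u$ of $C$. Third — the crucial local step — I would show that any dual curve $C'$ that enters $\Delta$ through an edge on $\gamma$ must exit again, and trace what happens: if $C'$ re-enters the region bounded by $\gamma$, then inside $\Delta$ one finds a smaller embedded circle (either in $C'$, contradicting minimality of $\Delta$, or producing a homotopically trivial circle in a dual curve living strictly inside $\Delta$) or a bigon between $C$ and $C'$. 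The parity/innermost considerations should force $C'$ to cross $\gamma$, creating with a sub-arc of $\gamma$ a smaller loop, contradicting minimality. One must also handle the singularity case: if $\gamma$ runs through the center of a triangle $T$, the three arcs emanating from that center go to the three edges of $T$, so $\gamma$ uses two of them and the third arc of $C$ leaves $\gamma$; following that third arc, it must re-enter $\Delta$ (it cannot escape since $\gamma$ separates), again producing a smaller trivial circle or a forbidden configuration.

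\textbf{Main obstacle.} The hard part will be the bookkeeping in the ``tracing'' step: showing rigorously that a dual curve entering the innermost disc $\Delta$ cannot wander around and come back without either self-intersecting (forbidden), crossing $C$ transversally and then closing up into a strictly-smaller homotopically-trivial embedded circle, or forcing $\Delta$ to contain a proper subdisc bounded by a circle in some dual curve. In particular I expect the subtlety to be that the ``embedded circle in $C$'' need not be a connected component of $C$, so one has to be careful about how $C$ re-intersects the region — this is exactly where the hypothesis ``$C$ is not a circle'' is used, because a branch point (singularity) of $C$ on $\gamma$ or an arc of $C$ tangent to $\gamma$ is what ultimately gives the contradiction. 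I would organize the case analysis around an innermost singularity or innermost branch and use the Jordan curve theorem freely, since $D$ is a planar $2$-complex. Once the local picture is pinned down, the contradiction with minimality of $\Delta$ is immediate.
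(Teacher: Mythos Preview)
Your innermost-disc setup is fine and matches the paper's start, but from there the argument diverges and does not reach a contradiction. Two concrete issues:

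First, your dichotomy is illusory. Branching in a dual curve can only occur at centers of triangles, so an embedded circle $\gamma\subset C$ with no singularities would already be all of $C$. Hence the ``regular circle with branches off'' case never arises; you are always in the singularity case.

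Second, in that case your key claim is backwards. You say the third arc at a singularity on $\gamma$ ``must re-enter $\Delta$ (it cannot escape since $\gamma$ separates)''. But $\gamma$ separates $D$ into an inner disc $\Delta$ and an \emph{outer} region, and the third arc can perfectly well go outward; in fact, by minimality of $\Delta$ it \emph{must} go outward (otherwise following it would produce a smaller homotopically trivial circle). So at the end of your tracing you are left with a circle all of whose extra branches point outside $\Delta$, and nothing you have set up gives a contradiction. Your sketch never identifies what the terminal impossible configuration is.

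The paper's proof supplies exactly this missing idea, and it is not a tracing argument at all: once all the extra arcs at singularities of $\gamma$ point outward, one applies \emph{pentagonal moves} to slide the triangles along $\gamma$ until they are adjacent, and then \emph{flips} to cancel them in pairs, producing (in a modified diagram) an embedded circle in a dual curve with exactly one singularity. That configuration is impossible because, by Lemma~\ref{lem:labelarc}, the arc closing up on itself forces the third edge of the lone triangle to be labelled by the neutral element, which is not a generator of $\mathcal{Q}$. The use of elementary moves to change the diagram, and the labelling contradiction at the end, are the heart of the proof and are absent from your proposal.
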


\begin{proof}
Suppose for contradiction that $C$ is not a circle and that it contains an embedded circle which is homotopically non-trivial in $D$. If $c \subset C$ is such a circle, define its area as the number of two-cells which are included into the inner component of $D \backslash c$. Choose an embedded homotopically non-trivial circle $c \in C$ which has minimal area. 

\medskip \noindent
Because $C$ is not a circle, $c$ has to contain at least one singularity of $C$, say $p \in C$. Let $\alpha \subset C$ denote the arc of $C$ whose closure contains $p$ and which is disjoint from $c$. Notice that, as a consequence of the minimality of $c$, $p$ cannot be included into the inner component of $D \backslash c$. Otherwise saying, if $\Delta$ denotes the triangle having $p$ as its center, then the edge of $\Delta$ which is disjoint from $c$ must be contained into the outer component of $D \backslash c$. So the union of the two-cells intersecting $c$ has the form given by Figure \ref{cycleshift}(a).

\medskip \noindent
Notice that, by applying pentagonal moves to $D$, it is possible to shift the triangles along the cycle. More precisely, by applying pentagonal moves to $D$, we create a new diagram $D'$ containing a subdiagram of the form given by Figure \ref{cycleshift}(b). Next, by applying flips to $D'$, we can remove all but one triangle from our cycle. More precisely, by applying flips to $D'$, we create a new diagram $D''$ containing a subdiagram of the form given by Figure \ref{cycleshift}(c). But such a diagram cannot exist. More precisely: 

\begin{claim}\label{claim:singlesing}
An embedded circle in a dual curve cannot contain exactly one singularity.
\end{claim}

\noindent
If a dual curve contains an embedded circle on which there is a single singularity, it precisely means that a dual curve contains an arc whose closure is a circle. It follows from Lemma \ref{lem:labelarc} that the edge of the triangle containing the singularity which is disjoint from the arc must be labelled by the neutral element, which is impossible because it is not a generator of $\mathcal{Q}$.
\begin{figure}
\begin{center}
\includegraphics[trim={0 19cm 15cm 0},clip,scale=0.43]{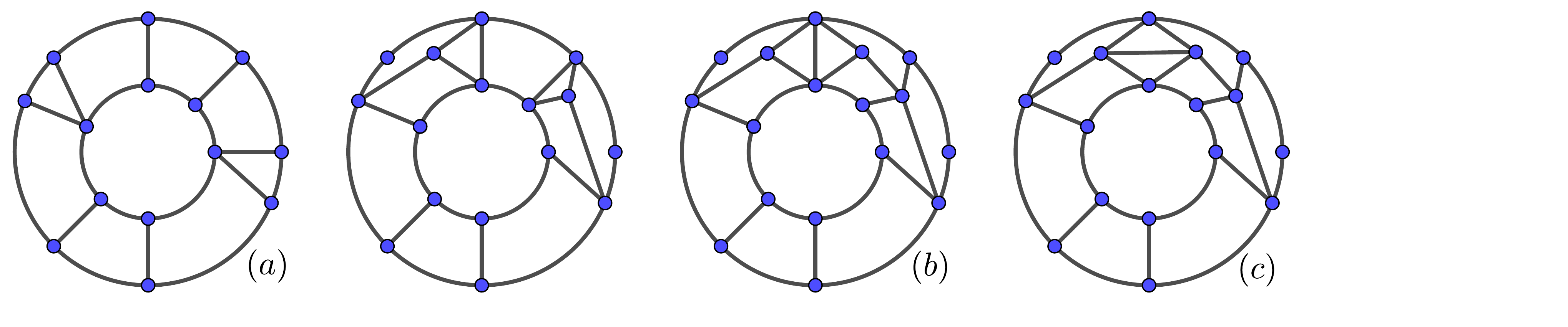}
\caption{Subdiagrams in the proof of Proposition \ref{prop:circleindiag}.}
\label{cycleshift}
\end{center}
\end{figure}
\end{proof}

\noindent
As van Kampen diagrams are simply connected by definition, an immediate consequence of Proposition \ref{prop:circleindiag} is the following statement:

\begin{cor}\label{cor:dualtree}
Let $\Gamma$ be a simplicial graph and $\mathcal{G}$ a collection of groups indexed by $V(\Gamma)$. In a van Kampen diagram over $\mathcal{Q}$, dual curves are circles or trees. 
\end{cor}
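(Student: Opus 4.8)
The plan is to deduce Corollary~\ref{cor:dualtree} directly from Proposition~\ref{prop:circleindiag} together with the defining property of van Kampen diagrams, namely that they are simply connected. First I would fix a van Kampen diagram $D$ over $\mathcal{Q}$ and an arbitrary dual curve $C \subset D$, and argue by contradiction: suppose $C$ is neither a circle nor a tree. The key observation is that $C$, being a finite graph embedded in the plane (its vertices are the singularities and the midpoints of boundary edges it meets, its edges are the arcs), fails to be a tree precisely when it contains an embedded circle. If, in addition, $C$ is not itself a circle, then Proposition~\ref{prop:circleindiag} applies and tells us that this embedded circle is homotopically non-trivial in $D$.

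The contradiction then comes from the hypothesis that $D$ is a van Kampen diagram, hence simply connected, so \emph{every} loop in $D$ — in particular every embedded circle contained in $C$ — is homotopically trivial. This directly contradicts the conclusion of Proposition~\ref{prop:circleindiag}, so no such $C$ can exist; every dual curve must be a circle or a tree. I would write this out in essentially two sentences, since all the real work has already been done in the proposition.

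The only point that requires a word of care — and the one I would expect a referee to want spelled out — is the elementary combinatorial-topology claim that a connected finite graph embedded in the plane which is not a tree contains an \emph{embedded} circle. This is standard (a connected graph with a cycle contains a simple cycle, and a simple cycle in a planar embedding is an embedded circle), but since dual curves are stratified objects rather than ordinary graphs, I would note explicitly that a dual curve is homeomorphic to a finite one-dimensional CW-complex whose underlying graph has vertices at the singularities and at the boundary midpoints it crosses, and whose edges are the arcs; with this identification the claim is purely about finite graphs. There is no substantive obstacle here — the corollary is an immediate harvesting of Proposition~\ref{prop:circleindiag} — so the ``hard part'' is really just phrasing the reduction cleanly and not over-explaining.
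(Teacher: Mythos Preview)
Your proposal is correct and follows exactly the paper's approach: the paper states the corollary as an immediate consequence of Proposition~\ref{prop:circleindiag} together with the simple connectedness of van Kampen diagrams, without further proof. Your additional remark that a connected finite graph which is not a tree contains an embedded circle is a welcome clarification of the only implicit step, but the argument is otherwise identical.
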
\qed

\noindent
We are now ready to prove Theorem \ref{thm:minlength}.

\begin{proof}[Proof of Theorem \ref{thm:minlength}.]
Let $w=s_1 \cdots s_n$ be a word of length $n \geq 1$ written over $\bigcup\limits_{G \in \mathcal{G}} G$. 

\medskip \noindent
Assume that $w$ is not graphically reduced. Two cases may happen. First, if $n \geq 2$ and if $s_i=1$ for some $1 \leq i \leq n$, then $w$ can be shortened as 
$$s_1 \cdots s_{i-1}s_{i+1} \cdots s_n.$$ 
Second, if there exist $1 \leq i < j \leq n$ such that $s_i$ and $s_j$ belong to a common vertex-group $G \in \mathcal{G}$ and such that $s_k$ belong to a vertex-group adjacent to $G$ for every $i < k < j$, then $w$ can be shortened as
$$s_1 \cdots s_{i-1}(s_is_j) s_{i+1} \cdots s_{j-1}s_{j+1} \cdots s_n.$$
Thus, we have proved that a word which is not graphically reduced does not have minimal length, or equivalently, that a word of minimal length must be graphically reduced. 

\medskip \noindent
Now, let us assume that $w$ is graphically reduced. Fix a word $u$ of minimal length representing $w$ in $\Gamma \mathcal{G}$. As a consequence, there exists a van Kampen diagram $D$ whose boundary $\partial D$ is labelled by $uw^{-1}$. Notice that the word $u$ is also graphically reduced according to the previous paragraph. Therefore, it follows from our claim below that any dual curve of $D$ is regular, and that it crosses exactly two edges of $\partial D$, corresponding to two syllables of $u$ and $w^{-1}$ respectively. Consequently, the map which associates to a syllable $s$ of $u$ the syllable $r$ of $w^{-1}$ such that $r$ and $s$ labels two edges of $\partial D$ dual to the same dual curve defines a bijection from the set of syllables of $u$ to the set of syllables of $w^{-1}$. Therefore, the words $u$ and $w$ have the same length, which implies that $w$ must have minimal length.

\begin{claim}\label{claim:graphicallyreduceddiagram}
Let $D$ be a van Kampen diagram. If $\sigma \subset \partial D$ is a subsegment labelled by a graphically reduced word, then any dual curve of $D$ intersects $\sigma$ at most once.
\end{claim}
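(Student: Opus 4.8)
The plan is to argue by contradiction: suppose some dual curve $\alpha$ of $D$ crosses the subsegment $\sigma \subset \partial D$ at least twice, and pick such a configuration that is ``innermost''. More precisely, among all pairs of edges $e,f \subset \sigma$ crossed by a common dual curve, choose one minimizing the number of edges of $\sigma$ strictly between $e$ and $f$; call $\alpha$ the corresponding dual curve. By Corollary \ref{cor:dualtree}, since $D$ is a van Kampen diagram, $\alpha$ is a circle or a tree. If $\alpha$ is a circle then it is disjoint from $\partial D$ (a circle dual curve bounds a disc subdiagram), contradicting that it crosses $\sigma$; so $\alpha$ is a tree, and an arc of $\alpha$ runs from the midpoint of $e$ to the midpoint of $f$ (possibly through singularities).

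The first main step is to upgrade ``innermost pair crossed by a common dual curve'' to ``innermost pair crossed by a common \emph{arc}'' of that dual curve, i.e.\ to reduce to the case where the portion of $\alpha$ joining $e$ and $f$ is a single arc with no singularity between them that branches off toward $\sigma$. Here I would use that a singularity $p$ lying on the $e$--$f$ path has a third edge of its triangle leading into a new arc; following that arc, it must terminate somewhere. Using Proposition \ref{prop:circleindiag} (no homotopically trivial embedded circle inside a non-circle dual curve) together with the tree structure, each such branch-arc, being a tree branch, ends either at the boundary or runs back — and by the innermost choice it cannot re-enter $\sigma$ between $e$ and $f$, so after a little bookkeeping I can assume the $e$--$f$ arc is regular. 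The second step, using Lemma \ref{lem:labelhyp}, is that $e$ and $f$ are labelled by the same vertex $v$ of $\Gamma$; hence the syllables $s_i, s_j$ of $w$ corresponding to $e$ and $f$ lie in the common vertex-group $G_v$. The third step is to show that every edge of $\sigma$ strictly between $e$ and $f$ corresponds to a syllable lying in a vertex-group adjacent to $v$: any dual curve $\beta$ crossing such an intermediate edge must, to escape the disc region cut off by the arc of $\alpha$ together with the subsegment of $\sigma$ between $e$ and $f$, cross $\alpha$; being transverse to $\alpha$, Lemma \ref{lem:labeltransverse} forces the label of $\beta$ to be adjacent to $v$. (One must check $\beta$ really does exit this region; this follows because $\beta$ is a tree or circle and cannot die in the interior at an edge of $\partial D$, and by the innermost choice it cannot hit $\sigma$ again between $e$ and $f$, so it must cross the arc of $\alpha$.) But then $s_i$ and $s_j$ violate the graphical-reduction condition for $w$, contradicting the hypothesis that $\sigma$ is labelled by a graphically reduced word.

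The step I expect to be the main obstacle is the second part of the first step — making rigorous the passage from an arbitrary $e$--$f$ path in the tree $\alpha$ (which may wander through many singularities, each spawning side-arcs) to a clean regular arc, and in particular verifying that no side-arc forces a crossing of $\sigma$ that would contradict innermost-ness, while simultaneously keeping track of which ``region'' of $D$ is being cut off so that the Lemma \ref{lem:labeltransverse} argument in the third step applies to the correct subdiagram. A clean way to organize this is to take the \emph{disc subdiagram} $E \subset D$ bounded by the subsegment of $\sigma$ from $e$ to $f$ together with a simple path in $\alpha$ from the midpoint of $e$ to the midpoint of $f$, chosen so that $E$ has minimal area; then one argues directly inside $E$, where minimality of area prevents any dual curve from having a nontrivial loop (again invoking Proposition \ref{prop:circleindiag}) and forces the relevant arcs to behave, so that the labelling arguments of Lemmas \ref{lem:labelhyp}--\ref{lem:labeltransverse} go through without delicate global combinatorics.
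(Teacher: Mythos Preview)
Your overall strategy---choose an innermost pair $e,f$ on $\sigma$ crossed by a common dual curve, then invoke Lemma~\ref{lem:labelhyp} to get a common vertex $v$ and Lemma~\ref{lem:labeltransverse} to force the intermediate syllables into groups adjacent to $G_v$---is exactly the paper's approach, and your second and third steps match the paper's argument essentially verbatim.

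However, the ``first main step'' you single out, and the obstacle you anticipate there, are unnecessary. The paper does not reduce to a regular arc between $e$ and $f$, and there is no need to. Once Corollary~\ref{cor:dualtree} tells you that $\alpha$ is a tree, the unique simple path in $\alpha$ from the midpoint of $e$ to the midpoint of $f$---singularities and all---already does the separating job: together with the subsegment $\xi\subset\sigma$ strictly between $e$ and $f$ (and the two half-edges of $e$ and $f$) it bounds a disc region in the simply connected diagram $D$. Now any dual curve $\beta$ crossing an edge of $\xi$ enters that region and, being a tree with leaves on $\partial D$, must exit it. It cannot recross $\xi$ (minimality of the pair $e,f$), it cannot cross $e$ or $f$ (that would force $\beta=\alpha$), and it cannot exit through a triangle on the path (two distinct dual curves never share a triangle, since all three edges of a triangle lie in a single dual curve). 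Hence $\beta$ exits through the centre of a square on the path, i.e.\ $\beta$ is transverse to $\alpha$, and Lemma~\ref{lem:labeltransverse} applies immediately. The side-arcs of $\alpha$ branching off at singularities are irrelevant to this separation argument, and by minimality they cannot land back on $\xi$ anyway. So your minimal-area disc-subdiagram workaround, while not incorrect, is solving a difficulty that is not actually present; the paper's proof is correspondingly much shorter.
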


\noindent
Our argument is illustrated by Figure \ref{hyptwice}. Let $\sigma \subset \partial D$ be a subsegment, and let $r_1 \cdots r_m$ denote the word labelling it. Assume that there exists a dual curve of $D$ intersecting $\sigma$ at least twice, and let $e,f \subset \sigma$ be two edges crossed by the same dual curve $\gamma$ such that the subsegment $\xi \subset \sigma$ between them has minimal length. Let $1 \leq i < j \leq n$ denote the indices such that $r_i$ and $r_j$ label $e$ and $f$. Notice that the subsegment $\xi \subset \sigma$ is labelled by $r_{i+1} \cdots r_{j-1}$, and that $r_i$ and $r_j$ belong to a common vertex-group, say $G \in \mathcal{G}$, according to Lemma \ref{lem:labelhyp}. Because any dual curve of $D$ which intersects $\partial D$ has to be a tree according to Corollary \ref{cor:dualtree} and because no dual curve of $D$ intersects $\xi$ twice by minimality, it follows the dual curves of $D$ intersecting $\xi$ have to be transverse to $\gamma$. We deduce from Lemma \ref{lem:labeltransverse} that $r_k$ belongs to a vertex-group which is adjacent to $G$ for every $i <k <j$. Thus, we have proved that word $r_1 \cdots r_m$ labelling $\sigma$ is not graphically reduced. 
\begin{figure}
\begin{center}
\includegraphics[trim={0 16cm 39cm 0},clip,scale=0.5]{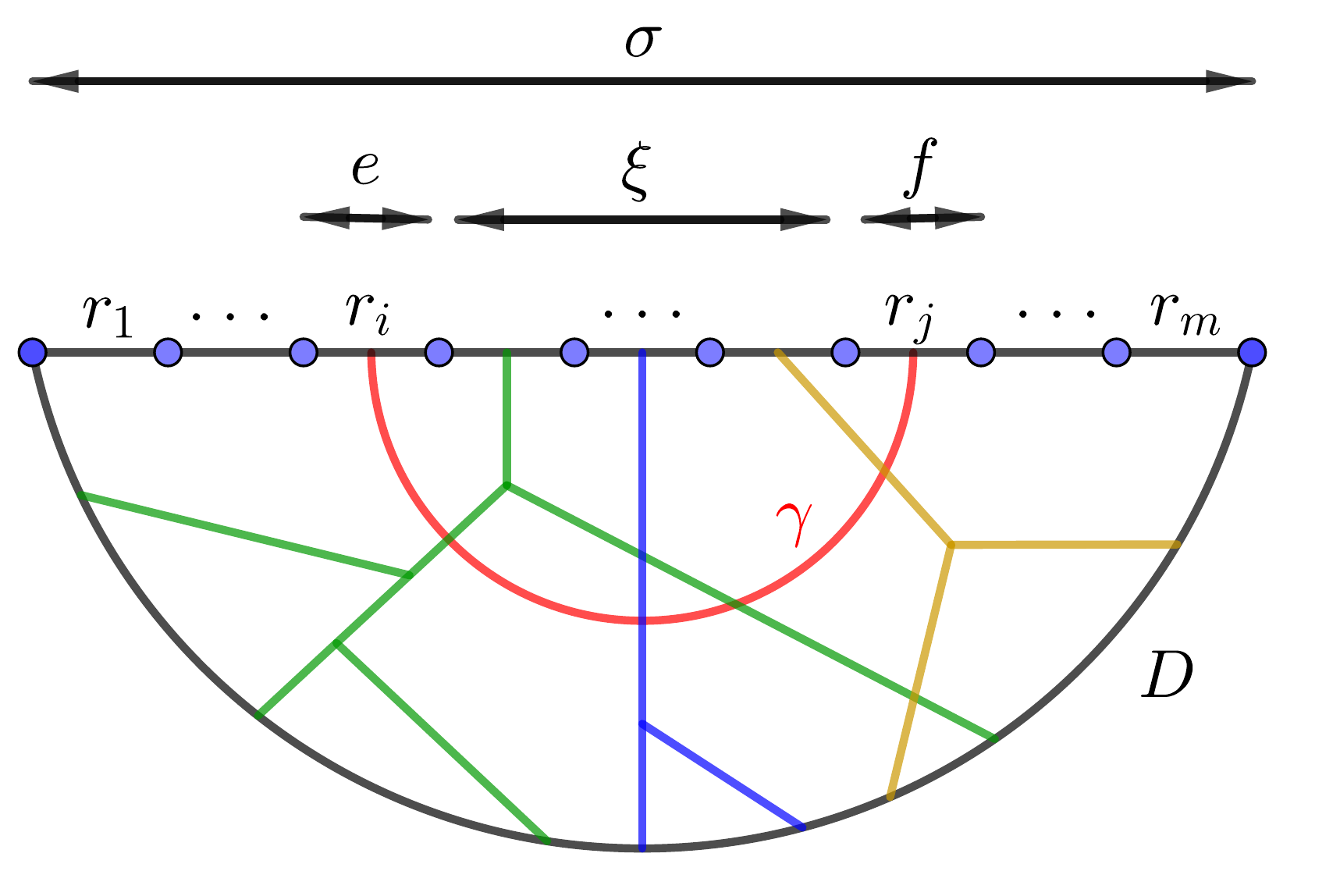}
\caption{Configuration from the proof of Claim \ref{claim:graphicallyreduceddiagram}.}
\label{hyptwice}
\end{center}
\end{figure}
\end{proof}

\noindent
Thanks to the arguments used above, we are now ready to show how to solve the word problem in graph products of groups.

\begin{cor}\label{cor:WP}
A graph product of groups admitting a solvable word problem has a solvable word problem as well. 
\end{cor}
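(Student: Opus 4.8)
The plan is to reduce the word problem in $\Gamma\mathcal{G}$ to the word problems in the vertex-groups, using graphically reduced words as the analogue of a normal form. The key observation, already supplied by Theorem \ref{thm:minlength} and its proof, is that a word over $\bigcup_{G\in\mathcal{G}} G\setminus\{1\}$ is trivial in $\Gamma\mathcal{G}$ if and only if, after repeatedly deleting trivial syllables and merging two syllables $s_i,s_j$ lying in a common vertex-group $G$ whose intervening syllables all lie in vertex-groups adjacent to $G$ (a \emph{shuffle-and-merge} step), one ends up with the empty word. So the algorithm is: given an input word $w=s_1\cdots s_n$ (each $s_i$ presented as a word in the fixed generating set of its vertex-group), exhaustively search for a pair $i<j$ with $s_i,s_j$ in the same vertex-group $G$ and all intermediate $s_k$ in vertex-groups adjacent to $G$; when found, replace $s_i$ and $s_j$ by the single syllable $s_is_j$ (computed in $G$), then drop it if it is trivial in $G$; iterate until no such pair exists. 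Accept iff the resulting word is empty.

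First I would make precise what ``computable'' means here: each vertex-group $G_u$ has a solvable word problem with respect to some fixed finite generating set $S_u$, so we can (i) decide whether a word in $S_u^{\pm}$ represents $1$ in $G_u$, and (ii) given two such words, produce a word representing their product (trivially, by concatenation) and in particular decide whether that product is trivial. Since $\Gamma$ is assumed finite throughout the relevant parts of the paper — and even if not, the finite subgraph spanned by the vertices appearing in $w$ suffices — checking adjacency in $\Gamma$ is decidable, so each shuffle-and-merge step is effectively detectable and performable. Next I would argue termination: each merge step strictly decreases the number of syllables, and each deletion step also strictly decreases it, so the process halts after at most $n$ iterations. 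Finally I would invoke Theorem \ref{thm:minlength}: the terminal word is graphically reduced (no further deletion or merge is possible), hence has minimal length over $\bigcup_G G$; since $w=1$ in $\Gamma\mathcal{G}$ iff its minimal length is $0$, $w=1$ iff the terminal word is empty. Conversely each elementary step preserves the element of $\Gamma\mathcal{G}$ represented, so correctness in the other direction is immediate.

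The one subtlety I expect to be the main point to get right — rather than a genuine obstacle — is confirming that the terminal word is \emph{actually} graphically reduced, i.e. that the two conditions in the definition hold. Non-triviality of every syllable is ensured by the deletion step (and the $n=1$, $s_1=1$ case is handled by outputting the empty word). The second condition is exactly the negation of ``a shuffle-and-merge pair exists'', so if the algorithm terminated it holds. One should be mildly careful that after a merge the new syllable $s_is_j$ may itself form a mergeable pair with some earlier or later syllable, but this is automatically caught by the next iteration of the loop, and termination still holds because each iteration reduces the syllable count. With these observations in place the corollary follows; I would phrase the write-up as: describe the algorithm, note each step is effective using solvability of the word problems in the $G_u$ and finiteness/decidability of adjacency in $\Gamma$, note it terminates, and conclude by Theorem \ref{thm:minlength} that it decides $w=1$.

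\begin{proof}
Fix for each $u\in V(\Gamma)$ a solution to the word problem in $G_u$ with respect to some generating set, so that we can decide whether a given word represents the identity of $G_u$ and, given two words, compute (a word representing) their product in $G_u$. Let $w=s_1\cdots s_n$ be a word written over $\bigcup_{G\in\mathcal{G}} G$, each syllable being given as a word in the generators of its vertex-group; we describe a procedure deciding whether $w=1$ in $\Gamma\mathcal{G}$.

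If some syllable $s_i$ equals $1$ in its vertex-group and $n\geq 2$, delete it to obtain the shorter word $s_1\cdots s_{i-1}s_{i+1}\cdots s_n$; if $n=1$ and $s_1=1$, the word is empty and we are done. Otherwise, search for indices $1\leq i<j\leq n$ such that $s_i$ and $s_j$ lie in a common vertex-group $G\in\mathcal{G}$ and every $s_k$ with $i<k<j$ lies in a vertex-group adjacent to $G$ in $\Gamma$; since only finitely many vertex-groups occur in $w$, adjacency is decidable, so such a search terminates. If such a pair is found, replace the two syllables $s_i,s_j$ by the single syllable $s_is_j$ computed in $G$, obtaining $s_1\cdots s_{i-1}(s_is_j)s_{i+1}\cdots s_{j-1}s_{j+1}\cdots s_n$. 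Repeat these two operations until neither applies.

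Each operation strictly decreases the number of syllables, so the procedure halts after at most $n$ steps, producing a word $w'$; moreover every operation is an equality in $\Gamma\mathcal{G}$, so $w'=w$ in $\Gamma\mathcal{G}$. By construction no syllable of $w'$ is trivial (unless $w'$ is empty) and there is no pair $i<j$ violating the second condition in the definition of graphically reduced, so $w'$ is graphically reduced. If $w'$ is empty, then $w=1$ in $\Gamma\mathcal{G}$. If $w'$ is non-empty, then by Theorem \ref{thm:minlength} it has minimal length with respect to the generating set $\bigcup_{G\in\mathcal{G}} G$, hence represents a non-trivial element, so $w\neq 1$ in $\Gamma\mathcal{G}$. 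Thus the procedure decides whether $w=1$, and $\Gamma\mathcal{G}$ has a solvable word problem.
\end{proof}
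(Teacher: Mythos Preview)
Your proof is correct and follows essentially the same approach as the paper: describe the shuffle-and-merge reduction algorithm, observe that each step is effective using the word problem in the vertex-groups, note termination by the decreasing syllable count, and conclude via Theorem~\ref{thm:minlength} that the terminal (graphically reduced) word is trivial iff empty. The only cosmetic difference is that the paper isolates the implication ``nonempty graphically reduced $\Rightarrow$ nontrivial'' as a separate Lemma~\ref{lem:WP} before invoking it, whereas you appeal to Theorem~\ref{thm:minlength} directly.
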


\noindent
Our corollary will be essentially deduce from the following easy consequence of Theorem~\ref{thm:minlength}:

\begin{lemma}\label{lem:WP}
Let $\Gamma$ be a simplicial graph and $\mathcal{G}$ a collection of groups indexed by $V(\Gamma)$. If $w=s_1 \cdots s_n$ is a nonempty word of length $n \geq 2$ written over $\bigcup\limits_{G \in \mathcal{G}} G$ which is equal to $1$ in $\Gamma \mathcal{G}$ then $w$ is not graphically reduced.
\end{lemma}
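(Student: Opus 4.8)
The plan is to derive Lemma \ref{lem:WP} directly from Theorem \ref{thm:minlength}. Suppose for contradiction that $w = s_1 \cdots s_n$ is a nonempty word with $n \geq 2$, equal to $1$ in $\Gamma\mathcal{G}$, and that $w$ is graphically reduced. Since $w$ is graphically reduced and $n \geq 2$, the definition forces $s_i \neq 1$ for every $1 \leq i \leq n$, so $w$ is actually a word over $\bigcup_{G \in \mathcal{G}} G \backslash \{1\}$. Theorem \ref{thm:minlength} then applies and tells us that $w$ has minimal length with respect to the generating set $\bigcup_{G \in \mathcal{G}} G$. But the empty word represents $1$ in $\Gamma\mathcal{G}$, so the element $1$ has a representative of length $0$, whence any word of minimal length representing $1$ must have length $0$ as well. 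This contradicts $n \geq 2$.

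Concretely, I would write: since $w$ is graphically reduced with $n \geq 2$, no syllable of $w$ is trivial, hence $w$ is written over $\bigcup_{G \in \mathcal{G}} G \backslash \{1\}$ and Theorem \ref{thm:minlength} gives that $w$ has minimal length among all words over $\bigcup_{G \in \mathcal{G}} G$ representing the same element of $\Gamma\mathcal{G}$. That element is $1$, which is represented by the empty word of length $0$; therefore the minimal length is $0$, contradicting $n \geq 2$. Consequently $w$ cannot be graphically reduced.

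There is essentially no obstacle here: the statement is a direct packaging of Theorem \ref{thm:minlength}, and the only mild subtlety is making sure the hypothesis "graphically reduced" together with $n \geq 2$ rules out trivial syllables, so that the word legitimately lies in the domain where Theorem \ref{thm:minlength} was stated (words over $\bigcup_{G \in \mathcal{G}} G \backslash \{1\}$). Once that bookkeeping is in place, the length comparison with the empty word finishes the argument in one line.
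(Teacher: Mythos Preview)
Your proposal is correct and matches the paper's own proof almost verbatim: both arguments observe that a graphically reduced word with $n\geq 2$ has no trivial syllables, then invoke Theorem~\ref{thm:minlength} together with the fact that the identity is represented by a word of length~$0$ to derive a contradiction. The only cosmetic difference is that the paper phrases it directly (if some $s_i=1$ we are done; otherwise $w$ is not of minimal length, hence not graphically reduced by Theorem~\ref{thm:minlength}) while you phrase it by contradiction.
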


\begin{proof}
If $s_i=1$ for some $1 \leq i \leq n$ then $w$ is not graphically reduced, so let us assume that $s_i \neq 1$ for every $1 \leq i  \leq n$. Clearly, because $w=1$ in $\Gamma \mathcal{G}$, the word $w$ does not have minimal length. It follows from Theorem \ref{thm:minlength} that $w$ is not graphically reduced. 
\end{proof}

\begin{proof}[Proof of Corollary \ref{cor:WP}.]
For every $u \in V(\Gamma)$, fix a presentation $\langle X_u \mid R_u \rangle$ of $G_u$ with a solvable word problem. Let $w$ be a word written over $\bigcup\limits_{u \in V(\Gamma)} X_u$. We think of $w$ as a word written over $\bigcup\limits_{u \in V(\Gamma)} G_u$. Iterate the following operations:
\begin{itemize}
	\item Use the algorithms available in vertex-groups to check if some syllables of $w$ are trivial. If so, remove them.
	\item If $w$ contains a subword of the form $a x_1 \cdots x_n b$ where $a$ and $b$ belongs to a common vertex-group which is adjacent to the vertex-groups containing the $x_i$'s, then replace $a x_1 \cdots x_n b_n$ with $(ab) x_1 \cdots x_n$.
\end{itemize}
Notice that the length (as a word over $\bigcup\limits_{u \in V(\Gamma)} G_u$) decreases if one of these two operations applies. As a consequence, after at most $\|w\|$ steps, where $\|w\|$ denotes the length of $w$, the process stops. If we get an empty word, then $w$ is trivial in $\Gamma \mathcal{G}$; and otherwise, the word we obtain must be graphically reduced, so that it follows from Lemma \ref{lem:WP} that $w$ represents a non-trivial element of $\Gamma \mathcal{G}$. 
\end{proof}

\noindent
In fact, in the proof of corollary \ref{cor:WP}, we have showed how to determine whether or not a word represents the trivial element of a graph product. This problem is equivalent to the word problem, but it is also possible to determine directly whether or not two words represent the same element in a graph product thanks to the following statement:

\begin{cor}\label{cor:graphicallyreduced}
Let $\Gamma$ be a simplicial graph and $\mathcal{G}$ a collection of groups indexed by $V(\Gamma)$. If $w_1$ and $w_2$ are two graphically reduced words which are equal in $\Gamma \mathcal{G}$, then it is possible to transform $w_1$ into $w_2$ by permuting successive syllables which belong to adjacent vertex-groups. 
\end{cor}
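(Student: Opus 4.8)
The plan is to produce a van Kampen diagram recording the equality $w_1 = w_2$ in $\Gamma\mathcal{G}$, and then use the dual-curve technology to read off the desired sequence of elementary permutations. Concretely, since $w_1$ and $w_2$ are equal in $\Gamma\mathcal{G}$, there is a van Kampen diagram $D$ over $\mathcal{Q}$ whose boundary is labelled by $w_1 w_2^{-1}$; write $\partial D = \sigma_1 \cup \sigma_2$ where $\sigma_1$ is labelled by $w_1$ and $\sigma_2$ by $w_2^{-1}$. The first key step is to invoke Claim~\ref{claim:graphicallyreduceddiagram}: since $w_1$ and $w_2$ are graphically reduced, each dual curve of $D$ crosses $\sigma_1$ at most once and $\sigma_2$ at most once. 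Combined with Corollary~\ref{cor:dualtree} (dual curves in van Kampen diagrams are trees or circles), and the observation that an edge of $\partial D$ forces the dual curve through it to be a tree with at least one boundary edge, we get that every dual curve meeting $\partial D$ meets it in exactly two edges — one in $\sigma_1$ and one in $\sigma_2$ (it cannot meet $\sigma_i$ twice, and a tree-type dual curve with a boundary edge must have a second boundary edge, namely another leaf). This sets up a bijection $\phi$ between the syllables of $w_1$ and the syllables of $w_2$: a syllable of $w_1$ labels an edge $e\subset\sigma_1$, the unique dual curve $\gamma_e$ through $e$ also crosses a unique edge $f\subset\sigma_2$, and $\phi$ sends the syllable at $e$ to the syllable at $f$. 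Moreover, by Lemma~\ref{lem:labelhyp} matched syllables lie in the same vertex-group, and one should check (using Lemma~\ref{lem:labelarc}, tracking orientations along the regular dual curve) that they are in fact the \emph{same} element of that vertex-group.

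The second, combinatorial step is to argue that $w_2$ is obtained from $w_1$ by syllable permutations. Here I would argue by induction on the number of syllables, or equivalently on the number of 2-cells of $D$ (after applying elementary moves to make $D$ as simple as possible). The idea: look at the first syllable $s_1$ of $w_1$, sitting at the initial edge $e_1$ of $\sigma_1$, and follow its dual curve $\gamma_1$ across $D$ to the edge $f$ of $\sigma_2$ it crosses, say carrying the $k$-th syllable $t_k$ of $w_2$. Every dual curve that separates $f$ from the start of $\sigma_2$ must cross $\gamma_1$ (since by the bijection it has to reach $\sigma_1$, and $\gamma_1$ together with an initial arc of $\sigma_1$ bounds a subdisc); hence by Lemma~\ref{lem:labeltransverse} the syllables $t_1,\dots,t_{k-1}$ all lie in vertex-groups adjacent to the one containing $t_k = s_1$. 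Thus $t_k$ can be moved to the front of $w_2$ by permutations of successive syllables in adjacent vertex-groups; having done so, one peels off this matched syllable from both words — combinatorially this corresponds to removing the dual curve $\gamma_1$ from $D$ (it is a tree, so its removal leaves a van Kampen diagram, possibly after a square/triangle clean-up) — and applies the induction hypothesis to the remaining diagram whose boundary is labelled by $(s_2\cdots s_n)(t_1\cdots \widehat{t_k}\cdots t_m)^{-1}$.

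The main obstacle I expect is the bookkeeping in this inductive peeling: one must verify that after removing the dual curve $\gamma_1$, together with the squares and triangles it passes through, what remains is genuinely a van Kampen diagram over $\mathcal{Q}$ (so that the induction hypothesis applies), and that the boundary word of this smaller diagram really is the pair of subwords claimed, with the two matched syllables deleted. This is where the elementary moves (pentagonal moves and flips, as used in Proposition~\ref{prop:circleindiag}) and the structure of regular dual curves as ``ladders'' of squares need to be used carefully; in particular one should first normalise so that $\gamma_1$ is regular (it is, since it meets $\partial D$ and $w_i$ are graphically reduced, so no triangle issue arises) and then cut along it. An alternative to the diagram-surgery, which avoids this obstacle, is purely word-combinatorial: use the bijection $\phi$ plus transversality constraints to show directly that the permutation underlying $\phi$ (read as a permutation of positions) is achievable by adjacent transpositions respecting the ``commutation graph'' on the syllables — i.e.\ invoke the standard fact that two words over a trace monoid are equal iff related by such swaps, with $\phi$ and the transversality data supplying exactly the needed partial-order information. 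I would present the diagram-surgery proof as the main line and remark that it amounts to this trace-monoid statement.
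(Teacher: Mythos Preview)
Your proof is correct in outline, and the setup (using Claim~\ref{claim:graphicallyreduceddiagram} to show every dual curve is a regular arc joining $\sigma_1$ to $\sigma_2$, hence a bijection between syllables) is exactly what the paper does. But from that point on you and the paper diverge.

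You argue inductively: follow the dual curve $\gamma_1$ of the first syllable $s_1$ to its partner $t_k$, observe that the dual curves of $t_1,\ldots,t_{k-1}$ must all cross $\gamma_1$ (hence lie in vertex-groups adjacent to $t_k$'s), shuffle $t_k$ to the front, peel it off, and repeat. The diagram-surgery step you flag as an obstacle is real but avoidable: as you yourself note at the end, the peeling can be carried out purely at the level of words---once $s_1 = t_k$ is stripped from both sides, the remaining words are still graphically reduced and still equal in $\Gamma\mathcal{G}$, so one may simply build a fresh van Kampen diagram for them and iterate. So your argument works, but you have correctly identified that the inductive bookkeeping is the awkward part.

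The paper sidesteps this entirely with a sweeping argument. Having established that all dual curves are regular arcs from $\sigma_1$ to $\sigma_2$, it chooses arcs $\alpha_0=\sigma_1,\alpha_1,\ldots,\alpha_k,\alpha_{k+1}=\sigma_2$ in $D$ joining the two common endpoints of $\sigma_1$ and $\sigma_2$, each transverse to the dual curves, with exactly one crossing of two dual curves in the region between $\alpha_i$ and $\alpha_{i+1}$. Reading the labels of the dual curves along $\alpha_i$ gives a word $m_i$; then $m_0=w_1$, $m_{k+1}=w_2$, and $m_{i+1}$ differs from $m_i$ by a single swap of two consecutive syllables whose dual curves are transverse, hence (Lemma~\ref{lem:labeltransverse}) lie in adjacent vertex-groups. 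This produces the entire chain of transpositions at once, with no induction and no surgery. Your approach has the virtue of making the matching of syllables explicit; the paper's has the virtue of putting the transpositions in one-to-one correspondence with the intersection points of dual curves, which is both shorter and conceptually cleaner.
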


\begin{proof}
Since $w_1w_2^{-1}$ equals $1$ in $\Gamma \mathcal{G}$, there exists a van Kampen diagram $D$ whose boundary is labelled by $w_1w_2^{-1}$. Let $\sigma_1$ and $\sigma_2$ denote the two subsegments of $\partial D$ labelled by $w_1$ and $w_2^{-1}$ respectively. As a consequence of Claim \ref{claim:graphicallyreduceddiagram}, a dual curve of $D$ cannot intersect twice $\sigma_1$ or $\sigma_2$, so dual curves of $D$ are all regular. Let $\alpha_1, \ldots, \alpha_k$ be segments in $D$ between the common endpoints of $\sigma_1$ and $\sigma_2$ such that
\begin{itemize}
	\item each $\alpha_i$ intersects the dual curves of $D$ transversely;
	\item the subspace delimited by $\alpha_i$ and $\alpha_{i+1}$ contains exactly one intersection point between two dual curves.
\end{itemize}
See Figure \ref{word}. For convenience, set $\alpha_0= \sigma_1$ and $\alpha_{k+1}=\sigma_2$. For every $0 \leq i \leq k+1$, let $m_i$ denote the word obtained by reading the labels of the dual curves intersecting $\alpha_i$. In particular, $m_0=w_1$ and $m_{k+1}=w_2$. For every $0 \leq i \leq k$, the word $m_{i+1}$ is obtained from $m_i$ by permuting two consecutive letters corresponding to two transverse dual curves. It follows from Lemma \ref{lem:labeltransverse} that $m_{i+1}$ is obtained from $m_i$ by permuting two syllables which belong to adjacent vertex-groups.
\begin{figure}
\begin{center}
\includegraphics[trim={0 19cm 38cm 0},clip,scale=0.5]{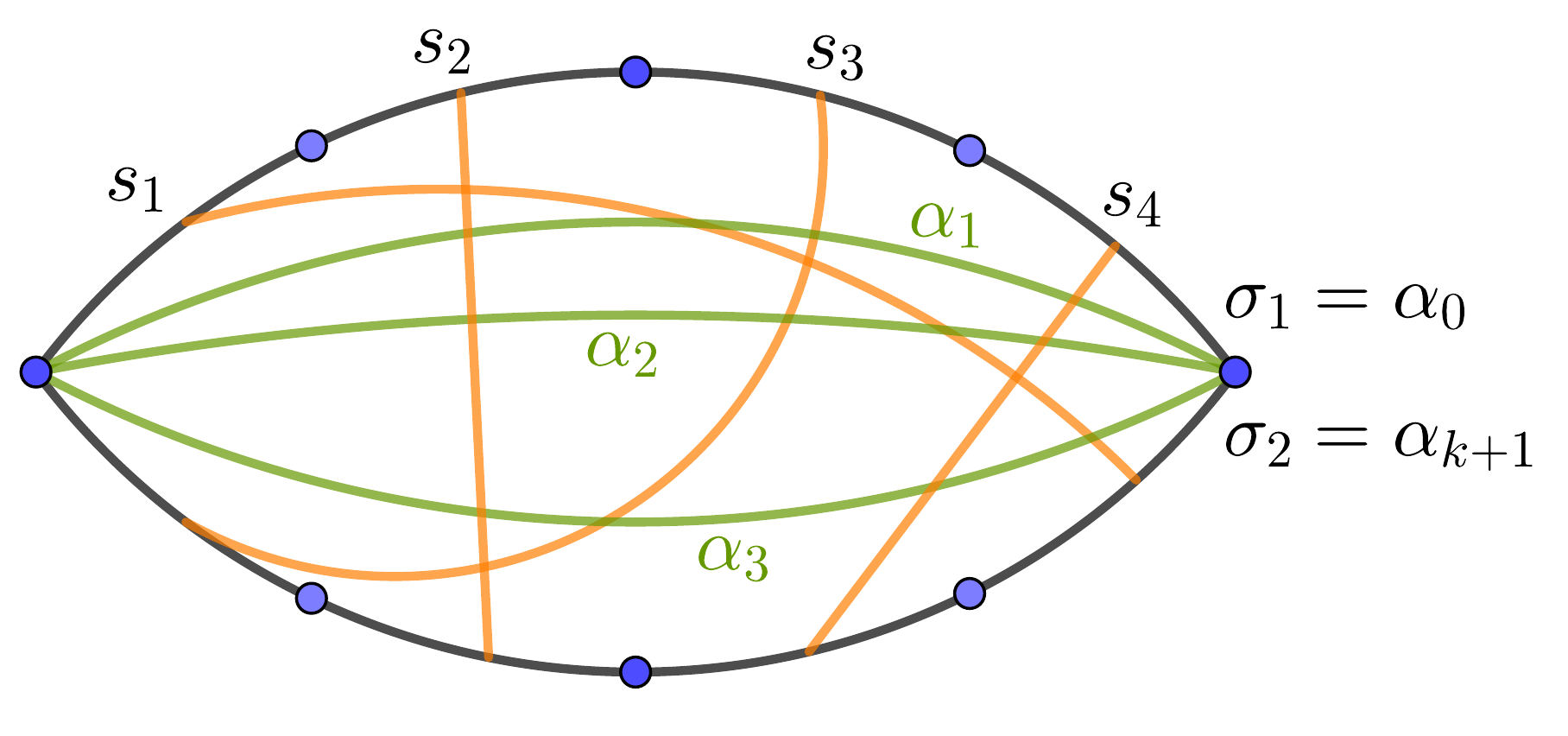}
\caption{$w_1= s_1s_2s_3s_4 \to s_2s_1s_3s_4 \to s_2s_3s_1s_4 \to s_2s_3s_4s_1 \to s_3s_2s_4s_1=w_2$.}
\label{word}
\end{center}
\end{figure}

\end{proof}

\begin{remark}
As a consequence of the uniqueness provided by Corollary \ref{cor:graphicallyreduced}, we will occasionally identify an element of a graph product with one of the graphically reduced words which represent it. For instance, in Section \ref{section:conj}, a graphically cyclically reduced element will refer to an element all of whose graphically reduced words which represent it are graphically cyclically reduced.
\end{remark}

\noindent
We conclude this section with a last consequence of Theorem \ref{thm:minlength}, which will also be useful in the next section. In the next statement, if $\Lambda$ is a subgraph of $\Gamma$, then $\langle \Lambda \rangle$ denotes the subgroup generated by the vertex-groups labelling the vertices of $\Lambda$.

\begin{cor}\label{cor:parabolic}
Let $\Gamma$ be a simplicial graph and $\mathcal{G}$ a collection of groups indexed by $V(\Gamma)$. If $\Lambda \subset \Gamma$ is an induced subgraph and if $\mathcal{H}$ denotes the subcollection $\{ G_u \mid u \in V(\Gamma) \} \subset \mathcal{G}$, then the canonical map $\Lambda \mathcal{H} \to \langle \Lambda \rangle$ induces an isomorphism. Moreover, if $\Gamma$ is finite and if all the vertex-groups are finitely generated, then the subgroup $\langle \Lambda \rangle$ is quasi-isometrically embedded. 
\end{cor}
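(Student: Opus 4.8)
The plan is to treat the two assertions separately. For the isomorphism $\Lambda\mathcal{H}\to\langle\Lambda\rangle$ I would use the natural retraction of a graph product onto a full parabolic subgroup (or, alternatively, the normal form of Theorem \ref{thm:minlength}); for the quasi-isometric embedding I would show that this retraction is $1$-Lipschitz for suitable finite generating sets, which gives even an isometric embedding.

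First, for the isomorphism. Surjectivity of the canonical map $\iota\colon\Lambda\mathcal{H}\to\langle\Lambda\rangle$ is immediate from the definition of $\langle\Lambda\rangle$, so only injectivity needs an argument. Define a homomorphism $\rho\colon\Gamma\mathcal{G}\to\Lambda\mathcal{H}$ by sending $G_u$ identically onto $G_u\leq\Lambda\mathcal{H}$ for every $u\in V(\Lambda)$ and $G_u$ to $1$ for every $u\notin V(\Lambda)$; this is well defined because each defining commutation relation $[g,h]=1$ of $\Gamma\mathcal{G}$, with $g\in G_u$, $h\in G_v$ and $\{u,v\}\in E(\Gamma)$, is sent either to $[1,\cdot]=1$ or, when $u,v\in V(\Lambda)$, to $[g,h]=1$, which holds in $\Lambda\mathcal{H}$ precisely because $\Lambda$ is an \emph{induced} subgraph (so $\{u,v\}\in E(\Lambda)$). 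Since $\rho\circ\iota$ is the identity on each $G_u$ with $u\in V(\Lambda)$ and these groups generate $\Lambda\mathcal{H}$, the map $\rho$ is a left inverse of $\iota$, so $\iota$ is injective. (A more ``geometric'' variant: a nontrivial element of $\Lambda\mathcal{H}$ is represented by a graphically reduced word with nontrivial syllables over $\bigcup_{u\in V(\Lambda)}G_u$, which remains graphically reduced over $\bigcup_{G\in\mathcal G}G$ because adjacency in $\Lambda$ and in $\Gamma$ agree on $V(\Lambda)$; by Theorem \ref{thm:minlength} this word has positive length in $\Gamma\mathcal{G}$, hence is nontrivial there.)

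Next, for the quasi-isometric embedding. Fix finite generating sets $S_u$ of the vertex-groups, and put $S=\bigcup_{u\in V(\Gamma)}S_u$, which is finite since $\Gamma$ is finite, and $S_\Lambda=\bigcup_{u\in V(\Lambda)}S_u$; then $S$ generates $\Gamma\mathcal{G}$ and $S_\Lambda$ generates $\langle\Lambda\rangle\cong\Lambda\mathcal{H}$. As $S_\Lambda\subset S$, the inclusion is $1$-Lipschitz, i.e. $\|g\|_S\leq\|g\|_{S_\Lambda}$ for every $g\in\langle\Lambda\rangle$. Conversely, the retraction $\rho$ above carries each element of $S^{\pm}$ either into $S_\Lambda^{\pm}$ or to $1$, hence is $1$-Lipschitz from $(\Gamma\mathcal{G},S)$ to $(\Lambda\mathcal{H},S_\Lambda)$; applied to $g\in\langle\Lambda\rangle$, for which $\rho(g)=g$, it yields $\|g\|_{S_\Lambda}\leq\|g\|_S$. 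Therefore $\langle\Lambda\rangle$ is isometrically embedded in $\Gamma\mathcal{G}$, which is stronger than the claimed quasi-isometric embedding.

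Since all arguments are short, there is no real obstacle here; the only point needing care — and the only place the hypothesis that $\Lambda$ be induced enters — is ensuring that the graphically-reduced conventions for $(\Lambda,\mathcal H)$ and $(\Gamma,\mathcal G)$ coincide, equivalently that $\rho$ genuinely lands in $\Lambda\mathcal{H}$. One should also keep in mind that finiteness of $\Gamma$ is used to know $S$ is a finite generating set (and finite generation of the vertex-groups to have the $S_u$ available), without which the second statement would not even be well-posed.
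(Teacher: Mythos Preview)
Your argument is correct. For the isomorphism you even give both routes: the retraction $\rho$ and, parenthetically, the normal-form argument. For the quasi-isometric embedding your retraction argument is clean and complete.

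The paper takes a different path. It proves injectivity exactly via your parenthetical alternative (graphically reduced words in $\Lambda\mathcal H$ stay graphically reduced in $\Gamma\mathcal G$, then invoke Theorem~\ref{thm:minlength}). For the metric statement, instead of using the retraction, the paper establishes an intermediate claim: for any graphically reduced word $s_1\cdots s_n$ in $\Gamma\mathcal G$ one has $\|s_1\cdots s_n\|_X=\sum_i\|s_i\|_{X_{u_i}}$, proved by reducing an $X$-geodesic to a graphically reduced word and then matching syllables via Corollary~\ref{cor:graphicallyreduced}. From this it reads off that $\Lambda\mathcal H\hookrightarrow\Gamma\mathcal G$ is isometric. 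Your approach is more elementary and entirely self-contained---it never touches the normal form machinery---while the paper's detour through the length formula is longer but yields a statement (the syllable-wise length decomposition) that is reused later, e.g.\ in the estimates of Section~\ref{section:CLF}.
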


\begin{proof}
The map $\Lambda \mathcal{H} \to \langle \Lambda \rangle$ clearly induces a morphism and is surjective. Moreover, it sends a graphically reduced word in $\Lambda \mathcal{H}$ to a graphically reduced word in $\Gamma \mathcal{G}$, so that it follows from Theorem \ref{thm:minlength} that this map must be injective. Thus, we have proved that our map induces an isomorphism.

\medskip \noindent
Now, suppose that $\Gamma$ is finite and that all the vertex-groups are finitely generated. For every $u \in V(\Gamma)$, fix a finite generating set $X_u$ of $G_u$, and set $X= \bigcup\limits_{u \in V(\Gamma)} X_u$. Notice that $X$ is finite generating set of $\Gamma \mathcal{G}$.

\begin{claim}\label{claim:length}
For every graphically reduced word $s_1 \cdots s_n$, one has
$$\| s_1 \cdots s_n \|_{X} = \sum\limits_{i=1}^n \|s_i\|_{X_{u_i}}$$
where $u_i$ is the vertex of $\Gamma$ such that $s_i \in G_{u_i}$ for every $1 \leq i \leq n$.
\end{claim}

\noindent
Let $r_1 \cdots r_m$ be a word written over $X$ which represents $s_1 \cdots s_n$ and which has minimal length. Now, think of $r_1 \cdots r_m$ as a word written over $\bigcup\limits_{u \in V(\Gamma)} G_u$. Apply iteratively the following operation: if there exist $1 \leq i <j \leq m$ such that $r_i$ and $r_j$ belong to the same vertex-group, which is adjacent to the vertex-group containing $r_k$ for every $i < k < j$. Eventually we obtain a graphically reduced word $p_1 \cdots p_k$ written over $\bigcup\limits_{u \in V(\Gamma)} G_u$ such that $\|s_1 \cdots s_n \| = \sum\limits_{i=1}^k \|p_i\|$. As a consequence of Corollary \ref{cor:graphicallyreduced}, there exists a bijection $\sigma : \{ 1, \ldots, k\} \to \{1, \ldots, n\}$ such that $p_i = s_{\sigma(i)}$ for every $1 \leq i \leq k$. Therefore,
$$\|s_1 \cdots s_n\| = \sum\limits_{i=1}^k \|p_i \| = \sum\limits_{i=1}^k \| s_{\sigma(i)} \| = \sum\limits_{i=1}^n \| s_i \|,$$
concluding the proof of our claim. 

\medskip \noindent
Set $Y= \bigcup\limits_{u \in V(\Lambda)} X_u$. Since the map $\Lambda \mathcal{H} \to \langle \Lambda \rangle$ sends a graphically reduced word of $\Lambda \mathcal{H}$ to a graphically reduced word of $\Gamma \mathcal{G}$, it follows from Claim \ref{claim:length} that the map $(\Lambda \mathcal{H}, \|\cdot\|_Y) \hookrightarrow (\Gamma \mathcal{G},\|\cdot\|)$ induced by $\Lambda \mathcal{H} \overset{\sim}{\longrightarrow} \langle \Lambda \rangle \hookrightarrow \Gamma \mathcal{G}$ is an isometric embedding. Therefore, the subgroup $\langle \Lambda \rangle$ is quasi-isometrically embedded. 
\end{proof}

\section{Dehn functions}\label{section:Dehn}

\noindent
In this section, our goal is to compute the Dehn function of a graph product of finitely presented groups. We begin by recalling basic definitions related to Dehn functions.

\begin{definition}
Let $\mathcal{P}= \langle X \mid R \rangle$ be a finite group presentation. If $w$ is a word written over $X$, the \emph{area} of $w$ is the smallest possible number of $2$-cells of a van Kampen diagram over $\mathcal{P}$ whose boundary is labelled by $w$. The \emph{Dehn function} of $\mathcal{P}$ is
$$\delta_{\mathcal{P}} : n \mapsto \max \{ \mathrm{Area}(w) \mid \|w\| \leq n \}.$$
\end{definition}

\noindent
But this defines the Dehn function of a group presentation and not of the group itself. Moreover, the Dehn function really depends on the presentation we choose. In order to remove this dependence, we introduce a specific equivalence relation between functions. 

\begin{definition}\label{def:equivalentfunctions}
Let $f,g : \mathbb{N} \to \mathbb{N}$ be two functions. We write $f \prec g$ if there exists a constant $A>0$ such that
$$f(n) \leq A \cdot g(An+A)+An+A$$
for every $n \geq 0$. The functions are \emph{equivalent}, written $f \sim g$, if $f \prec g$ and $g \prec f$. 
\end{definition}

\noindent
Now, a classical result states that the Dehn functions of all the finite presentations of a given group are pairwise equivalent. Namely:

\begin{prop}
Let $\mathcal{P}_1$ and $\mathcal{P}_2$ be two finite presentations of a group $G$, then $\delta_{\mathcal{P}_1} \sim~\delta_{\mathcal{P}_2}$. 
\end{prop}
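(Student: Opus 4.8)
The plan is to prove the asymmetric estimate $\delta_{\mathcal{P}_1} \prec \delta_{\mathcal{P}_2}$; exchanging the roles of $\mathcal{P}_1$ and $\mathcal{P}_2$ then gives $\delta_{\mathcal{P}_2} \prec \delta_{\mathcal{P}_1}$, hence $\delta_{\mathcal{P}_1} \sim \delta_{\mathcal{P}_2}$. Write $\mathcal{P}_i= \langle X_i \mid R_i \rangle$. As both presentations define $G$, fix for each generator $x \in X_1$ a word $\phi(x)$ over $X_2^{\pm}$ representing the same element of $G$, and for each $y \in X_2$ a word $\psi(y)$ over $X_1^{\pm}$ representing the same element of $G$; extend $\phi$ and $\psi$ to arbitrary words by concatenation, setting $\phi(x^{-1})=\phi(x)^{-1}$ and $\psi(y^{-1})=\psi(y)^{-1}$. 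Since $X_1$ is finite, the constant $\lambda := \max_{x \in X_1} \|\phi(x)\|$ is finite and $\|\phi(w)\| \leq \lambda \|w\|$ for every word $w$ over $X_1^{\pm}$. Since $X_1$ and $R_2$ are finite and the words $\psi(\phi(x)) x^{-1}$ (for $x \in X_1$) and $\psi(r)$ (for $r \in R_2$) all represent $1$ in $G$, the constants
$$C_1 := \max_{x \in X_1} \mathrm{Area}_{\mathcal{P}_1}\left( \psi(\phi(x)) x^{-1} \right), \qquad C_2 := \max_{r \in R_2} \mathrm{Area}_{\mathcal{P}_1}\left( \psi(r) \right)$$
are also finite.

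Now fix a word $w$ over $X_1^{\pm}$ which represents $1$ in $G$, with $\|w\| = n$. First I would produce a van Kampen diagram over $\mathcal{P}_2$ whose boundary is labelled by $\phi(w)$: since $\phi(w)$ represents $1$ in $G$ and $\|\phi(w)\| \leq \lambda n$, there is such a diagram $\Delta$ with at most $\delta_{\mathcal{P}_2}(\lambda n)$ $2$-cells. Next I would push $\Delta$ through $\psi$: replace each edge of $\Delta$ labelled by a generator $y \in X_2$ by a subdivided edge-path labelled by $\psi(y)$, turning $\Delta$ into a finite planar simply connected $2$-complex whose boundary cycle reads $\psi(\phi(w))$ and such that the boundary of each region coming from a $2$-cell of $\Delta$ reads a cyclic conjugate of $\psi(r)^{\pm 1}$ for some $r \in R_2$, hence represents $1$ in $G$. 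Filling each such region by a van Kampen diagram over $\mathcal{P}_1$ of area at most $C_2$ (using invariance of area under cyclic conjugation and inversion of the boundary word) yields a genuine van Kampen diagram $\Delta'$ over $\mathcal{P}_1$ with boundary $\psi(\phi(w))$ and at most $C_2 \cdot \delta_{\mathcal{P}_2}(\lambda n)$ $2$-cells. Finally I would repair the boundary: writing $w = s_1 \cdots s_n$ with each $s_i \in X_1^{\pm}$, glue along $\partial \Delta'$, next to the subpath produced from the letter $s_i$, a van Kampen diagram over $\mathcal{P}_1$ with boundary $\psi(\phi(s_i)) s_i^{-1}$ and area at most $C_1$; this replaces the boundary word $\psi(\phi(w))$ by $w$ and adds at most $C_1 n$ $2$-cells.

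Altogether this constructs a van Kampen diagram over $\mathcal{P}_1$ with boundary $w$ and at most $C_2 \cdot \delta_{\mathcal{P}_2}(\lambda n) + C_1 n$ $2$-cells, so $\delta_{\mathcal{P}_1}(n) \leq C_2 \cdot \delta_{\mathcal{P}_2}(\lambda n) + C_1 n$ for every $n$; taking $A := \max(\lambda, C_1, C_2, 1)$ gives $\delta_{\mathcal{P}_1} \prec \delta_{\mathcal{P}_2}$, and symmetry finishes the proof. The routine parts are the finiteness of $\lambda$, $C_1$, $C_2$ and the length bookkeeping. The step demanding the most care is the diagram surgery of the middle paragraph: subdividing the edges along $\psi$, then gluing in the filling diagrams and the boundary-correcting diagrams so that the result is again a planar simply connected $2$-complex realising the desired boundary cycle. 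This is a standard but slightly delicate cut-and-paste, which one could alternatively avoid by factoring the passage from $\mathcal{P}_1$ to $\mathcal{P}_2$ through a finite chain of elementary Tietze transformations and checking that each of the four types changes the Dehn function only up to $\sim$.
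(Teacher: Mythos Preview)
The paper does not actually prove this proposition; it simply states it and refers the reader to \cite{AlonsoDehn} for a proof. Your argument is the standard one found in the literature (including essentially in Alonso's paper and in textbook treatments such as Bridson's survey): translate a word via $\phi$, fill over $\mathcal{P}_2$, push the diagram back through $\psi$, refill the cells over $\mathcal{P}_1$, and patch the boundary with the $\psi(\phi(x))x^{-1}$ diagrams. The bookkeeping and the final inequality $\delta_{\mathcal{P}_1}(n) \leq C_2 \cdot \delta_{\mathcal{P}_2}(\lambda n) + C_1 n$ are correct and fit the paper's definition of $\prec$. Your closing remark about factoring through Tietze transformations is also a legitimate alternative route. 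In short, there is nothing to compare against in the paper itself, and your proof is correct.
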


\noindent
We refer to \cite{AlonsoDehn} for a proof of this statement. This proposition justifies the following definition:

\begin{definition}
The \emph{Dehn function} of a finitely presented group $G$ is the equivalence class of the Dehn functions of its finite presentations, written $\delta_G$.
\end{definition}

\noindent
If a group $G$ has a fixed finite presentation $\mathcal{P}$, then $\delta_G$ and $\delta_{\mathcal{P}}$ may be identified for convenience. 

\medskip \noindent
So, given a finite simplicial graph $\Gamma$ and a collection of finitely presented groups $\mathcal{G}$ indexed by $V(\Gamma)$, we want to identify the equivalence class of functions containing the Dehn functions of the finite presentations of $\Gamma \mathcal{G}$. However, the answer depends on $\Gamma$ and $\mathcal{G}$, which is why we introduce the following definition:

\begin{definition}
Let $\Gamma$ be a simplicial graph and $\mathcal{G}$ a collection of groups indexed by $V(\Gamma)$. The couple $(\Gamma, \mathcal{G})$ satisfies \emph{Meier's condition} if:
\begin{itemize}
	\item $\Gamma$ is square-free;
	\item no two infinite vertex-groups are adjacent;
	\item and the link of a vertex labelled by an infinite vertex-group is complete.
\end{itemize}
\end{definition}

\noindent
From now on, we fix a finite presentation $\mathcal{P}_G= \langle X_G \mid \mathcal{R}_G \rangle$ for each $G \in \mathcal{G}$, and we denote by $\delta_G$ the corresponding Dehn function. The Dehn function $\delta_{\Gamma \mathcal{G}}$ corresponds to the presentation 
$$\mathcal{P}= \left\langle \bigcup\limits_{G \in \mathcal{G}} X_G \ \left| \ \bigcup\limits_{G \in \mathcal{G}} \mathcal{R}_G, \ [x,y]=1 \ \text{if $x \in X_{G_1}$, $y \in X_{G_2}$ and $G_1,G_2 \in \mathcal{G}$ adjacent} \right. \right\rangle.$$ 
In order to state the main result of this section, the following notation will be needed. For every vertex group $G \in \mathcal{G}$, let $\widetilde{\delta_G}$ denote the Dehn function $\delta_G$ if the vertex associated to $G$ is adjacent to all the other vertices of $\Gamma$, or the the \emph{subnegative closure} $\overline{\delta_G}$ of $\delta_G$ otherwise. Given a function $f : \mathbb{N} \to \mathbb{N}$, its \emph{subnegative closure} $\overline{f}$ is defined as
$$\overline{f} : n \mapsto \max \left\{ \sum\limits_{i=1}^k f(n_i) \mid k \geq 1, \ \sum\limits_{i=1}^k n_i = n \right\}.$$
Notice that the subnegative closure of a function is always non-decreasing. Now we are ready to state our main result:

\begin{thm}\label{thm:Dehn}
Let $\Gamma$ be a finite simplicial graph and $\mathcal{G}$ a collection of finitely presented groups indexed by $V(\Gamma)$. 
\begin{itemize}
	\item If $\Gamma$ is a clique, then $\delta_{\Gamma \mathcal{G}} \sim \max( \delta_G, G \in \mathcal{G})$.
	\item If $(\Gamma, \mathcal{G})$ satisfies Meier's condition and is not a clique, then $$\delta_{\Gamma \mathcal{G}} \sim \max \left( n \mapsto n, \widetilde{\delta_G}, G \in \mathcal{G} \right)= \max \left( \overline{\delta_G}, G \in \mathcal{G} \right).$$
	\item If $(\Gamma, \mathcal{G})$ does not satisfy Meier's condition and is not a clique, then $$\delta_{\Gamma \mathcal{G}} \sim \max \left( n \mapsto n^2, \widetilde{\delta_G}, G \in \mathcal{G} \right).$$
\end{itemize}
\end{thm}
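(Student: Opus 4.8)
The plan is to treat the clique case separately and, for non-complete $\Gamma$, to establish the announced equivalence by a matching upper and lower bound, invoking Proposition \ref{prop:Dehn} (the van Kampen diagram reproof of \cite[Theorem~2.4]{MeierDehn}) for the upper bound and retractions onto parabolic subgroups for the lower bound; a short computation then shows that the two expressions in the second item agree up to $\sim$. If $\Gamma$ is a clique, then $\Gamma\mathcal{G}$ is the direct product of the vertex-groups and $\mathcal{P}$ is the standard presentation of a direct product, so I would just quote the classical Dehn-function estimate for direct products: any word equal to $1$ sorts, at quadratic cost, into a concatenation of subwords over the individual $X_{G_u}$'s, each of which is then filled inside the corresponding vertex-group, and conversely each $G_u$ is a retract (projection) of the product, so applying this retraction to a filling diagram — replacing each image of a defining cell by a fixed finite-area subdiagram — gives $\delta_{G_u}\preceq\delta_{\Gamma\mathcal{G}}$.

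For the upper bound when $\Gamma$ is not complete, I would start from a word $w$ over $\bigcup_{G}X_G$ with $w=1$ in $\Gamma\mathcal{G}$, group its maximal runs of letters from a common $X_{G_u}$ into syllables so as to view $w$ as a word $s_1\cdots s_m$ over $\bigcup_G G$ with $\sum_i\|s_i\|_{X_{G_{u_i}}}=n=\|w\|$ and $m\le n$, and apply Proposition \ref{prop:Dehn}. This provides a van Kampen diagram $D$ over $\mathcal{Q}$ with boundary $s_1\cdots s_m$ whose dual curves are trees (Corollary \ref{cor:dualtree}), whose squares number $O(n)$ if Meier's condition holds and $O(n^2)$ otherwise, and whose triangles are organised into ``vertex-group fans'', each fan recording a product computed inside a single vertex-group along which the syllables involved have total length $\le n$ — or are spread over several fans of bounded total length, precisely when the vertex in question is not adjacent to all the others, which is exactly what forces $\overline{\delta_{G_u}}$ in place of $\delta_{G_u}$. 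Refining $D$ over $\mathcal{P}$ — each square $[x_g,x_h]=1$ replaced by the grid of generator-commutations it decomposes into, each vertex-group fan replaced by a van Kampen diagram over $\mathcal{P}_{G_u}$ filling the relator it records — yields a van Kampen diagram over $\mathcal{P}$ with boundary $w$ of area $\preceq\max(n,\widetilde{\delta_G}:G)$ in the Meier case and $\preceq\max(n^2,\widetilde{\delta_G}:G)$ otherwise, using that $\Gamma$ is finite.

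For the lower bound, the point is that the parabolic subgroup $\langle\Lambda\rangle$ of any induced subgraph $\Lambda$ is a retract of $\Gamma\mathcal{G}$ (kill the other vertex-groups) and, by Corollary \ref{cor:parabolic}, is undistorted, hence $\delta_{\langle\Lambda\rangle}\preceq\delta_{\Gamma\mathcal{G}}$. Taking $\Lambda=\{u\}$ gives $\delta_{G_u}\preceq\delta_{\Gamma\mathcal{G}}$, and $\Lambda=\{u,v\}$ with $v$ a non-neighbour of $u$ gives $\overline{\delta_{G_u}}\preceq\delta_{G_u\ast G_v}\preceq\delta_{\Gamma\mathcal{G}}$ via the standard inequality $\overline{\delta_A}\preceq\delta_{A\ast B}$ (once insulated by a syllable of $B$, separate copies of $A$-relators cannot interact); so $\max(\widetilde{\delta_G}:G)\preceq\delta_{\Gamma\mathcal{G}}$ in every case. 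If $(\Gamma,\mathcal{G})$ fails Meier's condition then $\Gamma$ contains an induced square $abcd$, or two adjacent vertices carry infinite groups, or an infinite vertex-group $G_u$ has non-complete link (witnessed by $v,w$ adjacent to $u$ but not to each other); the corresponding parabolic subgroup — $(G_a\ast G_c)\times(G_b\ast G_d)$, $G_u\times G_v$, or $G_u\times(G_v\ast G_w)$ — is a direct product of two infinite groups, so it contains an infinite-order element whose centraliser is not virtually cyclic and is therefore not hyperbolic; by Gromov's theorem that a finitely presented group with subquadratic Dehn function is hyperbolic, its Dehn function is at least quadratic, and pushing this back along the retraction gives $n^2\preceq\delta_{\Gamma\mathcal{G}}$. (One can avoid the gap theorem by filling directly the words $[x^k,y^k]$, with $x,y$ commuting infinite-order elements of that subgroup, whose area is $\succeq k^2$ by the usual diagonal argument.)

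Finally, to see that the two formulas in the second item coincide I would check that $\overline f\sim\max(n,f)$ for any Dehn function $f$: the inequality $\overline f\succeq\max(n,f)$ is immediate, and $\overline f\preceq\max(n,f)$ holds because a finitely presented group's Dehn function is either $\sim n$ (so each $f(n_i)$ is $\preceq$ linear and the sum is $\preceq n$) or $\succeq n^2$ (so $f$ is superadditive up to $\sim$, whence $\sum_i f(n_i)\preceq f(\sum_i n_i)$). Then $\widetilde{\delta_G}\sim\max(n,\widetilde{\delta_G})\preceq\overline{\delta_G}$ for each $G$ and $\overline{\delta_G}\sim\max(n,\delta_G)\preceq\max(n,\widetilde{\delta_G}:G)$, so taking maxima over $G$ gives the claimed equality $\max(n,\widetilde{\delta_G}:G)\sim\max(\overline{\delta_G}:G)$. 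The hard part of the whole argument is Proposition \ref{prop:Dehn} itself: using the tree structure of dual curves both to bound the number of squares — Meier's square-freeness and adjacency hypotheses are exactly what forbids the ``$\mathbb{Z}^2$-grids'' that would otherwise force a quadratic count — and to organise the triangles into fans refinable over the $\mathcal{P}_{G_u}$ at the stated cost.
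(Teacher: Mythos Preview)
Your lower bounds are fine and essentially match the paper (you use retracts where the paper uses the quasi-isometric embedding of Corollary~\ref{cor:parabolic}, but either works), and your treatment of the non-Meier case via Proposition~\ref{prop:Dehn} is correct. However, there are two genuine gaps in the Meier case.

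\medskip
\textbf{Upper bound under Meier's condition.} You assert that Proposition~\ref{prop:Dehn} yields a van Kampen diagram over $\mathcal{Q}$ with $O(n)$ squares when Meier's condition holds. It does not: the proof of Proposition~\ref{prop:Dehn} only shows that the square subdiagram $\Delta_0$ has at most $p\leq n$ dual arcs pairwise crossing at most once, which gives an $O(n^2)$ bound on the number of squares and never invokes Meier's condition. Your claim that ``Meier's square-freeness and adjacency hypotheses are exactly what forbids the $\mathbb{Z}^2$-grids'' is a heuristic, not an argument; turning it into a proof would amount to reproving (via dual curves) that the relevant quasi-median complex is hyperbolic, which is not free. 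The paper avoids this entirely: under Meier's condition it decomposes $\Gamma\mathcal{G}$ iteratively as an amalgamated product over the finite subgroup $\langle\mathrm{link}(u)\rangle$ for each vertex $u$ carrying an infinite group, applies Brick's estimate for amalgams over finite subgroups (\cite[Proposition~3.2]{BrickDehn}) to get $\delta_{\Gamma\mathcal{G}}\prec\max(\overline{\delta_{\langle\Lambda\rangle}},\overline{\delta_{G_u}}:u\in I)$ where $\Lambda$ is spanned by the vertices with finite groups, and then uses that $\langle\Lambda\rangle$ is hyperbolic (Meier's characterisation \cite{MeierGP}) so that $\overline{\delta_{\langle\Lambda\rangle}}$ is linear.

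\medskip
\textbf{Equality of the two formulas.} Your argument that $\overline f\sim\max(n,f)$ for every Dehn function $f$ hinges on the implication ``$f\succeq n^2\Rightarrow f$ is superadditive up to $\sim$'', i.e.\ $\overline f\preceq f$. This is precisely the open question of whether every Dehn function is equivalent to a subnegative function (see the Remark following the proof in the paper); $f\succeq n^2$ alone does not force $\sum_i f(n_i)\preceq f(\sum_i n_i)$. The paper's argument is specific and elementary: under Meier's condition with $\Gamma$ not a clique, any vertex-group adjacent to all others must be \emph{finite} (otherwise its link would have to be complete and equal to $\Gamma\setminus\{u\}$, forcing $\Gamma$ to be a clique), so for such $G$ one has $\widetilde{\delta_G}=\delta_G$ bounded; the remaining groups already have $\widetilde{\delta_G}=\overline{\delta_G}$, and the equality follows.
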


\noindent
The key result to prove the theorem will be the following statement:

\begin{prop}\label{prop:Dehn}
Let $\Gamma$ be a simplicial graph and $\mathcal{G}$ a collection of finitely presented groups indexed by $V(\Gamma)$. Then
$$\delta_{\Gamma \mathcal{G}}(n) \leq \sum\limits_{G \in \mathcal{G}} \overline{\delta_G}(n) + n^2$$
for every $n \geq 0$. 
\end{prop}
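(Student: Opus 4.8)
The plan is to take a word $w$ over the generating set of $\mathcal{P}$ of length $\|w\| \leq n$ which represents the identity in $\Gamma\mathcal{G}$, and to bound the area of an optimal van Kampen diagram for it by first reducing to a diagram over the presentation $\mathcal{Q}$ and then analyzing that diagram via dual curves. First I would recall that a word over $\mathcal{P}$ representing $1$ can be converted, at a cost controlled by the $\delta_G$'s, into a word over the generators $\{x_g\}$ of $\mathcal{Q}$: each letter $x \in X_G$ becomes a single syllable, and every relator of $\mathcal{R}_G$ used in a van Kampen diagram over $\mathcal{P}$ can be filled by a diagram over $\mathcal{Q}$ using at most $\delta_G(\ell)$ triangle-cells, where $\ell$ is the length of that relator (a bounded constant). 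So up to adding a linear-in-$n$ number of cells and replacing $n$ by $Cn$ for a constant $C$, it suffices to bound the area of a van Kampen diagram $D$ over $\mathcal{Q}$ whose boundary is labelled by a word of length $O(n)$ over $\bigcup_G G\setminus\{1\}$, where the area is counted as (number of squares) $+$ (number of triangles).

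Next I would split the count into squares and triangles separately. By Corollary~\ref{cor:dualtree}, in a van Kampen diagram over $\mathcal{Q}$ every dual curve is a circle or a tree; since the boundary label has length $O(n)$, there are at most $O(n)$ dual curves, each crossing the boundary in a bounded way. Two distinct dual curves cross in at most one square (a dual curve cannot self-intersect, and two transverse curves meeting twice would bound a digon one could remove), so the number of squares is at most $\binom{O(n)}{2} = O(n^2)$; after possibly discarding trivial dual curves and reducing, this gives the $n^2$ term. For the triangles, the key point is that the triangles crossed by a single dual curve $\alpha$, read along $\alpha$ from its boundary edges inward, correspond to a word in the vertex-group $G$ labelling $\alpha$ which must multiply out appropriately; more precisely, collapsing all the squares and looking at the "trace" of $\alpha$ on the boundary, the triangles of $\alpha$ realize a relation $g_1 g_2 \cdots g_k = $ (boundary contribution) inside $G$, and the number of such triangles is at most $\delta_G$ of the total length of the $G$-syllables on the boundary that feed into $\alpha$. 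Summing over all dual curves grouped by their labelling vertex-group $G$, and using that the syllable-lengths coming from different dual curves with the same label partition (a portion of) the boundary length $\leq n$, the total triangle count is bounded by $\sum_{G \in \mathcal{G}} \overline{\delta_G}(n)$ — this is exactly where the subnegative closure appears, since one vertex-group may be responsible for several disjoint blocks of boundary summing to at most $n$.

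The main obstacle I expect is making the triangle-counting step rigorous: one needs to argue carefully that, after the square-reductions, the sub-diagram "carried" by a given dual curve (or by all dual curves with a fixed label $G$) can be interpreted as a genuine van Kampen diagram over the fixed presentation $\mathcal{P}_G$ of $G$, so that its triangle count can be replaced by $\delta_G$ of the appropriate boundary length. Concretely, one would cut $D$ along the dual curves, or equivalently use the elementary moves (flips, pentagonal moves, square-reductions of Figure~\ref{elementary}) to push all squares to one side and isolate, for each $G$, a diagram whose triangles come only from $\mathcal{Q}$-relations of the form $x_g x_h = x_{gh}$ with $g,h \in G$; such a diagram is, after converting syllables back to $X_G$-words and filling with $\mathcal{P}_G$-relators, controlled by $\delta_G$. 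Care is also needed because the same vertex-group may label several dual curves, which is precisely why $\overline{\delta_G}$ rather than $\delta_G$ is the right bound, and one must check the relevant boundary lengths $n_i$ indeed satisfy $\sum_i n_i \leq n$. Once these bookkeeping points are handled, adding the $O(n^2)$ squares, the $\sum_G \overline{\delta_G}(n)$ triangles, and the $O(n)$ cells from the initial $\mathcal{P} \to \mathcal{Q}$ conversion, and absorbing constants into the equivalence, yields $\delta_{\Gamma\mathcal{G}}(n) \leq \sum_{G\in\mathcal{G}} \overline{\delta_G}(n) + n^2$.
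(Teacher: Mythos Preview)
Your overall strategy --- build a $\mathcal{Q}$-diagram, separate squares from triangle regions, refill each piece by $\mathcal{P}$-cells --- is the paper's, but two points need correction. The opening conversion is unnecessary and costs you the exact inequality: a word of length $n$ over $\bigcup_G X_G$ \emph{is already} a word of length $n$ over the generators of $\mathcal{Q}$, so there is nothing to convert and no constant $C$ to absorb (the proposition asserts an exact bound, not merely an equivalence).

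The real gap is the square bound. In a $\mathcal{Q}$-diagram the dual curves are \emph{trees}, branching at every triangle, not simple arcs; two such trees can be transverse in many squares without forming any bigon removable by square-reductions, because the would-be bigon boundary passes through singularities. The paper's remedy is to first apply pentagonal moves to collect, for each non-circular dual curve $\gamma$, all of its triangles into a single disc $U_\gamma$ meeting $\partial D$ along an edge (this is the paper's ``Condition~$(\ast)$''). Only then is the complement $\Delta_0$ of the $U_\gamma$'s an honest square complex whose dual curves are arcs, so that the minimality argument from \cite{BigWise} gives at most one crossing per pair and hence the $n^2$ bound. The same device resolves the obstacle you flag for triangles: once Condition~$(\ast)$ holds, every edge of $\partial U_\gamma$ is joined by an arc through $\Delta_0$ to a \emph{distinct} syllable of $\partial D$, so $\sum_{\gamma \text{ labelled } G} |\partial U_\gamma| \le n$, and refilling each $U_\gamma$ over $\mathcal{P}_G$ costs at most $\overline{\delta_G}(n)$ in total. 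Note finally that it is the $\mathcal{P}_G$-area of the refilled disc that is bounded by $\delta_G$ of its boundary length, not the number of $\mathcal{Q}$-triangles, which is neither needed nor controlled by $\delta_G$.
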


\begin{proof}
We say that a van Kampen diagram $\Delta$ over $\mathcal{Q}$ satisfies \emph{Condition $(\ast)$} if, for every dual curve $\gamma$ of $\Delta$ which is not a circle, 
\begin{itemize}
	\item the union of all the triangles crossed by $\gamma$ is homeomorphic to a disc,
	\item and there exists at least one triangle crossed by $\gamma$ which intersects $\partial D$ along an edge.
\end{itemize}
Let $w$ be a word of length at most $n$ written over $\bigcup\limits_{G \in \mathcal{G}} X_G$. 

\begin{claim}
There exists a van Kampen diagram over $\mathcal{Q}$ which satisfies Condition $(\ast)$ and whose boundary is labelled by $w$.
\end{claim}

\noindent
Let $\Delta$ be a van Kampen diagram over $\mathcal{Q}$ whose boundary is labelled by $w$, and let $\gamma$ be a dual curve of $\Delta$ which is not a circle. According to Corollary \ref{cor:dualtree}, $\gamma$ is a tree. If $T$ is a triangle crossed by $\gamma$ such that the corresponding singularity of $\gamma$ is linked to $\partial \Delta$ by an arc of $\gamma$, then it is possible to apply pentagonal moves in order to make $T$ intersect $\partial \Delta$ along an edge. See Figure \ref{triangleboundary}. Consequently, without loss of generality, we suppose from now on that $T \cap \partial \Delta$ contains an edge.
\begin{figure}
\begin{center}
\includegraphics[trim={0 17cm 21cm 0},clip,scale=0.4]{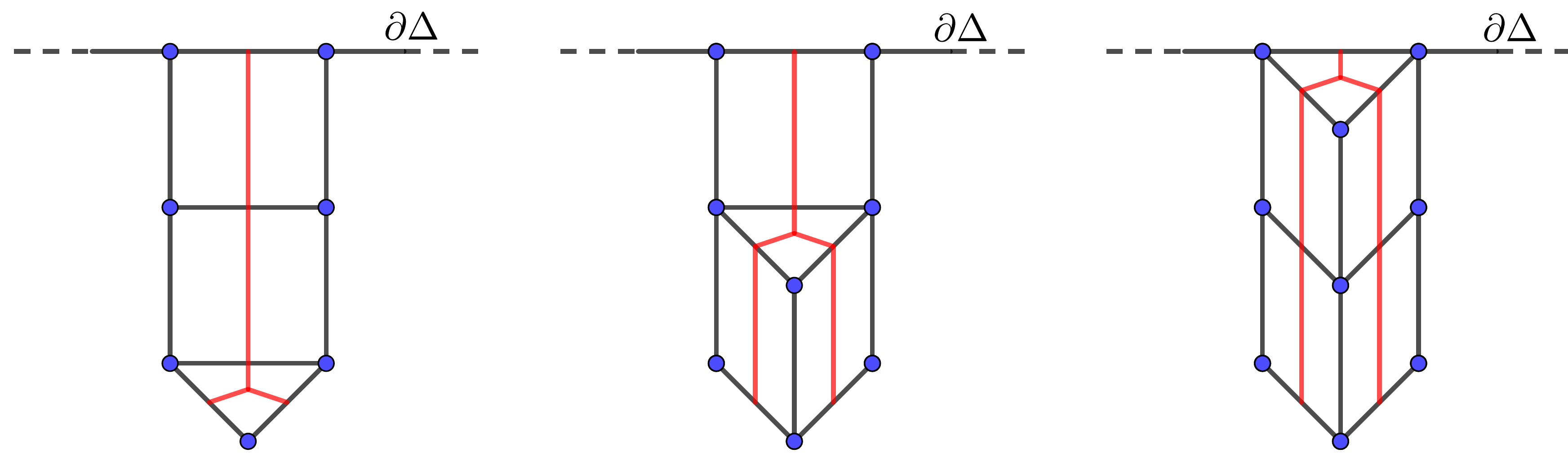}
\caption{Translating a triangle to the boundary.}
\label{triangleboundary}
\end{center}
\end{figure}

\medskip \noindent
Define the \emph{complexity of $\Delta$ with respect to $T$} as the sum of the numbers of squares crossed by the subsegments of $\gamma$ linking the singularity associated to $T$ to all the other singularities. If $\Delta$ has complexity $\geq 1$, then $\gamma$ contains an arc between two singularities which intersects at least one square. Let $T_1$ and $T_2$ denote the triangles corresponding to these two singularities. Without loss of generality, suppose that $T_1$ corresponds to the singularity which is the closest to the singularity associated to $T$ (with respect to the metric defined on the tree $\gamma$). By applying pentagonal moves, it is possible to translate $T_2$ to $T_1$ in order to make it intersect $T_1$ along an edge. See Figure \ref{triangletriangle}. Notice that the new diagram we get has lower complexity. Therefore, by iterating the process, it is possible to construct a new van Kampen diagram $\Delta'$ from $\Delta$ by pentagonal moves so that $\Delta'$ has complexity zero with respect to $T$. (Here, we identify the triangle $T$ of $\Delta$ with its image in $\Delta'$. This abuse of notation is justified by the fact that we did not move $T$ when applying our pentagonal moves.) Notice that, for any triangle $T'$ of $\Delta'$ crossed by the dual curve crossing $T$, there exists a sequence of triangles $T_0=T, T_1, \ldots, T_{n-1}, T_n=T'$ such that $T_i \cap T_{i+1}$ contains an edge for every $0 \leq i \leq n-1$. This observation implies that the union of all the triangles crossed by the dual curve crossing $T$ is contractible, and consequently it has to be homeomorphic to a disc. 
\begin{figure}
\begin{center}
\includegraphics[trim={0 16cm 37cm 0},clip,scale=0.5]{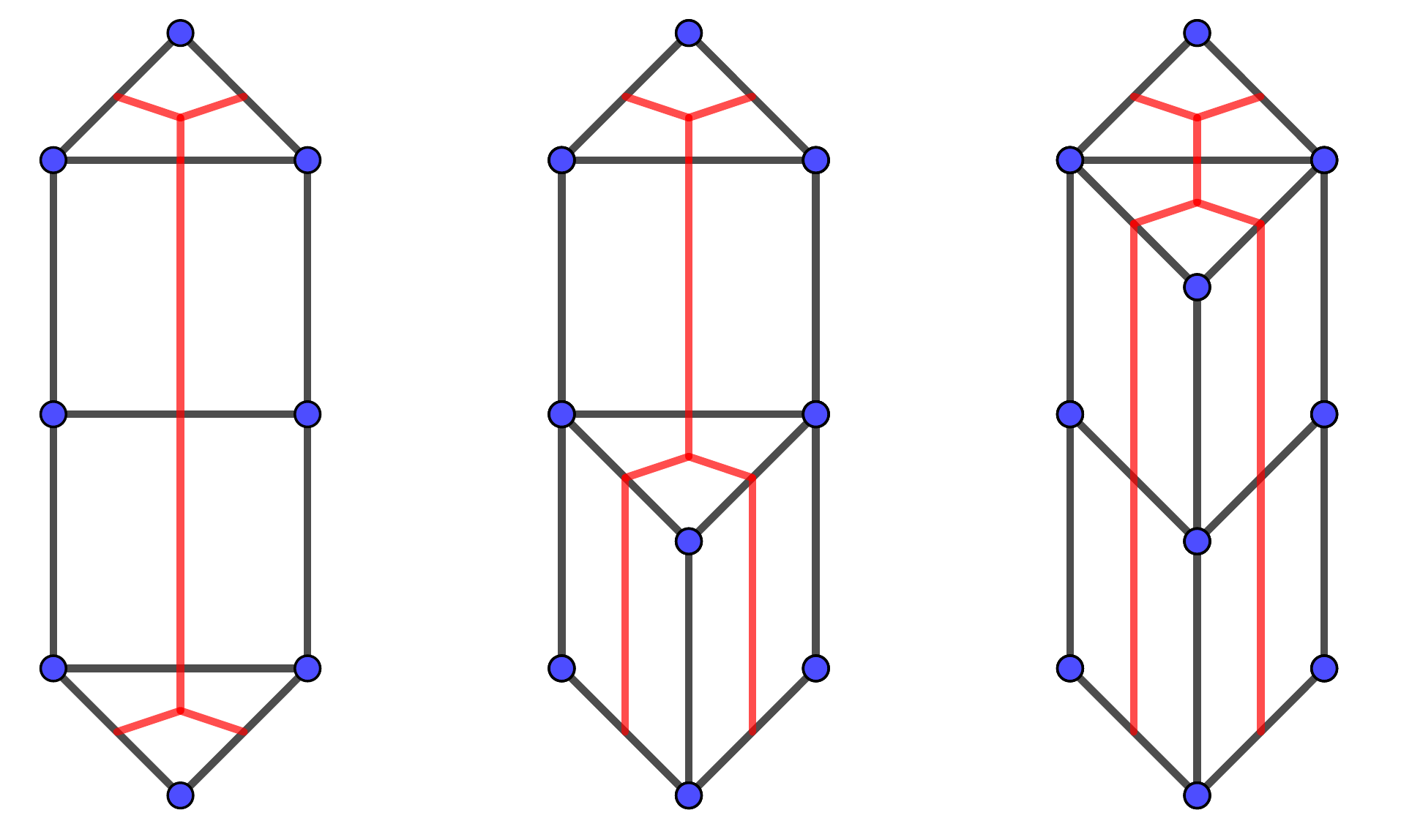}
\caption{Translating a triangle to another triangle.}
\label{triangletriangle}
\end{center}
\end{figure}

\medskip \noindent
Thus, we have constructed a new van Kampen diagram $\Delta'$ in which the dual curve corresponding to $\gamma$ satisfies the two conditions required by Condition $(\ast)$. Notice that, when constructing $\Delta'$ from $\Delta$, we did not move any triangle which is not crossed by $\gamma$. Consequently, we can iterate the process to another dual curve of $\Delta'$, and so on. The output of the construction is a van Kampen diagram whose boundary is labelled by $w$ and which satisfies Conditions $(\ast)$, concluding the proof of our claim.

\medskip \noindent
Let $\Delta$ be a van Kampen diagram whose boundary is labelled by $w$ and which has minimal area among all the diagrams satisfying Condition $(\ast)$. Let $\gamma_1, \ldots, \gamma_r$ denote the dual curves of $\Delta$; for every $1 \leq i \leq r$, let $U_i$ denote the union of all the triangles crossed by $\gamma_i$; and let $\Delta_0$ denote the union of all the squares of $\Delta$. See Figure \ref{fillin}. We construct a van Kampen diagram $\Delta^+$ over $\mathcal{P}$ whose boundary is labelled by $w$ from $\Delta$ by replacing each $U_i$ by a van Kampen diagram over $\mathcal{P}_G$ of minimal area with the same boundary (where $G$ is the vertex-group labelling the edges of $U_i$); and by replacing each square by a van Kampen diagram over $\langle X_{G_1}, X_{G_2} \mid \mathcal{R}_{G_1}, \mathcal{R}_{G_2}, [x,y]=1 \ \text{if $x \in X_{G_1}, y \in X_{G_2}$} \rangle$ of minimal area with the same boundary (where $G_1$ and $G_2$ are the two vertex-groups labelling the edges of the corresponding square). Our goal is to prove that the area of $\Delta^+$ is bounded above by~$n^2+ \sum\limits_{G \in \mathcal{G}} \overline{\delta_G}(n)$.
\begin{figure}
\begin{center}
\includegraphics[trim={0 19cm 29cm 0},clip,scale=0.5]{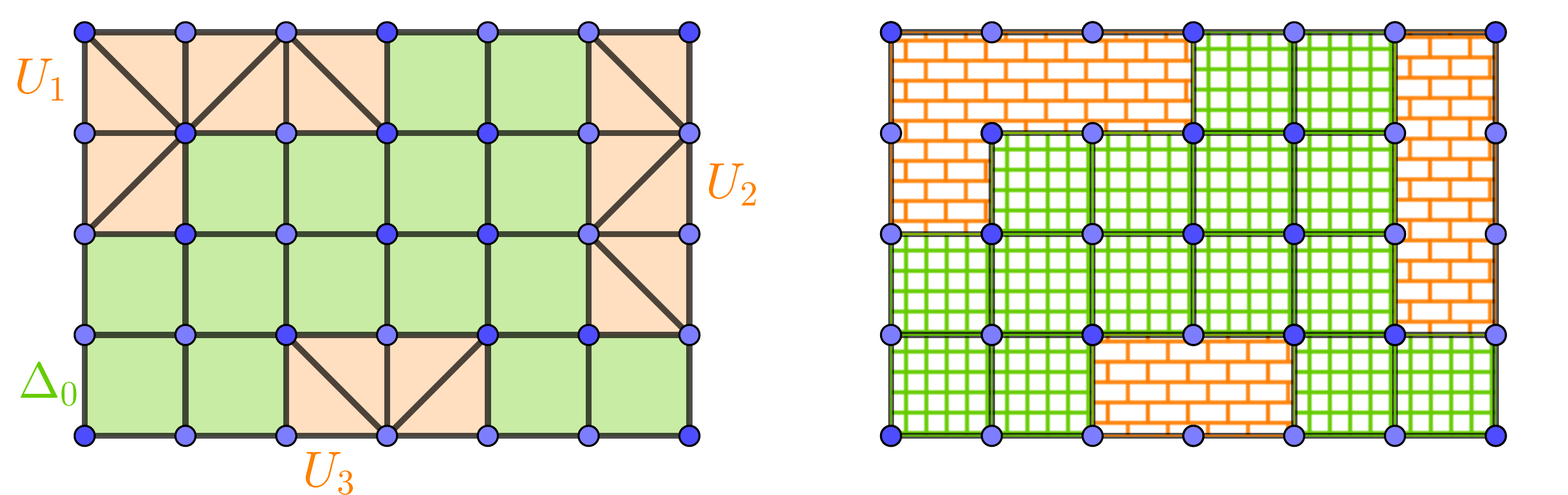}
\caption{A van Kampen diagram over $\mathcal{P}$ from a diagram over $\mathcal{Q}$ satisfying $(\ast)$.}
\label{fillin}
\end{center}
\end{figure}

\medskip \noindent
For every $u \in V(\Gamma)$, we denote by $I_u$ the set of indices $i$ such that $\gamma_i$ is labelled by the vertex-group $G_u$. We claim that $\displaystyle \sum\limits_{i \in I_u} \mathrm{Area}(U_i^+) \leq \overline{\delta_{G_u}}(n)$ where $U_i^+$ denotes the van Kampen diagram replacing $U_i$ when constructing $\Delta^+$ from $\Delta$. 

\medskip \noindent
For every $i \in I_u$, let $w_i=s_1^i \cdots s_{r(i)}^i$ denote the word labelling $\partial U_i$, and let $e^i_k$ denote the edge of $\partial U_i$ corresponding to the syllable $s_k^i$ for every $1 \leq k \leq r(i)$. Because there is an arc of $\gamma_i$ crossing both $e^i_k$ and $\partial \Delta$, it follows from Lemma \ref{lem:labelarc} that $s_k^i$ is also a syllable of $w$. Moreover, since an arc of a dual curve of $\Delta$ which intersects $\partial \Delta$ cannot cross two edges of some $U_i$ or two edges of two distinct $U_i$, then each $s_k^i$ corresponds to a distinct syllable of $w$. As a consequence, we deduce that
$$\sum\limits_{i \in I_u} \mathrm{Area}(U_i^+) \leq \sum\limits_{i \in I_u} \delta_{G_u}( \mathrm{length}(w_i)) \leq \overline{\delta_{G_u}}(n)$$
since $\displaystyle \sum\limits_{i \in I_u} \mathrm{length}(w_i) \leq \mathrm{length}(w)  \leq n$. This concludes the proof of our claim.

\medskip \noindent
Now, we want to prove that, if $\Delta_0^+$ denotes the subdiagram of $\Delta^+$ corresponding to $\Delta_0$, then $\mathrm{Area}(\Delta_0^+) \leq  n^2$. First of all, notice that:

\begin{claim}
A dual curve of $\Delta_0$ cannot be a circle, and two distinct dual curves intersect at most once.
\end{claim}

\noindent
It follows from the proof of \cite[Lemma 2.2]{BigWise} that, if $\Delta_0$ contains a dual curve which is a circle or two dual curves which intersect at least twice, then the area of $\Delta_0$ can be decreased by applying hexagonal moves and square-reductions. As a consequence, the minimality of the area of $\Delta$ implies that $\Delta_0$ cannot contain a dual curve which is a circle nor two dual curves intersecting twice.

\medskip \noindent
Let $\alpha_1, \ldots, \alpha_p$ denote the dual curves of $\Delta_0$. They are arcs of dual curves of $\Delta$, which have to intersect $\partial \Delta$. We know from Lemma \ref{lem:labelarc} that, for every $1 \leq i \leq p$, all the edges crossed by $\alpha_i$ (if we orient them in a coherent way) are labelled by the same syllable, say $q_i$. Notice that each $q_i$ has to be a syllable of $w$ since $\alpha_i$ intersects $\partial \Delta$. Hence
$$\mathrm{Area}(\Delta_0^+) \leq \sum\limits_{1 \leq i,j \leq p} \mathrm{length}(q_i) \cdot \mathrm{length}(q_j) = \left( \sum\limits_{i=1}^p \mathrm{length}(q_i) \right)^2 \leq \mathrm{length}(w)^2 \leq n^2,$$
which is the desired conclusion.

\medskip \noindent
Finally, we deduce from the previous two inequalities that
$$\mathrm{Area}(\Delta^+) = \mathrm{Area}(\Delta_0^+) + \sum\limits_{u \in V(\Gamma)} \sum\limits_{i \in I_u} \mathrm{Area}(U_i^+) \leq n^2+ \sum\limits_{G \in \mathcal{G}} \overline{\delta_G}(n),$$
which concludes the proof of our proposition.
\end{proof}

\noindent
Before turning to the proof of Theorem \ref{thm:Dehn} we need two last preliminary results, namely:

\begin{prop}\label{prop:DehnDirect}
Let $G$ and $H$ be two finitely presented groups. Then
$$\delta_{G \oplus H} \sim \left\{ \begin{array}{cl} \max(n \mapsto n^2, \delta_G, \delta_H) & \text{if $G$ and $H$ are infinite} \\ \max( \delta_G, \delta_H) & \text{otherwise}\end{array} \right..$$
\end{prop}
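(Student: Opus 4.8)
The plan is to prove Proposition \ref{prop:DehnDirect} about the Dehn function of a direct sum $G \oplus H$ by combining an upper bound coming from Proposition \ref{prop:Dehn} (applied to the very simple graph consisting of a single edge joining a vertex labelled $G$ to a vertex labelled $H$) with an easy lower bound. Note that when $\Gamma$ is the single edge $\{u,v\}$ with $G_u = G$, $G_v = H$, the graph product $\Gamma\mathcal{G}$ is exactly $G \oplus H$, both vertices are adjacent to all other vertices (there is only one other vertex), so in the notation of the section $\widetilde{\delta_G} = \delta_G$ and $\widetilde{\delta_H} = \delta_H$; thus Proposition \ref{prop:Dehn} gives $\delta_{G\oplus H}(n) \leq \overline{\delta_G}(n) + \overline{\delta_H}(n) + n^2$ for all $n$. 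The first point to establish is that $\overline{\delta_G}(n) \preceq \max(n\mapsto n^2, \delta_G)(n)$: since a Dehn function is superadditive up to the constant $1$ (or at worst, one has $\delta_G(a) + \delta_G(b) \leq \delta_G(a+b) + (a+b)$ by concatenating van Kampen diagrams along a spanning arc), splitting $n = n_1 + \cdots + n_k$ yields $\sum \delta_G(n_i) \leq \delta_G(n) + n^2$ roughly, hence $\overline{\delta_G}(n) \leq \delta_G(n) + O(n^2)$. The same holds for $H$, so the upper bound $\delta_{G\oplus H}(n) \preceq \max(n\mapsto n^2, \delta_G, \delta_H)$ follows immediately when both $G$ and $H$ are infinite. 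When one of them, say $H$, is finite, the argument must be sharpened: a graphically reduced word over $G \cup H$ has the form (a $G$-syllable)(an $H$-syllable) or similar with at most one syllable from each group, so the dual curves labelled by $H$ are very few (their number is bounded by a constant depending only on the presentation of $H$, and each such $U_i$ has bounded area), hence the $\overline{\delta_H}$ term contributes only an additive constant; similarly one checks that the square-region $\Delta_0^+$ has area $O(n)$ rather than $O(n^2)$ in this case, because the total length of the (few) $H$-dual-curves is bounded. This gives $\delta_{G\oplus H}(n) \preceq \max(\delta_G, \delta_H)(n)$ in the mixed case.

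For the lower bound, I would argue as follows. In all cases $G$ and $H$ are retracts of $G \oplus H$ (project onto each factor), and a retract of a finitely presented group is undistorted with a Dehn function bounded above by that of the ambient group (this is standard: fill a loop in the factor inside $G\oplus H$ and push the filling back by the retraction); hence $\delta_G \preceq \delta_{G\oplus H}$ and $\delta_H \preceq \delta_{G\oplus H}$, so $\max(\delta_G, \delta_H) \preceq \delta_{G\oplus H}$. It remains, in the case both $G$ and $H$ are infinite, to produce words of length $O(n)$ requiring area $\gtrsim n^2$. Pick elements $g \in G$ and $h \in H$ of infinite order (or at least such that $g^n$ has word-length $\asymp n$ and likewise $h^n$ — infinite finitely generated groups contain elements whose powers have linearly growing length, e.g. by a standard pigeonhole / growth argument, or one can just take any non-torsion-like behaviour; in the worst case pass to the fact that the ball of radius $n$ is finite so there is an element at distance $\asymp n$ and use a geodesic word). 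Then the commutator word $[g^{\pm n}, h^{\pm n}]$ (more precisely $w_n = g^n h^n g^{-n} h^{-n}$ written out over the generators) has length $O(n)$ and bounds a "grid" van Kampen diagram of area $\asymp n^2$; the standard argument that this area is necessary goes through the fact that $\langle g \rangle \times \langle h \rangle$ embeds as $\mathbb{Z}^2$ (or a suitable quotient) and the abelianised/cohomological area lower bound, or more elementarily by counting the $[x,y]$-relators needed as in the classical proof that $\delta_{\mathbb{Z}^2}(n) \asymp n^2$. Combining, $\delta_{G\oplus H}(n) \succeq n^2$ when both factors are infinite.

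The main obstacle, I expect, is the lower bound $\delta_{G\oplus H} \succeq n^2$ in the infinite-infinite case — specifically making the "$w_n$ needs quadratic area" argument clean in the generality of arbitrary infinite finitely presented $G,H$ rather than $\mathbb{Z}$. The cleanest route is probably to invoke that $\delta_{\mathbb{Z}^2}(n)\asymp n^2$ together with the fact that $\mathbb{Z}^2 \cong \langle g\rangle\times\langle h\rangle$ sits in $G\oplus H$ either undistorted (if $g,h$ have infinite order and their cyclic subgroups are undistorted) or, to avoid distortion subtleties, to run the combinatorial/dual-curve count directly on a van Kampen diagram over $\mathcal{Q}$ for $w_n$: any such diagram has $n$ dual curves labelled by the $G$-letter and $n$ labelled by the $H$-letter, pairwise transverse, forcing $\geq n^2$ square-intersections, hence $\geq n^2$ squares, and each square expands to at least one $2$-cell in the corresponding diagram over $\mathcal{P}$; this is in fact the cleanest argument and uses exactly the machinery of Sections \ref{section:def}--\ref{section:normalform}. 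The finite-factor case upper bound, while not deep, also requires care in re-examining the proof of Proposition \ref{prop:Dehn} to see that the $n^2$ and $\overline{\delta_H}$ contributions collapse to constants; I would state this as a short addendum to that proof rather than redo it from scratch. Everything else (super-additivity of Dehn functions, retracts, the equivalence relation $\sim$) is routine and can be cited or dispatched in a line.
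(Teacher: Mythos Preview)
Your approach diverges from the paper's and contains two genuine gaps.

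\medskip

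\textbf{Gap 1 (upper bound, infinite case).} You want to pass from Proposition~\ref{prop:Dehn}, which gives $\delta_{G\oplus H}(n)\leq \overline{\delta_G}(n)+\overline{\delta_H}(n)+n^2$, to $\delta_{G\oplus H}\prec \max(n\mapsto n^2,\delta_G,\delta_H)$. For this you need $\overline{\delta_G}\prec \max(n\mapsto n^2,\delta_G)$, and you justify it by a superadditivity claim ``$\delta_G(a)+\delta_G(b)\leq \delta_G(a+b)+(a+b)$ by concatenating van Kampen diagrams along a spanning arc''. This argument does not work: cutting a minimal diagram for $w_1w_2$ along an arc joining the two junction vertices does \emph{not} produce diagrams for $w_1$ and $w_2$, because the arc is labelled by some nontrivial word. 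In fact, whether every Dehn function is equivalent to a subnegative function is an open problem, as the paper itself notes in the remark closing Section~\ref{section:Dehn}. So the passage from $\overline{\delta_G}$ back to $\delta_G$ cannot be taken for granted.

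\medskip

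\textbf{Gap 2 (finite case).} You argue that when $H$ is finite the $H$-dual curves are ``very few'' because a graphically reduced word has at most one $H$-syllable. But the boundary word $w$ in the proof of Proposition~\ref{prop:Dehn} is an arbitrary word of length $\leq n$ over the finite generating set; it need not be graphically reduced and can have $\Theta(n)$ $H$-syllables, hence $\Theta(n)$ $H$-dual curves. The contribution of $\Delta_0^+$ is then genuinely quadratic, and your sharpening does not go through.

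\medskip

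\textbf{What the paper does instead.} The paper's proof is much shorter and sidesteps both issues. If one factor, say $H$, is finite, then $G$ has finite index in $G\oplus H$, whence $\delta_{G\oplus H}\sim\delta_G\sim\max(\delta_G,\delta_H)$ immediately. When both are infinite, the upper bound $\delta_{G\oplus H}\prec\max(n\mapsto n^2,\delta_G,\delta_H)$ and the lower bound $\max(\delta_G,\delta_H)\prec\delta_{G\oplus H}$ are cited from Brick \cite{BrickDehn}. For the remaining inequality $(n\mapsto n^2)\prec\delta_{G\oplus H}$, the paper observes that any infinite finitely generated group admits a quasi-isometric embedding of $[0,+\infty)$ (a geodesic ray), so $[0,+\infty)^2$ quasi-isometrically embeds in $G\oplus H$, and then invokes $\delta_{[0,+\infty)^2}\sim n^2$ together with monotonicity of Dehn functions under quasi-isometric embeddings \cite{AlonsoDehn}. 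Your dual-curve count for $[g^n,h^n]$ is in the right spirit for this last step, but note that it bounds the number of squares in a $\mathcal{Q}$-diagram, and you would still need to argue that this forces a comparable lower bound on the area of any $\mathcal{P}$-diagram; the ray-embedding argument avoids that translation.
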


\begin{proof}
If $G$ (resp. $H$) is finite then $H$ (resp. $G$) is a finite-index subgroup of $G \oplus H$, hence $\delta_{G \oplus H} \sim \delta_H \sim \max(\delta_G, \delta_H)$ (resp. $\delta_{G \oplus H} \sim \delta_G \sim \max(\delta_G, \delta_H)$). From now on, we suppose that $G$ and $H$ are both infinite. The inequality $\delta_{G \oplus H} \prec  \max(n \mapsto n^2, \delta_G, \delta_H)$ is proved in \cite[Proposition 2.1]{BrickDehn} and the inequality $\max ( \delta_G, \delta_H) \prec \delta_{G \oplus H}$ is proved in \cite[Corollary 2.3]{BrickDehn}. It remains to show that $(n \mapsto n^2) \prec \delta_{G \oplus H}$. 

\medskip \noindent
Because $G$ and $H$ are both infinite, there exist quasi-isometric embeddings $[0,+ \infty) \hookrightarrow G$ and $[0,+ \infty) \hookrightarrow H$, from which we deduce that there exists a quasi-isometric embedding $[0,+ \infty)^2 \hookrightarrow G \oplus H$. Therefore,
$$(n \mapsto n^2) \sim \delta_{[0,+ \infty)^2} \prec \delta_{G \oplus H}$$
as desired. (Here $[0,+ \infty)^2$ is identified with the square complex obtained by tiling the plane with squares. We refer to \cite{AlonsoDehn} for the definition of Dehn functions of cellular complexes.) 
\end{proof}

\begin{prop}\label{prop:DehnFree}
Let $G$ and $H$ be two non-trivial finitely presented groups. Then
$$\delta_{G \ast H} \sim \max(\overline{\delta_G}, \overline{\delta_H}).$$
\end{prop}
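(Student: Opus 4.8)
The plan is to prove the two inequalities $\delta_{G \ast H} \prec \max(\overline{\delta_G}, \overline{\delta_H})$ and $\max(\overline{\delta_G}, \overline{\delta_H}) \prec \delta_{G \ast H}$ separately, using van Kampen diagrams over the graph product presentation $\mathcal{Q}$ associated to the graph $\Gamma$ consisting of two non-adjacent vertices. Since $G \ast H = \Gamma \mathcal{G}$ with $E(\Gamma) = \emptyset$, the presentation $\mathcal{Q}$ has no commutation relations and no squares appear in diagrams; every $2$-cell is a triangle, so every dual curve is regular and consists entirely of triangles labelled by a single vertex-group ($G$ or $H$). The result is of course a classical fact, but the point is to give the van Kampen-diagram proof consistent with the rest of the paper, so that it also serves as the degenerate case $n^2$-term vanishing in Proposition \ref{prop:Dehn}.

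For the upper bound, I would apply Proposition \ref{prop:Dehn} directly: since $\Gamma$ has no edges, the $n^2$ term in its proof comes only from squares, of which there are none, so that argument in fact yields $\delta_{\Gamma\mathcal{G}}(n) \le \overline{\delta_G}(n) + \overline{\delta_H}(n) \le 2\max(\overline{\delta_G},\overline{\delta_H})(n)$, giving $\delta_{G\ast H} \prec \max(\overline{\delta_G},\overline{\delta_H})$. Alternatively, to keep the proof self-contained: take a van Kampen diagram $\Delta$ over $\mathcal{Q}$ for a word $w$ of length $\le n$, made to satisfy Condition $(\ast)$ as in Proposition \ref{prop:Dehn}; then $\Delta$ is a union of triangle-blocks $U_1, \ldots, U_r$, each homeomorphic to a disc, each meeting $\partial\Delta$, and the boundary words $w_i = \partial U_i$ have syllable-lengths summing to at most $\mathrm{length}(w) \le n$ (by Lemma \ref{lem:labelarc}, each boundary syllable of a block is a distinct syllable of $w$). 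Replacing each $U_i$ by a minimal van Kampen diagram over $\mathcal{P}_G$ or $\mathcal{P}_H$ gives total area $\le \sum_{U_i \text{ over } G} \delta_G(\mathrm{length}(w_i)) + \sum_{U_i \text{ over } H} \delta_H(\mathrm{length}(w_i)) \le \overline{\delta_G}(n) + \overline{\delta_H}(n)$.

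For the lower bound, I would separately establish $\overline{\delta_G} \prec \delta_{G \ast H}$ (and symmetrically for $H$). First, $\delta_G \prec \delta_{G\ast H}$ because $G$ is a retract of $G \ast H$ (or directly: $G = \langle\Lambda\rangle$ for $\Lambda$ the single $G$-vertex, which is quasi-isometrically embedded by Corollary \ref{cor:parabolic}, and retracts preserve Dehn functions up to $\prec$). To upgrade this to $\overline{\delta_G} \prec \delta_{G\ast H}$, fix words $u_1, \ldots, u_k$ over $X_G$ with $u_i = 1$ in $G$, $\mathrm{Area}_G(u_i) = \delta_G(n_i)$, and $\sum n_i = n$, and let $t$ be a generator of the non-trivial group $H$; consider the word $w = u_1 t u_2 t \cdots u_k t^{-(k-1)} \cdots$ — more precisely a word of the form $u_1 (t u_1' t^{-1})(t^2 u_1'' t^{-2})\cdots$ that conjugates the $u_i$'s apart so that no cancellation between them is possible — of length $O(n + k^2) = O(n^2)$ that equals $1$ in $G \ast H$. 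Any van Kampen diagram over $\mathcal{P}$ for $w$ must, after analysing the dual curves of the $t$-letters which separate the diagram into $k$ sub-diagrams (one for each $u_i$), have area $\ge \sum_i \mathrm{Area}_G(u_i) = \sum_i \delta_G(n_i)$; taking the sup over all such decompositions gives $\overline{\delta_G}(n) \le \delta_{G\ast H}(O(n^2))$, hence $\overline{\delta_G} \prec \delta_{G\ast H}$.

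The main obstacle I anticipate is the lower bound, specifically making rigorous the claim that a van Kampen diagram for the conjugated-apart word $w$ decomposes into essentially independent pieces whose areas add. The clean way to do this within the paper's framework is via dual curves: the letters $t^{\pm 1}$ give rise to dual curves labelled by the $H$-vertex which, since $H$-syllables and $G$-syllables never commute, must be trees that cut $\Delta$ into regions carrying the individual $u_i$'s; bounding the area of the $H$-part of $\Delta$ from below (or just discarding it) and bounding each $G$-region below by $\delta_G(n_i)$ requires knowing those regions have the right boundary words, which follows from the normal form (Corollary \ref{cor:graphicallyreduced}) applied to the boundary of each region. One must be slightly careful that the chosen $w$ is genuinely graphically cyclically reduced off the $u_i$-parts so that no collapsing occurs; choosing the powers of $t$ to grow (e.g. $w = \prod_{i} t^{i} u_i t^{-i}$ suitably parenthesised, adjusted to be a null-homotopic word) handles this. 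I would present the lower bound at the level of detail above, citing \cite{BrickDehn} if a fully spelled-out separation lemma is needed, since the analogous statement for free products is standard.
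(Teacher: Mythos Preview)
The paper's own proof of this proposition is a pair of citations: the upper bound $\delta_{G\ast H}\prec\max(\overline{\delta_G},\overline{\delta_H})$ to \cite[Proposition~3.1]{BrickDehn}, and the lower bound $\max(\overline{\delta_G},\overline{\delta_H})\prec\delta_{G\ast H}$ to \cite{DehnFreeProd}. So you are attempting considerably more than the paper does.

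Your upper bound is fine, and is indeed exactly the argument of Proposition~\ref{prop:Dehn} specialised to the edgeless graph: with no commutation relations there are no squares, so $\Delta_0$ is empty and the $n^2$ term drops out. (One terminological slip: you write ``every dual curve is regular and consists entirely of triangles'', but in the paper's language a \emph{regular} dual curve is one with \emph{no} singularity, i.e.\ crossing no triangle; you mean the opposite.)

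Your lower bound, however, has a real gap. The word $w=\prod_i t^i u_i t^{-i}$ you build has length $\sum_i n_i + O(k^2)$, and since $k$ can be as large as $n$ this is $O(n^2)$, not $O(n)$. The conclusion you draw, $\overline{\delta_G}(n)\le \delta_{G\ast H}(O(n^2))$, does \emph{not} yield $\overline{\delta_G}\prec\delta_{G\ast H}$ in the sense of Definition~\ref{def:equivalentfunctions}; it only gives the much weaker $\overline{\delta_G}(n)\prec\delta_{G\ast H}(n^2)$. To get a linear-length test word one must separate the $u_i$'s by bounded-length $H$-syllables (e.g.\ a single non-trivial $h$), and then one must argue that \emph{over the finite presentation $\mathcal{P}$}, not over $\mathcal{Q}$, these separators force the $G$-regions of any filling diagram to be genuinely independent. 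That separation/corridor argument is precisely the non-trivial content of \cite{DehnFreeProd} (the subnegativity of Dehn functions of free products); it is not in \cite{BrickDehn}, which only gives the retract inequality $\delta_G\prec\delta_{G\ast H}$ and the upper bound. So either cite \cite{DehnFreeProd} for this direction, as the paper does, or supply the full corridor argument --- your sketch as written does not close the gap.
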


\begin{proof}
The inequality $\delta_{G \ast H} \prec \max(\overline{\delta_G}, \overline{\delta_H})$ is proved in \cite[Proposition 3.1]{BrickDehn}. The inequality $\max(\overline{\delta_G}, \overline{\delta_H}) \prec \delta_{G \ast H}$ is proved in~\cite{DehnFreeProd}. 
\end{proof}

\noindent
We are finally ready to prove the main result of this section, namely Theorem \ref{thm:Dehn}.

\begin{proof}[Proof of Theorem \ref{thm:Dehn}.]
If $\Gamma$ is a clique, then $\Gamma \mathcal{G}$ coincides with the direct sum $\bigoplus\limits_{G \in \mathcal{G}} G$, so the equivalence $\delta_{\Gamma \mathcal{G}} \sim \max (\delta_G, G \in \mathcal{G})$ follows from Proposition \ref{prop:DehnDirect}. From now on, we suppose that $\Gamma$ is not a clique.

\medskip \noindent
Suppose first that $(\Gamma, \mathcal{G})$ does not satisfy Meier's condition. Decompose $\Gamma$ as a join $\Lambda \ast C$ where $C$ is a clique and where each vertex of $\Lambda$ is not adjacent to some other vertex of $\Lambda$. Set $\mathcal{H}= \{G_u \mid u \in V(\Lambda)\}$. Since $\Gamma \mathcal{G}$ coincides with $\bigoplus\limits_{u\in V(C)} G_u \oplus \Lambda \mathcal{H}$, it follows from Proposition \ref{prop:DehnDirect} that $\delta_{\Gamma \mathcal{G}} \sim \max ( \eta, \delta_{\Lambda \mathcal{H}}, \delta_{G_u}, u \in V(C))$ where $\eta$ is a function which is either bounded or quadratic. Next, we know from Proposition \ref{prop:Dehn} that $\delta_{\Lambda \mathcal{H}} \prec \max(n \mapsto n^2, \overline{\delta_{G_u}}, u \in V(\Lambda))$. Therefore,
$$\delta_{\Gamma \mathcal{G}} \prec \max \left( n \mapsto n^2, \overline{\delta_{G_u}}, u \in V(\Lambda), \delta_{G_u},u \in V(C) \right) = \max\left( n \mapsto n^2, \widetilde{\delta_{G}}, G \in \mathcal{G} \right).$$
Conversely, if $G$ is a vertex-group, then the inequality $\delta_G \prec \delta_{\Gamma \mathcal{G}}$ holds since vertex-groups are quasi-isometrically embedded according to Corollary \ref{cor:parabolic}. Moreover, if $H$ is a vertex-group which is not adjacent to $G$, then it follows from Corollary \ref{cor:parabolic} that the subgroup $\langle G,H \rangle$ is quasi-isometrically embedded and isomorphic to the free product $G \ast H$. We deduce from Proposition \ref{prop:DehnFree} that $\overline{\delta_G} \prec \delta_{G \ast H} \prec \delta_{\Gamma \mathcal{G}}$. Thus, we have proved that 
$$\max \left( \widetilde{\delta_G}, G \in \mathcal{G} \right) \prec \delta_{\Gamma \mathcal{G}}.$$ 
We emphasize that this inequality holds in full generality, ie., it does not depend on Meier's condition. However, this condition will imply that $(n \mapsto n^2) \prec \delta_{\Gamma \mathcal{G}}$. Indeed, 
\begin{itemize}
	\item if there exist two adjacent vertex-groups $G$ and $H$, then we deduce that $$(n \mapsto n^2) \prec \delta_{G \oplus H} \prec \delta_{\Gamma \mathcal{G}}$$ from Corollary \ref{cor:parabolic} and Proposition \ref{prop:DehnDirect};
	\item if $\Gamma$ contains a square $\Lambda$ whose vertices are labelled by the groups $A,B,C,D$, then $\langle \Lambda \rangle \simeq (A \ast C) \oplus (B \ast D)$ so that we deduce that $$(n \mapsto n^2) \prec \delta_{\langle \Lambda \rangle} \prec \delta_{\Gamma \mathcal{G}}$$ from Corollary \ref{cor:parabolic} and Proposition \ref{prop:DehnDirect};
	\item and if there exists a vertex $u \in V(\Gamma)$ labelled by an infinite group which is adjacent to two non-adjacent vertices $v,w \in V(\Gamma)$, then $\langle G_u,G_v,G_w \rangle \simeq G_u \oplus (G_v \ast G_w)$ so that we deduce that $$(n \mapsto n^2) \prec \delta_{\langle G_u,G_v,G_w \rangle} \prec \delta_{\Gamma \mathcal{G}}$$ from Corollary \ref{cor:parabolic} and Proposition \ref{prop:DehnDirect}.
\end{itemize}
Now, suppose that $(\Gamma, \mathcal{G})$ satisfies Meier's condition. Because $\Gamma$ is not a clique, it has to contain two non-adjacent vertices, say $u$ and $v$. Then it follows from Corollary \ref{cor:parabolic} that $(n \mapsto n) \prec \delta_{\langle G_u,G_v \rangle} \prec \delta_{\Gamma \mathcal{G}}$ since $\langle G_u, G_v \rangle \simeq G_u \ast G_v$ is infinite. Moreover, we saw above that $\max \left( \widetilde{\delta_G}, G \in \mathcal{G} \right) \prec \delta_{\Gamma \mathcal{G}}$. Therefore, we know that
$$\max \left( n \mapsto n, \widetilde{\delta_G}, G \in \mathcal{G} \right) \prec \delta_{\Gamma \mathcal{G}}.$$
Notice that, if $G$ is a vertex-group which is adjacent to all the other vertex-groups, then Meier's condition and the fact that $\Gamma$ is not a clique implies that $G$ must be finite. Otherwise, saying $G$ is a finite direct factor of $\Gamma \mathcal{G}$. This observation implies that
$$\max \left(n \mapsto n , \widetilde{\delta_G} , G \in \mathcal{G} \right) = \max \left( \overline{\delta_G}, G \in \mathcal{G} \right),$$
and also that we may suppose without loss of generality that $\Gamma$ does not contain any vertex which is adjacent to all the other vertices. Our goal now is to prove the inequality $\delta_{\Gamma \mathcal{G}} \prec \max \left( \overline{\delta_G}, G \in \mathcal{G} \right)$. 

\medskip \noindent
Let $u \in V(\Gamma)$ be a vertex labelled by an infinite vertex-group. We deduce from Meier's condition that $\mathrm{link}(u)$ is a complete graph whose vertices are labelled by finite vertex-groups. As a consequence, $\Gamma \mathcal{G}$ decomposes an amalgamated sum over a finite subgroup, namely $\langle \Gamma \backslash \{u \} \rangle \underset{\langle \mathrm{link}(u) \rangle}{\ast} \langle \mathrm{star}(u) \rangle$. It follows from \cite[Proposition 3.2]{BrickDehn} that 
$$\delta_{\Gamma \mathcal{G}} \prec \max \left( \overline{\delta_{\langle \Gamma \backslash \{u \} \rangle}}, \overline{\delta_{\langle \mathrm{star}(u) \rangle}} \right) \sim \max \left( \overline{\delta_{\langle \Gamma \backslash \{u \} \rangle}}, \overline{\delta_{G_u}} \right)$$
where we used the equivalence $\delta_{\langle \mathrm{star}(u) \rangle} \sim \delta_{G_u}$ which implied by the fact that $G_u$ has finite index in $\langle \mathrm{star}(u) \rangle$. By iterating the argument to all the infinite vertex-groups, it follows that
$$\delta_{\Gamma \mathcal{G}} \prec \max \left( \overline{\delta_{\langle \Lambda \rangle}}, \overline{\delta_{G_u}}, u \in I \right) = \max \left( \overline{\delta_{\langle \Lambda \rangle}} , \overline{\delta_G}, G \in \mathcal{G} \right)$$
where $\Lambda$ denotes the subgraph of $\Gamma$ induced by the vertices labelled by finite vertex-groups and where $I \subset V(\Gamma)$ denotes the vertices labelled by infinite vertex-groups. As a consequence of Corollary \ref{cor:parabolic}, the subgroup $\langle \Lambda \rangle$ is isomorphic to the graph product $\Lambda \mathcal{H}$ where $\mathcal{H}= \{ G_u \mid u \in V(\Lambda) \}$. Notice that $(\Lambda, \mathcal{H})$ satisfies Meier's condition, so that we know from \cite{MeierGP} (see also \cite[Section 8.3]{Qm}) that $\Lambda \mathcal{H}$ is a hyperbolic group, so that $\overline{\delta_{\langle \Lambda \rangle}}$ must be linear. Thus, we have proved
$$\delta_{\Gamma \mathcal{G}} \prec \max \left( \overline{\delta_G}, G \in \mathcal{G} \right)$$
concluding the proof of our theorem.
\end{proof}

\begin{remark}
Whether there exists a finitely presented group whose Dehn function is not equivalent to a subnegative function is still an open question. In \cite{DehnFreeProd}, it is proved that the Dehn function of a free product is always subnegative. As a consequence of Theorem \ref{thm:Dehn}, we find more generally that the Dehn function of a graph product which is not a direct sum is always subnegative.
\end{remark}

\section{Conjugacy problem}\label{section:conj}

\noindent
In this section, we are interested in the conjugacy problem in graph products of groups. More precisely, we would like to be able to determine whether or not two given elements of a graph product are conjugate. We begin by introducing \emph{graphically reduced words}, which is a generalisation of cyclically reduced words in free groups and more generally in free products. 

\begin{definition}
Let $\Gamma$ be a simplicial graph and $\mathcal{G}$ a collection of groups indexed by $V(\Gamma)$. A word $s_1 \cdots s_n$ written over $\bigcup\limits_{u \in G_u} G_u$ is \emph{graphically cyclically reduced} if it is graphically reduced and if there does not exist $1 \leq i< j \leq n$ such that the vertex-group containing $s_i$ (resp. $s_j$) is adjacent to the vertex-group containing $s_k$ for every $1 \leq k < i$ (resp. for every $j<k \leq n$).
\end{definition}

\noindent
It is worth noticing that a power of graphically cyclically reduced word may not be graphically reduced. This phenomenon is due to the possible existence of \emph{floating syllables}.

\begin{definition}
Let $\Gamma$ be a simplicial graph, $\mathcal{G}$ a collection of groups indexed by $V(\Gamma)$, and $s_1 \cdots s_n$ a word written over $\bigcup\limits_{u \in G_u} G_u$. For every $1 \leq i \leq n$, the syllable $s_i$ is \emph{floating} if the vertex-group which contains $s_i$ is adjacent to the vertex-group containing $s_k$ for every $1 \leq k \leq n$ distinct from $i$. 
\end{definition}

\noindent
Our first step is to show that the conjugacy problem may be reduced to graphically cyclically reduced elements:

\begin{lemma}\label{lem:graphicallycyclicallyreduced}
Let $\Gamma$ be a simplicial graph and $\mathcal{G}$ a collection of groups indexed by $V(\Gamma)$. For every $a \in \Gamma \mathcal{G}$, there exist $b,g \in \Gamma \mathcal{G}$ such that $a=gbg^{-1}$ and such that $b$ is graphically cyclically reduced.
\end{lemma}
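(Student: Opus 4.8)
The plan is to start with an arbitrary element $a \in \Gamma\mathcal{G}$ and repeatedly ``fold'' syllables that prevent graphical cyclic reducedness, each step decreasing a suitable complexity measure so that the process terminates. Concretely, I would fix a graphically reduced word $w = s_1 \cdots s_n$ representing $a$ (which exists by Theorem~\ref{thm:minlength} and the remark that every element has such a representative). If $w$ is already graphically cyclically reduced, take $b = a$ and $g = 1$. Otherwise, by definition there exist $1 \leq i < j \leq n$ such that, say, the vertex-group $G_u$ containing $s_i$ is adjacent to every vertex-group containing $s_k$ for $1 \leq k < i$. Using Corollary~\ref{cor:graphicallyreduced} (permuting successive syllables in adjacent vertex-groups), one may shuffle $s_i$ to the front of the word, obtaining a graphically reduced word of the form $s_i \cdot s_1 \cdots \widehat{s_i} \cdots s_n$ representing $a$. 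Then $s_i^{-1} a s_i$ is represented by $s_1 \cdots \widehat{s_i} \cdots s_n \cdot s_i$, i.e.\ we have conjugated $a$ so that the offending syllable $s_i$ is moved to the other end; after one more application of Corollary~\ref{cor:graphicallyreduced} it may merge with a syllable already at that end (if they lie in a common vertex-group), reducing the syllable length, or at worst it stays and we have made progress in a positional sense.

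\textbf{Complexity measure and termination.} The main subtlety is to package this so that it genuinely terminates. I would use the pair $(n, m)$ ordered lexicographically, where $n$ is the syllable length of a minimal-length (hence graphically reduced) word for the current conjugate and $m$ counts the number of ``bad'' pairs $(i,j)$ witnessing failure of graphical cyclic reducedness (or, alternatively, a sum over the offending initial/terminal syllables of their distance to the ``wrong'' end). The key observation is that when $s_i$ at the beginning is adjacent to everything before it and we conjugate it to the end, one of two things happens: either $s_i$ now lies next to a syllable of the same vertex-group and they combine, strictly decreasing $n$; or $s_i$ arrives at the end without combining, in which case I claim the new word has strictly fewer bad pairs — essentially because $s_i$ was adjacent to all the syllables it passed over, so conjugating it across cannot create new obstructions, and it resolves at least its own. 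This claim, which requires a careful bookkeeping argument using Lemma~\ref{lem:labeltransverse}-style adjacency reasoning, is the step I expect to be the main obstacle; one has to rule out that shuffling $s_i$ to the other end introduces a fresh pair $(i',j')$ violating the condition. Once this monotonicity is established, the process stops after finitely many steps, and the final word is graphically cyclically reduced.

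\textbf{Assembling the conjugator.} Finally, I would collect the conjugating elements. Each step replaces the current conjugate $a_t$ by $g_t^{-1} a_t g_t$ where $g_t$ is a single syllable (or a short product of syllables), so after $N$ steps we have $b = a_N = g^{-1} a g$ with $g = g_1 g_2 \cdots g_N \in \Gamma\mathcal{G}$, and $b$ graphically cyclically reduced. Setting $a = g b g^{-1}$ gives exactly the statement. I would remark that floating syllables are harmless here: the definition of graphically cyclically reduced does not forbid them, and in fact a word consisting only of floating syllables (or even a single syllable) is automatically graphically cyclically reduced, which is why the base cases of the recursion are clean. The only genuinely nontrivial ingredient beyond Theorem~\ref{thm:minlength} and Corollary~\ref{cor:graphicallyreduced} is the termination argument; everything else is manipulation of graphically reduced words.
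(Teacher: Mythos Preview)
Your overall plan—start from a graphically reduced representative, and repeatedly conjugate by a single syllable to push an offending letter to the other end—is the right idea and is exactly what the paper does. But you are making the termination argument much harder than it needs to be, because you are only using half of the definition.

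When the word $s_1\cdots s_n$ fails to be graphically cyclically reduced, the witnessing pair $(i,j)$ gives you \emph{both} a syllable $s_i$ that can be shuffled to the front \emph{and} a syllable $s_j$, in the \emph{same} vertex-group as $s_i$, that can be shuffled to the end (this is how the paper reads the definition; see the first sentence of its proof). Once you use both, the argument becomes a one-liner: shuffle $s_i$ to position $1$ and $s_j$ to position $n$, then conjugate by $s_i$. The resulting word ends in $\cdots s_j\, s_i$, and since $s_i,s_j$ lie in a common vertex-group they merge into the single syllable $s_js_i$. Hence the syllable length \emph{always} drops by one, and induction on $n$ finishes immediately. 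No secondary complexity measure, no bookkeeping of ``bad pairs'', is needed.

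By contrast, your proposed fallback—when $s_i$ lands at the far end without merging, argue that the number of bad pairs strictly decreases—is genuinely delicate and, as stated, not clearly correct. Removing $s_i$ from position $i$ can unblock some $s_k$ with $k>i$ that previously could not shuffle to the front (because $s_i$ was the obstruction), so new bad pairs can appear; your sketch does not rule this out. You flag this as ``the main obstacle'', and it is one you simply don't have to face: exploit $s_j$ as well, guarantee the merge, and the lexicographic measure collapses to just the syllable length.
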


\begin{proof}
Write $a$ as a graphically reduced word $a_1 \cdots a_n$. If this word is not graphically cyclically reduced, then there exist $1 \leq i <j \leq n$ such that $a_i$ and $a_j$ belong to a common vertex-group and such that $a_i$ shuffles to the beginning in $a_1 \cdots a_i$ and $a_j$ shuffles to the end in $a_j \cdots a_n$. Therefore, 
$$\begin{array}{lcl} a & = & a_i a_1 \cdots a_{i-1}a_{i+1} \cdots a_{j-1}a_{j+1} \cdots a_n a_j \\ \\ & = & a_i \cdot \underset{=:a'}{\underbrace{\left( a_1 \cdots a_{i-1}a_{i+1} \cdots a_{j-1}a_{j+1} \cdots a_n (a_ja_i) \right)}} \cdot a_i^{-1} \end{array}$$
in $\Gamma \mathcal{G}$. Notice that the word $a'$ is shorter than the word $a$. As a consequence, by iterating the argument, we eventually find a graphically cyclically reduced element which is conjugate to $a$.
\end{proof}

\begin{remark}\label{remark}
It is worth noticing that the proof is constructive, so that there exists an algorithm to find $b$ and $g$ from $a$ if the word problem is solvable in all the vertex-groups (which is necessary in order to determine if a word is graphically reduced).
\end{remark}

\noindent
We are now ready to state and prove the main result of this section:


\begin{thm}\label{thm:conjugacy}
Let $a,b \in \Gamma \mathcal{G}$ be two graphically cyclically reduced elements. Then $a$ and $b$ are conjugate if and only if there exist graphically reduced words $x_1 \cdots x_r p_1 \cdots p_n$ and $y_1 \cdots y_r q_1 \cdots q_n$ such that
\begin{itemize}
	\item $a=x_1 \cdots x_r p_1 \cdots p_n$ and $b=y_1 \cdots y_r q_1 \cdots q_n$;
	\item the $p_i$'s and the $q_i$'s are floating syllables of $a$ and $b$ respectively, and there exists a permutation $\sigma \in S_n$ such that $p_i$ and $q_{\sigma(i)}$ are conjugate in a vertex-group;
	\item $y_1 \cdots y_s$ can be obtained from $x_1 \cdots x_r$ by permuting two consecutive syllables which belong to adjacent vertex-groups and by performing cyclic permutations. 
\end{itemize} 
\end{thm}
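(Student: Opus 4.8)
The plan is to use annular diagrams over $\mathcal{Q}$ in the same spirit that van Kampen diagrams were used to prove the normal form (Theorem~\ref{thm:minlength}) and Corollary~\ref{cor:graphicallyreduced}. One direction is easy: if $a$ and $b$ admit decompositions as described, then the floating syllables $p_i$ and $q_{\sigma(i)}$ are conjugate inside their common vertex-group, hence $p_1 \cdots p_n$ and $q_1 \cdots q_n$ are conjugate (a product of commuting pairwise-conjugate elements), while $x_1 \cdots x_r$ and $y_1 \cdots y_r$ are conjugate since they differ by shuffles and cyclic permutations, which are realised by conjugations in $\Gamma \mathcal{G}$; combining these (and observing that the floating part commutes with everything relevant) gives that $a$ and $b$ are conjugate. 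So the content is the forward direction.

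For the forward direction, suppose $a$ and $b$ are conjugate, both written as graphically cyclically reduced words $w_a$ and $w_b$. Then there is an annular diagram $D$ over $\mathcal{Q}$ whose inner and outer boundary cycles are labelled by $w_a$ and $w_b^{-1}$ (up to orientation). First I would put $D$ into a good form using elementary moves: apply pentagonal moves, flips and square-reductions so that each dual curve which is not a circle is a tree whose triangles form a disc meeting the boundary, exactly as in the proof of Proposition~\ref{prop:Dehn}, and so that no two dual curves cross twice and no dual curve is a contractible circle (the latter is where Proposition~\ref{prop:circleindiag} enters — in an annular diagram an embedded circle in a dual curve is either essential, i.e. separates the two boundary components, or contractible and hence forbidden). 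The key dichotomy for a dual curve $\gamma$ of $D$ is then: either $\gamma$ is an essential circle, or $\gamma$ is a tree meeting the boundary. Using Claim~\ref{claim:graphicallyreduceddiagram}-type reasoning (a dual curve meeting a boundary subsegment labelled by a graphically reduced word meets it at most once) together with the fact that $w_a, w_b$ are graphically cyclically \emph{reduced}, I would argue that a dual-curve tree cannot have both of its boundary endpoints on the inner boundary (nor both on the outer): if it did, minimality of the subsegment between the two crossings plus Lemma~\ref{lem:labeltransverse} would contradict graphical cyclic reducedness on that boundary word. Hence every tree-type dual curve crosses the inner boundary exactly once and the outer boundary exactly once. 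This sets up a bijection between the non-floating syllables of $w_a$ and those of $w_b$, labelled by the same vertices, and the combinatorics of how these dual curves and the essential circles are arranged in the annulus encodes precisely a sequence of transpositions of adjacent syllables together with cyclic permutations carrying $x_1 \cdots x_r$ to $y_1 \cdots y_r$ — this is the annular analogue of the argument for Corollary~\ref{cor:graphicallyreduced}, reading off the $m_i$'s along radial arcs and around the essential circles.

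It remains to deal with the floating syllables. A syllable of $w_a$ is floating exactly when the corresponding dual curve, as one tries to transport it around, is transverse to \emph{all} the other dual curves it meets — equivalently, its label-vertex is adjacent to every other label-vertex appearing. I would show that after the normalisation above one can split off, as an innermost (or outermost) annular subdiagram, the part of $D$ carrying the floating dual curves; on this part the relations used are only commutation relations and triangle relations inside a single vertex-group, so the subdiagram restricted to a fixed vertex-group $G_u$ is an annular diagram over the presentation of $G_u$, witnessing that the corresponding syllables of $a$ and of $b$ are conjugate in $G_u$. Matching them up gives the permutation $\sigma$. The main obstacle I anticipate is precisely the bookkeeping at the interface between the "floating'' part and the "essential'' part of the diagram: one must check that floating syllables can genuinely be slid to the end of the word without disturbing the $x_i$-block, i.e. that the floating dual curves can be isotoped past everything, and that conversely the $x_i$-block conjugation does not get entangled with the vertex-group conjugations of the $p_i$'s. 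This should follow because a floating label-vertex is adjacent to all others, so the corresponding dual curves are transverse to all others and can be pushed off; but making this rigorous in the annular setting, keeping track of basepoints and the two boundary cycles simultaneously, is the delicate step.
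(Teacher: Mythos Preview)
Your overall strategy---use an annular diagram over $\mathcal{Q}$, classify its dual curves, and read off the combinatorics---is exactly the paper's, and your treatment of the easy direction and of the ``regular'' (arc-type) dual curves is essentially correct. The substantive gap is in your structure theorem for dual curves in the annulus and, as a consequence, in your handling of the floating syllables.

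You assert the dichotomy ``either $\gamma$ is an essential circle, or $\gamma$ is a tree meeting the boundary''. This is false, and it is precisely where the annular case diverges from the disc case. Proposition~\ref{prop:circleindiag} only forbids \emph{homotopically trivial} circles; it allows essential circles, and nothing prevents an essential circle inside a dual curve from carrying singularities. In fact the paper shows that a dual curve meeting $\partial D$ is either a regular arc from $\partial_{\mathrm{inn}}D$ to $\partial_{\mathrm{out}}D$, or what it calls \emph{circular}: a chain of essential circles, each with exactly two singularities, linked by arcs to one another and to both boundary components. These circular dual curves, not tree-type curves, are what correspond to floating syllables; they are transverse to every other dual curve meeting $\partial D$, which is why the associated syllables are floating. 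Your proposed normalisation ``as in the proof of Proposition~\ref{prop:Dehn}'' cannot eliminate them: the essential circles are topologically forced, and Claim~\ref{claim:singlesing} forces at least two triangles on each. Equally, a pure essential circle (no singularities, no contact with $\partial D$) does not correspond to any syllable of $a$ or $b$, so it cannot play the role you assign to it.

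This matters for the conjugacy of $p_i$ and $q_{\sigma(i)}$. If the floating dual curves were regular arcs, Lemma~\ref{lem:labelarc} would give $p_i=q_{\sigma(i)}$ outright, which is too strong. The conjugacy-but-not-equality is encoded in the triangles sitting on the essential circles of a circular dual curve: reading the labels around one such circle gives elements $c=ba^{-1}$ and $d=a^{-1}b$ with $b^{-1}cb=d$, and inducting along the chain of circles yields the vertex-group conjugacy. Your plan to ``split off an innermost annular subdiagram carrying the floating dual curves'' does not match the actual geometry, since these dual curves run radially across the whole annulus rather than being confined to a collar. Once you replace your dichotomy by the regular/circular classification and extract the vertex-group conjugacy from the circle-triangle structure, the remainder of your argument (the bijection of non-floating syllables, and the annular analogue of Corollary~\ref{cor:graphicallyreduced} via concentric circles) goes through as you describe.
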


\begin{proof}
First of all, suppose that the three conditions hold. So, for every $1 \leq i \leq n$, there exists an element $c_i$ which belongs to the same vertex-group as the one containing $p_i$ and $q_{i}$ and which conjugates these two elements, ie., $c_ip_ic_i^{-1}=q_{i}$. Also, there exists a word $z$ written over $\{x_1^{\pm 1}, \ldots, x_r^{\pm} \}$ such that $z \cdot x_1 \cdots x_r \cdot z^{-1}= y_1 \cdots y_s$. Setting $c=z c_1 \cdots c_n$, we have
$$\begin{array}{lcl} cac^{-1} & = & (zc_1 \cdots c_n) \cdot x_1 \cdots x_r p_1 \cdots p_r \cdot (zc_1 \cdots c_n)^{-1} \\ \\ & = & \left( z \cdot x_1 \cdots x_r \cdot z^{-1} \right) \cdot \left( c_1p_1c_1^{-1} \cdots c_np_nc_n^{-1} \right) \\ \\ & = &  y_1 \cdots y_s q_1 \cdots q_n = b \end{array}$$
Thus, we have proved that $a$ and $b$ are conjugate in $\Gamma \mathcal{G}$. 

\medskip \noindent
Conversely, suppose that $a$ and $b$ are conjugate. Fix an annular diagram $D$ whose inner and outer circles are labelled by $a$ and $b$ respectively.

\begin{claim}\label{claim:boundaryhyp}
Two edges of the same component of $\partial D$ cannot be dual to the same dual curve.
\end{claim}

\noindent
Suppose for contradiction that there exists a dual curve $\gamma$ of $D$ intersecting twice either $\partial_\text{inn} D$ or $\partial_\text{out}D$. We will assume that the latter case occurs, the other case being symmetric. Recall that $\partial_\text{out} D$ is labelled by $b$, which we write as a graphically cyclically reduced word $b_1 \cdots b_n$. Let $\sigma \subset \partial_\text{out} D$ denote a subsegment delimited by $\gamma$ which is included into a connected component of $D \backslash \gamma$ which does not contain $\partial_\text{inn}D$. If there exists a dual curve $\alpha$ intersecting $\sigma$ twice, then replace $\gamma$ with $\alpha$, and iterate. Without loss of generality, we may suppose that every dual curve of $D$ intersects $\sigma$ at most once. As a consequence, any dual curve intersecting $\sigma$ has to be transverse to $\gamma$. It follows from Lemmas \ref{lem:labelarc} and \ref{lem:labeltransverse} that either the basepoint of $\partial_\text{out}D$ does not belong to $\sigma$, so that there exist two indices $1 \leq i < j \leq n$ such that $b_j$ shuffles to the beginning in the word $b_{i+1} \cdots b_j$ and such that $r_i$ and $r_j$ belong to a common vertex-group; or the basepoint of $\partial_\text{out}D$ belongs to $\sigma$, so that there exist two indices $1 \leq i <j \leq n$ such that $b_i$ shuffles to the beginning in the word $b_1 \cdots b_i$, such that $b_j$ shuffles to the end in the word $b_j \cdots b_n$, and such that $b_i$ and $b_j$ belong to a common vertex-group. In both cases, we get a contradiction with the fact that the word $b_1 \cdots b_n$ is graphically cyclically reduced, concluding the proof of our claim. 

\medskip \noindent
Let us say that a dual curve of $D$ is \emph{circular} if it is a disjoint union of $k \geq 1$ embedded circles containing exactly two singularities together with $k+1$ arcs linking two consecutive circles, the outer circle to $\partial_{\text{out}} D$ and the inner circle to $\partial_{\text{inn}}D$. See Figure \ref{figure3}.
\begin{figure}
\begin{center}
\includegraphics[trim={0 1.5cm 30cm 0},clip,scale=0.25]{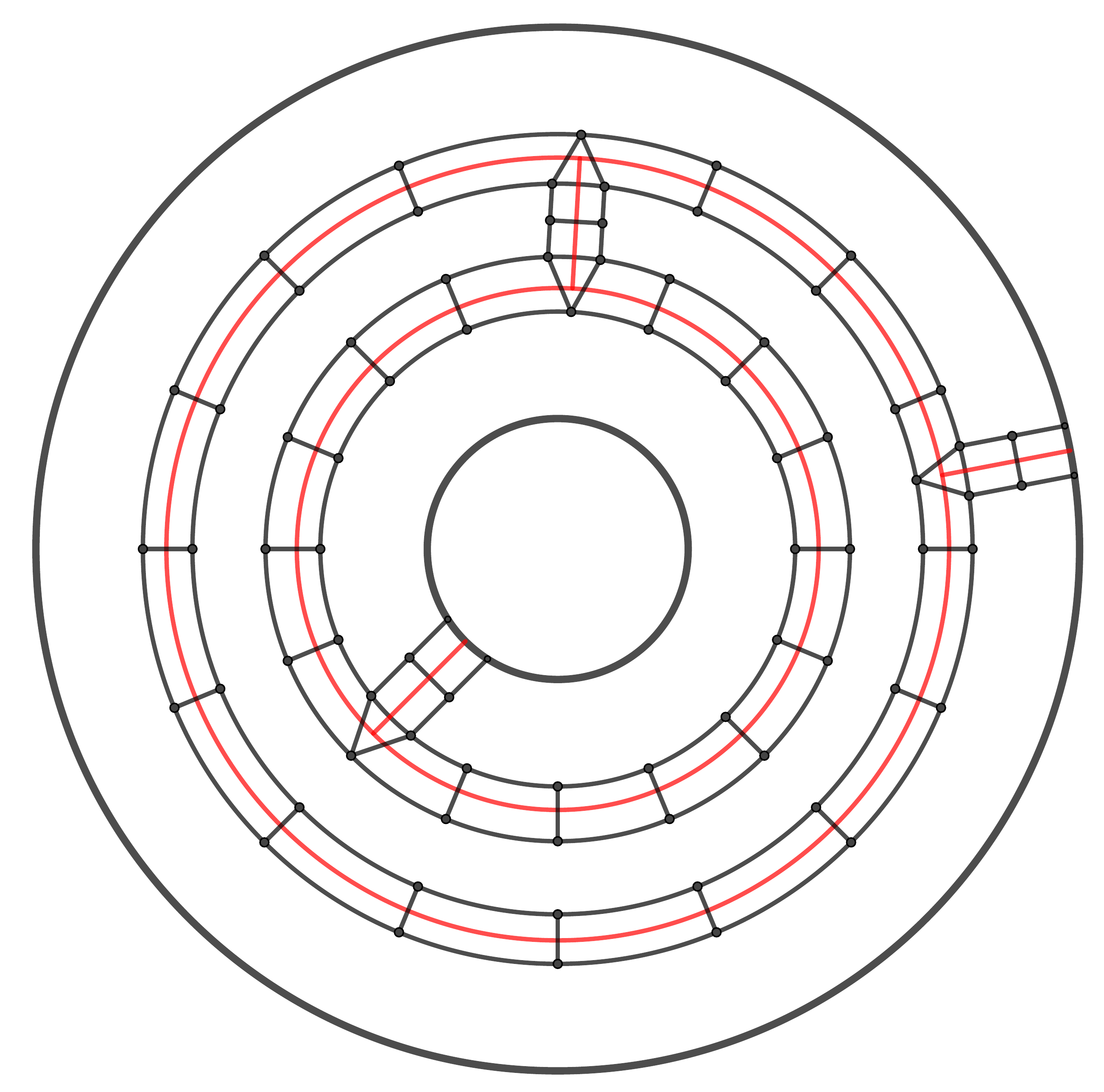}
\caption{A circular dual curve.}
\label{figure3}
\end{center}
\end{figure}

\begin{claim}
A dual curve intersecting $\partial D$ is either regular or circular.
\end{claim}

\noindent
Let $\gamma$ be a dual curve intersecting $\partial D$. Suppose that $\gamma$ is not regular. As a consequence, if $\gamma$ does not contain a circle, then it has to be a tree with at least three leaves, so that it has to intersect twice either $\partial_{\text{inn}} D$ or $\partial_{\text{out}} D$, contradicting Claim \ref{claim:boundaryhyp}. Therefore, $\gamma$ contains at least one circle. Let $C_1,C_2 \subset \gamma$ be two embedded circles which intersect. As a consequence of Proposition \ref{prop:circleindiag}, $D \backslash (C_1 \cup C_2)$ has to contain only two connected components: one containing $\partial_{\text{inn}} D$ and the other containing $\partial_{\text{out}} D$. By applying Euler's formula to the graph $C_1 \cup C_2$, it follows that its Euler characteristic has to be equal to zero. Otherwise saying, $C_1 \cup C_2$ can be constructed from a tree by adding an edge. Since $C_1 \cup C_2$ is leafless, the only possibility is that $C_1 \cup C_2$ is a circle, ie., $C_1=C_2$. Thus, we have proved that any two distinct embedded circles in $\gamma$ are disjoint. Let $C_1, \ldots, C_n$ denote the embedded circles of $\gamma$. As a consequence of Proposition \ref{prop:circleindiag}, we may index our circles so that $C_i$ separates $C_j$ from $\partial_\text{out} D$ for every $1 \leq i < j \leq n$. We suppose that $\gamma$ intersects $\partial_\text{out} D$, the case where $\gamma$ intersects $\partial_\text{inn}D$ being symmetric. So we know that $\gamma$ contains an arc connecting $\partial_\text{out} D$ to $C_1$, and as a consequence of Claim \ref{claim:boundaryhyp} there does not exist two such arcs. Next, because $\gamma$ is connected, for every $1 \leq i \leq n-1$ there must exist an arc connecting $C_i$ to $C_{i+1}$, and such an arc has to be unique according to Proposition \ref{prop:circleindiag}. Finally, since $C_n$ cannot contain a single singularity according to Claim \ref{claim:singlesing}, we know that $\gamma$ contains an arc connecting $C_n$ to $\partial_\text{inn} D$. Thus, we have proved that $\gamma$ is circular, concluding the proof of our claim. 

\medskip \noindent
Notice that a circular dual curve is transverse to any dual curve intersecting $\partial D$. It follows from Lemma \ref{lem:labeltransverse} that we can write $a$ as a graphically reduced word $x_1 \cdots x_r p_1 \cdots p_n$ where the $p_i$'s are the labels of the edges of $\partial_\text{out} D$ which are dual to circular dual curves (they are floating syllables, but a priori an $x_i$ may also be a floating syllable) and such that $x_1, \ldots, x_r$ appear in that order in the cyclic orientation of $\partial_\text{out}D$. Similarly, we can write $b$ as a graphically reduced word $y_1 \cdots y_s q_1 \cdots q_m$ where the $q_i$'s are the labels of the edges of $\partial_\text{in} D$ which are dual to circular dual curves and such that $y_1, \ldots, y_s$ appear in that order in the cyclic orientation of $\partial_\text{inn}D$. For every $1 \leq i \leq n$, let $\sigma(i)$ be such that $q_{\sigma(i)}$ is the label of the edge of $\partial_\text{in}D$ which is dual to the circular dual curve dual to the edge of $\partial_\text{out} D$ labelled by $p_i$. By construction, the map $p_i \mapsto q_{\sigma(i)}$ defines a bijection from $\{p_1, \ldots, p_n\}$ to $\{q_1, \ldots, q_m\}$. As a consequence, the indices $n$ and $m$ have to be equal. For every $1 \leq i \leq n$, $p_i$ and $q_{\sigma(i)}$ belong to the same vertex-group, say $G_i$, because they label two edges crossed by the same dual curve. Because the $q_i$'s pairwise commute, we will suppose without loss of generality that $\sigma(i)=i$ for every $1 \leq i \leq n$. 

\begin{claim}\label{claim:FloatingConj}
For every $1 \leq i \leq n$, $p_i$ and $q_{i}$ are conjugate in $G_i$.
\end{claim}

\noindent
We are claiming that the two elements labelling the arcs of a circular dual curve which intersect $\partial_\text{inn}D$ and $\partial_\text{out}D$ are conjugate in the corresponding vertex-group. It is sufficient to consider the case where the dual curve contains a single embedded circle, the general case following by induction on the number of embedded circles. Otherwise saying, we have to prove that the elements $c$ and $d$ in Figure \ref{figure4} are conjugate. But we have $d=a^{-1}b$ and $c=ba^{-1}$, hence $b^{-1} \cdot c \cdot b=d$. So our claim is proved.
\begin{figure}
\begin{center}
\includegraphics[trim={0 12cm 41cm 0},clip,scale=0.45]{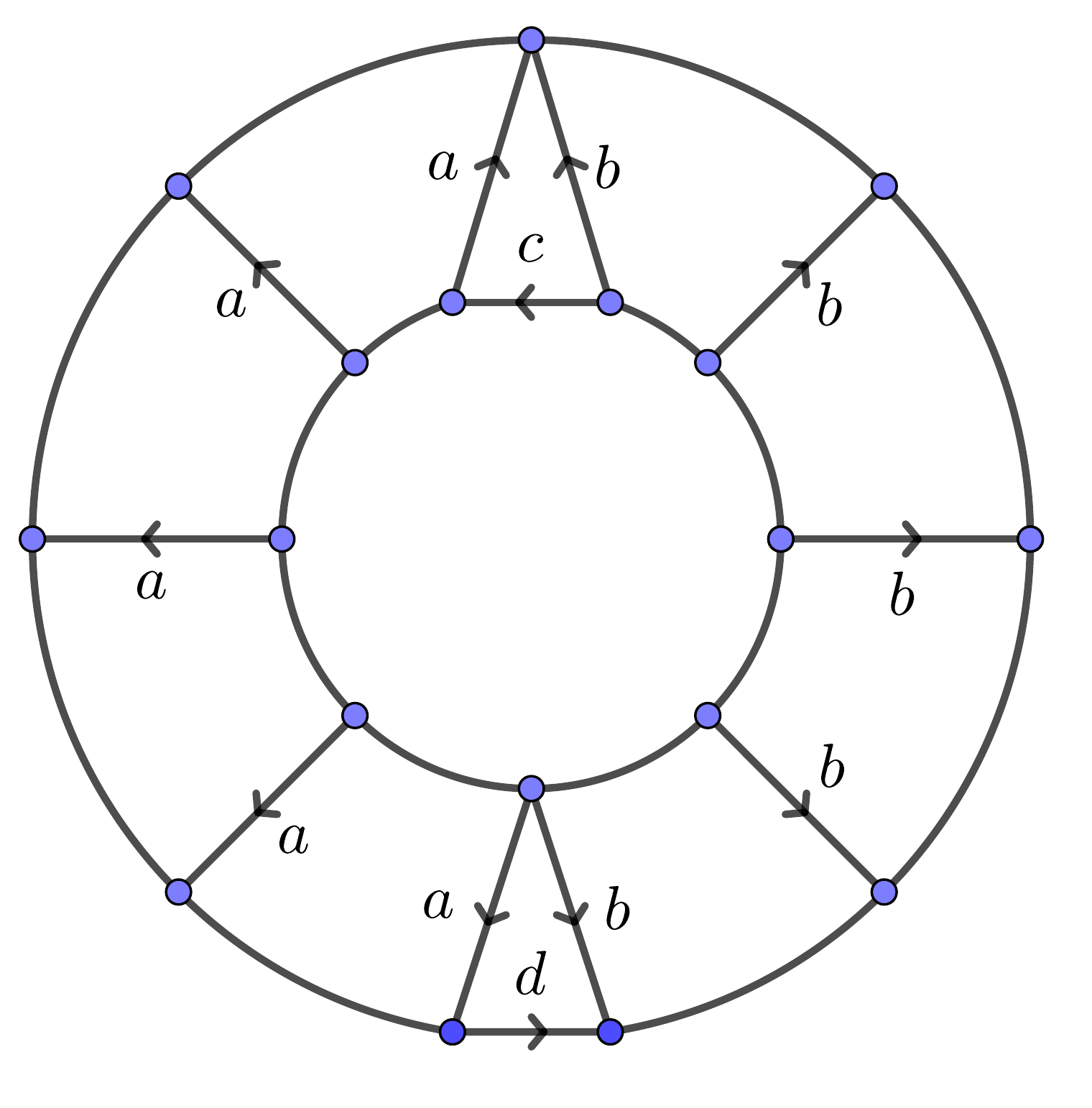}
\caption{Configuration used in the proof of Claim \ref{claim:FloatingConj}.}
\label{figure4}
\end{center}
\end{figure}


\begin{claim}\label{claim:permutation}
The word $y_1 \cdots y_s$ can be obtained from $x_1 \cdots x_r$ by permuting two consecutive syllables which belong to adjacent vertex-groups and by performing cyclic permutations. 
\end{claim}

\noindent
Let $C_1, \ldots, C_k$ be circles around $\partial_\text{inn} D$ such that, if we set $C_0=\partial_\text{inn}D$ and $C_{k+1}= \partial_\text{out} D$, then
\begin{itemize}
	\item each $C_i$ intersects the regular dual curve of $D$ transversely;
	\item the subspace delimited by $C_i$ and $C_{i+1}$ contains exactly one intersection point between two regular curves. 
\end{itemize}
We fix a basepoint on each $C_i$; for $i=0$ and $i=k+1$, we assume that this basepoint coincides with the basepoint which already exists. See Figure \ref{cyclicword}. For every $0 \leq i \leq k+1$, let $m_i$ denote the word obtained by reading the labels of the dual curves intersecting $C_i$. In particular, $m_0=x_1 \cdots x_r$ and $m_{k+1}= y_1 \cdots y_r$. Fix an index $0 \leq i \leq k$. If the basepoint of $C_i$ is not between the two dual curves which intersect between $C_i$ and $C_{i+1}$, then $m_{i+1}$ is obtained from $m_i$ by permuting two consecutive letters which label two transverse dual curves and next by performing a cyclic permutation on the word we obtain (due to the basepoint of $C_{i+1}$). It follows from Lemma \ref{lem:labeltransverse} that the first permutation amounts to permute two consecutive syllables which belong to two adjacent vertex-groups. Next, if the basepoint  of $C_i$ is between the two dual curves which intersect between $C_i$ and $C_{i+1}$, then we can write $m_i=amb$, where $m$ is some word and where $a$ and $b$ are the two letters labelling the dual curves which intersect between $C_i$ and $C_{i+1}$, such that $m_{i+1}$ is a cyclic permutation of $bma$ (due to the basepoint of $C_{i+1}$). Notice that
$$m_i=amb \to bam \to abm \to bma$$
are cyclic permutations and permutations of two consecutive syllables which belong to adjacent vertex-groups (once again as a consequence of Lemma \ref{lem:labeltransverse}). The proof of our claim is complete.
\begin{figure}
\begin{center}
\includegraphics[trim={0 9cm 38cm 0},clip,scale=0.45]{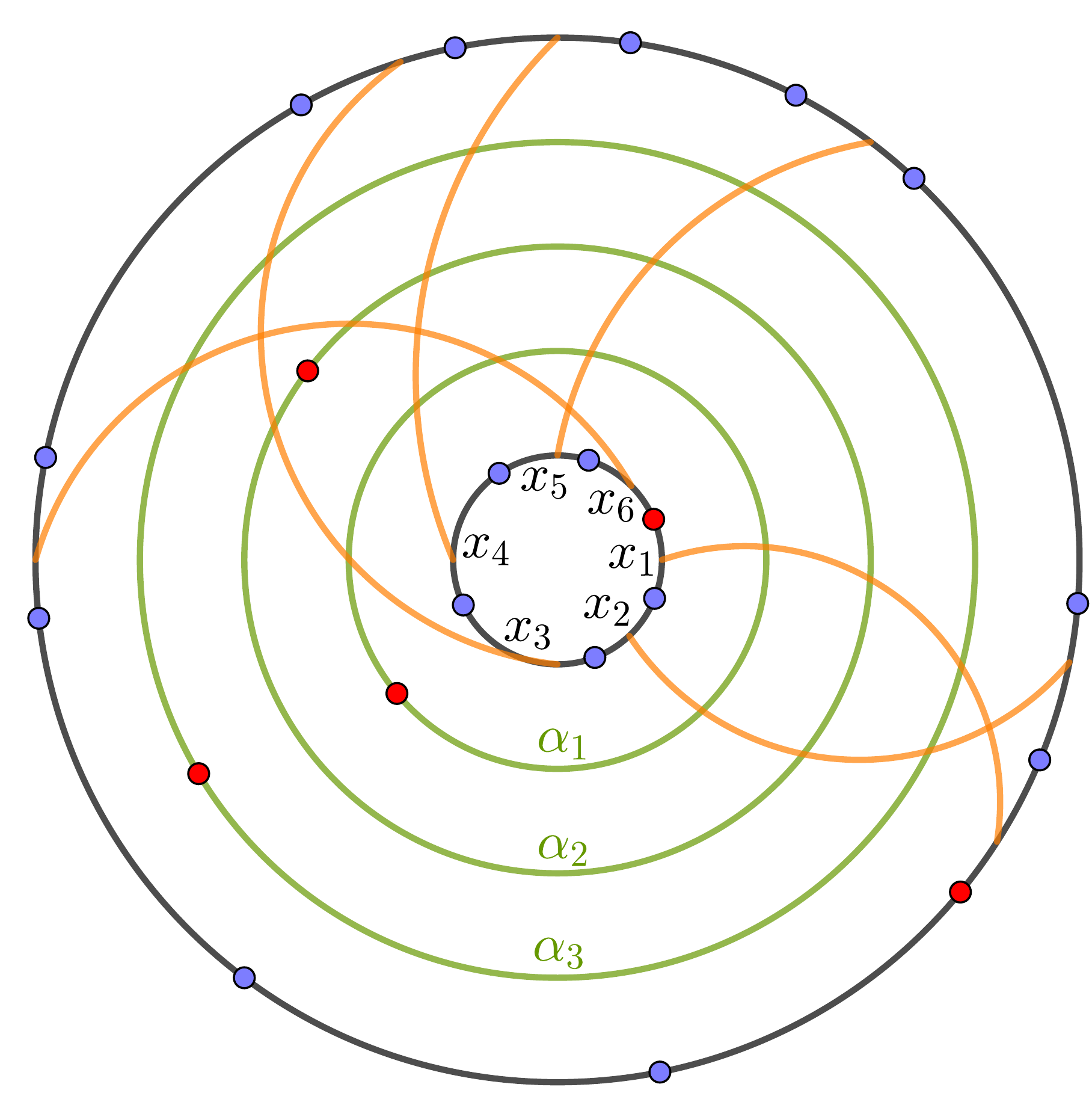}
\caption{The words labelling the circles are $m_0=x_1x_2x_3x_4x_5x_6$, $m_1= x_3x_4x_6x_5x_1x_2$, $m_2= x_6x_4x_5x_1x_2x_3$, $m_3= x_6x_3x_4x_5x_1x_2$, and $m_4= x_6x_3x_4x_5x_2x_1$.}
\label{cyclicword}
\end{center}
\end{figure}

\medskip \noindent
The combination of Claims \ref{claim:FloatingConj} and \ref{claim:permutation} concludes the proof of our theorem.
\end{proof}

\noindent
As an application of Theorem \ref{thm:conjugacy}, let us show that having a solvable conjugacy problem is a property which is stable under graph products. 

\begin{cor}
Let $\Gamma$ be a simplicial graph and $\mathcal{G}$ a collection of groups indexed by $V(\Gamma)$. Assume that the conjugacy problem is solvable in every vertex-group. Then the conjugacy problem is also solvable in $\Gamma \mathcal{G}$. 
\end{cor}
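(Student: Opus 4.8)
The plan is to reduce an arbitrary instance of the conjugacy problem to the situation described by Theorem~\ref{thm:conjugacy}, where every ingredient can be checked algorithmically using the assumed solvable conjugacy (and hence word) problems in the vertex-groups. First I would observe that, by Corollary~\ref{cor:WP}, a solvable conjugacy problem in each $G_u$ in particular yields a solvable word problem in each $G_u$, and therefore a solvable word problem in $\Gamma\mathcal{G}$; in particular, given a word over $\bigcup_u X_u$ we can effectively compute a graphically reduced word representing the same element (iterating the two shortening operations from the proof of Corollary~\ref{cor:WP} and using the algorithms in the vertex-groups to test triviality of syllables). Then, using Remark~\ref{remark}, the constructive proof of Lemma~\ref{lem:graphicallycyclicallyreduced} turns each of the two input elements $a,b$ into conjugate \emph{graphically cyclically reduced} words; since conjugacy is unchanged by this reduction, it suffices to decide conjugacy of two graphically cyclically reduced elements.

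Next I would write each of these two elements as $x_1\cdots x_r p_1\cdots p_n$ and $y_1\cdots y_{r'} q_1\cdots q_{n'}$, where the $p_i$'s (resp.\ $q_j$'s) are exactly the floating syllables; whether a given syllable is floating can be read off directly from $\Gamma$ and the (already computed) graphically reduced word, so this decomposition is effective. By Theorem~\ref{thm:conjugacy}, $a$ and $b$ are conjugate if and only if: $r=r'$ and $n=n'$; there is a permutation $\sigma\in S_n$ with $p_i$ and $q_{\sigma(i)}$ conjugate in a common vertex-group; and $y_1\cdots y_r$ is obtainable from $x_1\cdots x_r$ by shuffles (permutations of consecutive syllables in adjacent vertex-groups) together with cyclic permutations. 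The first condition is trivially checkable. The second is checkable because each $p_i,q_j$ lies in a single vertex-group and there we have a conjugacy algorithm by hypothesis; one then checks whether a matching permutation exists (e.g.\ by building the bipartite "conjugate-in-a-vertex-group" relation and testing for a perfect matching, or simply brute force over $S_n$ since $n$ is bounded by the word lengths). The third condition is decidable because the set of graphically reduced words representing a fixed element of $\Gamma\mathcal{G}$ is \emph{finite} and, by Corollary~\ref{cor:graphicallyreduced}, is exactly the shuffle-equivalence class of $x_1\cdots x_r$; combined with the at most $r$ possible cyclic rotations, this gives a finite, explicitly enumerable set of words to compare with $y_1\cdots y_r$.

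The main obstacle is the bookkeeping in the third step: one must be careful that "permuting consecutive syllables in adjacent vertex-groups and performing cyclic permutations" really generates a finite, algorithmically enumerable set. The key point making this work is Corollary~\ref{cor:graphicallyreduced} (uniqueness of graphically reduced words up to shuffles): fixing any representative $x_1\cdots x_r$ of a cyclic rotation of $a$'s non-floating part, the shuffle-class is finite (bounded by $r!$) and can be generated by repeatedly applying admissible transpositions until no new word appears; then one tests membership of $y_1\cdots y_r$ in the union of these classes over all $r$ cyclic rotations. Since all the words involved have length bounded by the input, and every test (triviality, conjugacy within a vertex-group, adjacency in $\Gamma$) is decidable by hypothesis, the whole procedure terminates, which proves that the conjugacy problem in $\Gamma\mathcal{G}$ is solvable. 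I would also remark in passing that this recovers \cite[Theorem~3.24]{GreenGP} without appealing to the general machinery for amalgamated products.
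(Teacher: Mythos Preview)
Your proposal is correct and follows essentially the same route as the paper: reduce to graphically cyclically reduced words via Lemma~\ref{lem:graphicallycyclicallyreduced} and Remark~\ref{remark}, then apply Theorem~\ref{thm:conjugacy} and check each of its three conditions algorithmically using the conjugacy (hence word) algorithms in the vertex-groups together with a finite enumeration of shuffle/cyclic-permutation classes. The one point where you diverge from the paper is that you extract \emph{all} floating syllables at once and assert that the criterion of Theorem~\ref{thm:conjugacy} holds with this specific decomposition, whereas the paper's algorithm loops over all sizes $0\le n\le N$ of floating-syllable subsets; your shortcut is valid (a floating syllable left among the $x_i$'s can always be transferred to the $p_i$'s while preserving all three conditions, since it is unique in its vertex-group and commutes with everything), but strictly speaking that step is not stated in Theorem~\ref{thm:conjugacy} and deserves a one-line justification.
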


\begin{proof}
Let $a_0,b_0 \in \Gamma \mathcal{G}$ be two elements. As a consequence of Lemma \ref{lem:graphicallycyclicallyreduced} (see also Remark \ref{remark}), there exists an algorithm producing two graphically cyclically reduced elements $a$ and $b$ which are conjugate to $a_0$ and $b_0$ respectively. If $a$ and $b$ do not have the same number of floating syllables, then Theorem \ref{thm:conjugacy} implies that $a_0$ and $b_0$ are not conjugate. Otherwise, let $N$ denote their common number of floating syllables. For every $0 \leq n \leq N$, pick up $n$ floating syllables $p_1, \ldots, p_n$ of $a$, and for every $1 \leq i \leq n$, find a floating syllable $q_i$ of $b$ such that $p_i$ and $q_i$ are conjugate in a vertex-group thanks to the available algorithms which solve the conjugacy problems in vertex-groups. (If two such $n$-tuples do not exist, then Theorem \ref{thm:conjugacy} implies that $a_0$ and $b_0$ are not conjugate.) Write $a$ and $b$ respectively as graphically reduced words $a_1 \cdots a_r p_1 \cdots p_n$ and $b_1 \cdots b_s q_1 \cdots q_n$. Check if $a_1 \cdots a_r$ can be constructed from $b_1 \cdots b_s$ by permuting consecutive syllables which belong to adjacent vertex-groups and by performing cyclic permutations. If it is the case, then $a_0$ and $b_0$ are conjugate. Otherwise, repeat the operation with another tuples of floating syllables. When all the possibility have been considered without concluding that $a_0$ and $b_0$ are conjugate, we deduce from Theorem \ref{thm:conjugacy} that $a_0$ and $b_0$ are not conjugate.
\end{proof}

\noindent
Let us illustrate the solution to the conjugacy problem on a concrete example.

\begin{ex}\label{ex:conj}
Let $\Gamma \mathcal{G}$ be the graph product illustrated by Figure \ref{ex}. We claim that the elements $g=eab^2c^2ae^{-1}$ and $h=e^{-1}ba^2c^3eb$ are conjugate in $\Gamma \mathcal{G}$. First, notice that
$$g = e a^{-1} \cdot a^2b^2c^2 \cdot a e^{-1} \ \text{and} \ h= e^{-1}b^{-1} \cdot b^2a^2c^3 \cdot be$$
where $a^2b^2c^2$ and $b^2a^2c^3$ are graphically cyclically reduced. Next, notice that $c^2$ and $c^3$ are floating syllables of $a^2b^2c^2$ and $b^2a^2c^3$ respectively, that $a^2b^2$ is a cyclic permutation of $b^2a^2$, and that $c^2$ and $c^3$ are conjugate in $C$. The desired conclusion follows.
\begin{figure}
\begin{center}
\includegraphics[trim={0 21cm 39cm 0},clip,scale=0.45]{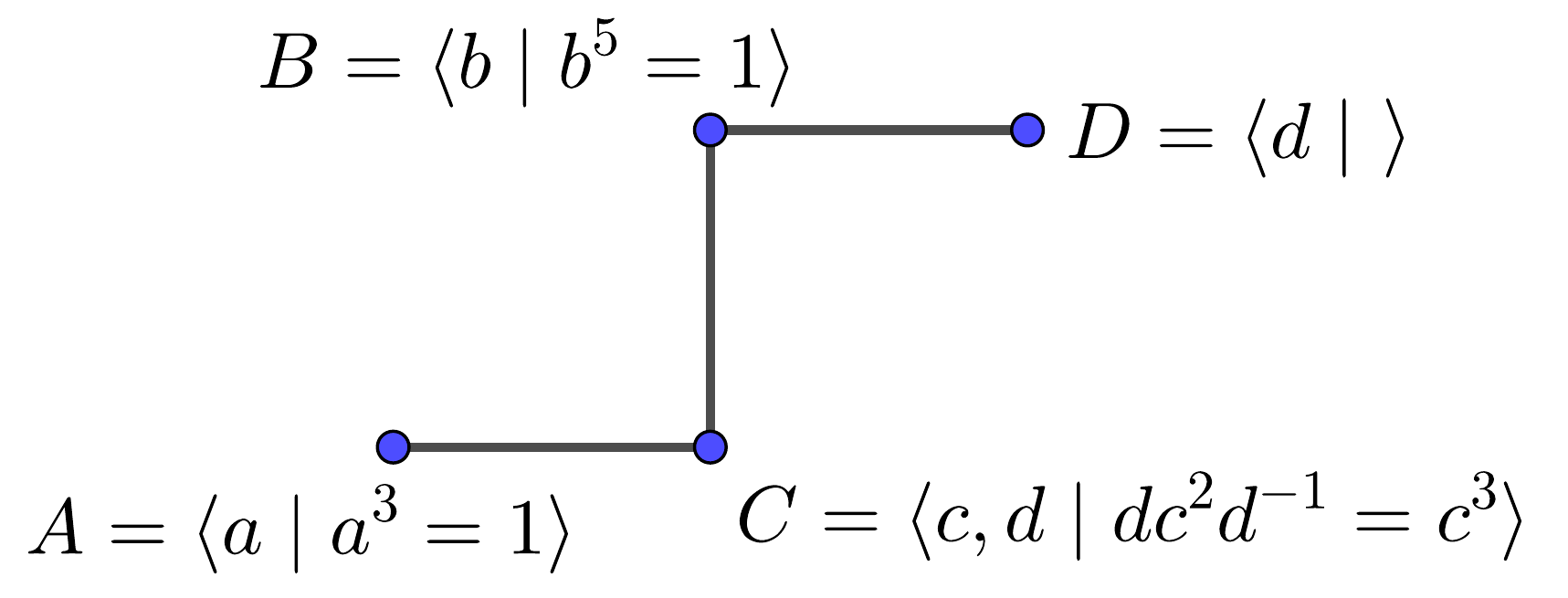}
\caption{The graph products from Example \ref{ex:conj}.}
\label{ex}
\end{center}
\end{figure}
\end{ex}

\noindent
As a direct consequence of Theorem \ref{thm:conjugacy}, it is very easy to determine whether or not two graphically cyclically reduced words which do not contain floating syllables are conjugate, since it is sufficient to know if one of these words can be obtained from the other by permuting consecutive syllables which belong to adjacent vertex-groups and by performing cyclic permutations. Because this operation only requires to be able to solve the word problem in vertex-groups, it follows that this partial conjugacy problem can be solved even if the conjugacy problem is not solvable in some vertex-groups. More precisely:

\begin{cor}
Let $\Gamma$ be a simplicial graph and $\mathcal{G}$ a collection of groups indexed by $V(\Gamma)$. Assume that the word problem is solvable in every vertex-group. There exists an algorithm deciding whether or not two graphically reduced words which do not contain floating syllables are conjugate.
\end{cor}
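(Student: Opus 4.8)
The plan is to deduce the statement from Theorem~\ref{thm:conjugacy}: when there are no floating syllables, the conjugacy criterion it provides is purely combinatorial, and the only non-combinatorial ingredient needed to check it is equality of syllables, which is exactly the word problem in the vertex-groups.

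First I would reduce to graphically cyclically reduced words. Given two graphically reduced input words without floating syllables, Lemma~\ref{lem:graphicallycyclicallyreduced} together with Remark~\ref{remark} yields an explicit procedure --- using only the word problem in the vertex-groups, which is needed to detect trivial syllables and hence to recognise graphically (cyclically) reduced words --- producing conjugate graphically cyclically reduced words $a = s_1 \cdots s_m$ and $b = t_1 \cdots t_{m'}$. One must observe that this reduction keeps us within the class of words without floating syllables, so that Theorem~\ref{thm:conjugacy} applies with empty ``floating parts''; the syllables of $a$ are obtained by merging syllables of the input inside common vertex-groups, so checking this amounts to an elementary verification (and in any case this is precisely the situation discussed just before the corollary). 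If $m \neq m'$ the algorithm stops and answers ``not conjugate''.

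Now, since $a$ and $b$ have no floating syllables, Theorem~\ref{thm:conjugacy} says that $a$ and $b$ are conjugate in $\Gamma\mathcal{G}$ if and only if the word $t_1 \cdots t_{m'}$ can be obtained from $s_1 \cdots s_m$ by repeatedly transposing two consecutive syllables lying in adjacent vertex-groups and by performing cyclic permutations. To decide this, let $S$ be the set of words reachable from $s_1 \cdots s_m$ under these two operations. Every word in $S$ has length $m$ and its syllables form the same finite multiset of elements of $\bigcup_{u \in V(\Gamma)} G_u$ as $s_1 \cdots s_m$, so $S$ is finite; it is computed by a breadth-first search from $s_1 \cdots s_m$ in which, at each word, one adjoins its $m$ cyclic rotations and, for each pair of consecutive positions whose labelling vertices are adjacent in $\Gamma$ (a finite check, $\Gamma$ being given), the corresponding transposed word. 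Recognising when two generated words coincide, and ultimately testing whether $t_1 \cdots t_{m'} \in S$, only requires deciding equalities between syllables, i.e.\ finitely many calls to the word problem in the vertex-groups; the algorithm answers ``conjugate'' exactly when $t_1 \cdots t_{m'} \in S$.

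The step I expect to require the most care is the reduction to graphically cyclically reduced representatives: one needs it to terminate, to be algorithmic using only the word problem (both guaranteed by Lemma~\ref{lem:graphicallycyclicallyreduced} and Remark~\ref{remark}), and to preserve the absence of floating syllables --- for otherwise comparing the resulting floating syllables would require the conjugacy problem in a vertex-group, which is not available here. Everything past that point is a finite enumeration whose only arithmetic is syllable equality-testing, supplied by the hypothesis that the word problem is solvable in each vertex-group.
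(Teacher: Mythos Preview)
Your core enumeration argument (the breadth-first search over words reachable by adjacent-syllable transpositions and cyclic permutations, with syllable-equality testing via the word problem) is correct and is exactly what the paper has in mind: the paper's own proof is just a \qed{} following the paragraph that explains this for graphically \emph{cyclically} reduced words.

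The gap is in your reduction step. You assert that passing from the input graphically reduced words to graphically cyclically reduced conjugates ``keeps us within the class of words without floating syllables,'' but this is false. Take two non-adjacent vertices $u,v\in V(\Gamma)$, pick $a\in G_u\setminus\{1\}$ and $b\in G_v\setminus\{1\}$, and consider $w=aba^{-1}$. This word is graphically reduced, and none of its three syllables is floating (for instance $b$ is not floating since $u$ and $v$ are non-adjacent). Yet the procedure of Lemma~\ref{lem:graphicallycyclicallyreduced} conjugates $w$ by $a^{-1}$ and outputs the single-syllable word $b$, whose unique syllable is vacuously floating. Worse, two such inputs $aba^{-1}$ and $a'b'a'^{-1}$ (with $a,a'\in G_u$, $b,b'\in G_v$) are conjugate in $\Gamma\mathcal{G}$ if and only if $b$ and $b'$ are conjugate in $G_v$, so deciding their conjugacy genuinely requires the conjugacy problem in $G_v$, not just its word problem. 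Thus the step you flagged as ``requiring the most care'' actually fails, and with it the proof as written.

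In light of this, the paper's \qed{} should be read as applying to graphically \emph{cyclically} reduced inputs (as in the sentence immediately preceding the corollary); for such inputs your second and third paragraphs already constitute a complete proof, and the reduction step is both unnecessary and unjustified. If you insist on treating the statement literally as written (merely graphically reduced), the example above shows that the assertion is problematic, not that your argument is incomplete.
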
\qed

\section{Conjugation length functions}\label{section:CLF}

\noindent
In this section, our goal is to estimate the conjugacy length function of a graph product. First, recall that:

\begin{definition}
Let $G$ be a group and $S \subset G$ a finite generating set. The \emph{conjugacy length function} of $G$ (with respect to $S$) is
$$\mathrm{CLF}_G : n \mapsto \max\limits_{a,b \in G, \ \|a\|_S + \|b \|_S \leq n} \ \min\limits_{c \in G, \ cac^{-1}=b} \|c \|_S$$
\end{definition}

\noindent
Before stating and proving the main result of this section, the following definition is needed:

\begin{definition}
Let $\Gamma$ be a simplicial graph, $\mathcal{G}$ a collection of groups indexed by $V(\Gamma)$ and $a \in \Gamma \mathcal{G}$ an element. The \emph{floating set} of $a$ is the set of vertices of $\Gamma$ labelled by the vertex-groups which contain the floating syllables of any graphically cyclically reduced word in the conjugacy class of $a$. 
\end{definition}

\noindent
Notice that this definition makes sense according to Theorem \ref{thm:conjugacy}, ie., the floating set of an element does not depend on the graphically cyclically reduced word we choose. 

\medskip \noindent
If $\Gamma$ is a finite simplicial graph and $\mathcal{G}$ a collection of finitely generated groups indexed by $V(\Gamma)$, we endow $\Gamma \mathcal{G}$ with a finite generating set of the form $\bigcup\limits_{u \in V(\Gamma)} X_u$ where $X_u$ is an arbitrary finite generating set of $G_u$ for every $u \in V(\Gamma)$. We denote by $\| \cdot \|$ the corresponding word length and by $\mathrm{CLF}_{\Gamma \mathcal{G}}$ the corresponding conjugacy length function. The main result of this section is:

\begin{thm}\label{thm:CLF}
Let $\Gamma$ be a finite graph and $\mathcal{G}$ a collection of finitely generated groups indexed by $V(\Gamma)$. If two elements $a,b\in \Gamma \mathcal{G}$ are conjugate, then there exists an element $c \in \Gamma \mathcal{G}$ satisfying $cac^{-1}=b$ such that
$$\|c \| \leq (D+1) \cdot \left( \| a \| + \|b \| \right) + \sum\limits_{u \in F} \mathrm{CLF}_{G_u}(\|a\|+ \| b \|)$$
where $D$ is the maximal diameter of a connected component of $\Gamma^{\mathrm{opp}}$ and where $F$ denotes the floating set of $a$ (or equivalently, of $b$). 
\end{thm}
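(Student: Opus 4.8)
The plan is to prove Theorem~\ref{thm:CLF} by invoking Lemma~\ref{lem:graphicallycyclicallyreduced} to pass to graphically cyclically reduced representatives, then applying Theorem~\ref{thm:conjugacy} to get explicit control on a conjugator, and finally bounding the length of that conjugator in terms of the pieces provided by Theorem~\ref{thm:conjugacy}. More precisely: by Lemma~\ref{lem:graphicallycyclicallyreduced}, write $a = g a' g^{-1}$ and $b = h b' h^{-1}$ with $a',b'$ graphically cyclically reduced. The first key point is that the conjugating elements $g$ and $h$ produced by the proof of that lemma are short: each shuffle-and-conjugate step strictly shortens the syllable word, and each step contributes a conjugating syllable whose $X_u$-length is bounded by the length of the syllable it moves, so $\|g\| \leq \|a\|$ and $\|h\| \leq \|b\|$ (using Claim~\ref{claim:length} to relate syllable lengths to word lengths). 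In particular $\|a'\| \leq \|a\|$ and $\|b'\| \leq \|b\|$. So it suffices to find a short conjugator between $a'$ and $b'$ and then conjugate by $g^{-1}$ and $h$.

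\medskip
Now apply Theorem~\ref{thm:conjugacy} to $a'$ and $b'$: write $a' = x_1 \cdots x_r p_1 \cdots p_n$ and $b' = y_1 \cdots y_r q_1 \cdots q_n$, where the $p_i, q_i$ are floating syllables, $p_i$ is conjugate to $q_i$ in a vertex-group $G_{u_i}$ with $u_i \in F$, and $y_1 \cdots y_r$ is obtained from $x_1 \cdots x_r$ by permutations of adjacent syllables and cyclic permutations. The conjugator built in the proof of Theorem~\ref{thm:conjugacy} has the form $c = z c_1 \cdots c_n$, where $c_i \in G_{u_i}$ conjugates $p_i$ to $q_i$ and can be chosen with $\|c_i\|_{X_{u_i}} \leq \mathrm{CLF}_{G_{u_i}}(\|p_i\| + \|q_i\|) \leq \mathrm{CLF}_{G_{u_i}}(\|a\|+\|b\|)$ by definition of the conjugacy length function (using that $\mathrm{CLF}$ is non-decreasing, which one may assume after replacing it by its non-decreasing envelope at no cost). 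Summing over $i$ and noting the $u_i$ are distinct elements of $F$ (distinct floating syllables of a graphically reduced word lie in distinct vertex-groups) gives $\sum_i \|c_i\| \leq \sum_{u \in F} \mathrm{CLF}_{G_u}(\|a\|+\|b\|)$.

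\medskip
The remaining and main obstacle is bounding $\|z\|$, the part of the conjugator that realizes the combinatorial transformation of $x_1 \cdots x_r$ into $y_1 \cdots y_r$ via syllable-permutations and cyclic permutations. The point is that cyclically permuting a syllable past the rest of the word requires conjugating by that syllable, so naively one conjugating factor per cyclic move costs up to $\|a'\|$ each, and there could be many moves. The fix is to observe that the \emph{net} transformation can be realized economically: $y_1 \cdots y_r$ is a cyclic rotation of a word obtained from $x_1 \cdots x_r$ by adjacent transpositions, and by Corollary~\ref{cor:graphicallyreduced} two graphically reduced words for the same element of the subgroup generated by the non-floating part differ only by such transpositions, which are \emph{free} (conjugation by the identity). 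Thus $z$ may be taken to be a single product of the form $x_{i_1} \cdots x_{i_k}$ — the prefix that gets rotated to the back — so that $\|z\| \leq \|x_1 \cdots x_r\| \leq \|a'\| \leq \|a\|$. Here the diameter $D$ of the components of $\Gamma^{\mathrm{opp}}$ enters: one shows, using the structure of dual curves in the annular diagram (a circular dual curve is transverse to every dual curve meeting $\partial D$, so floating syllables correspond to vertices adjacent to everything, hence isolated in the relevant component of $\Gamma^{\mathrm{opp}}$, while the non-floating syllables within a single component can be cyclically rearranged with total cost at most $D$ times the length), that the required rotation can be absorbed component-by-component with the $(D+1)$ factor accounting for the cost of threading each cyclic permutation through a component of $\Gamma^{\mathrm{opp}}$. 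Putting the pieces together,
$$\|c\| \leq \|g\| + \|h\| + \|z\| + \sum_i \|c_i\| \leq \|a\| + \|b\| + (D+1)(\|a\|+\|b\|) + \sum_{u \in F}\mathrm{CLF}_{G_u}(\|a\|+\|b\|),$$
and after absorbing the linear terms one obtains the stated bound. The delicate step is making the $(D+1)$ bookkeeping in the cyclic-permutation argument precise; I expect this to require a careful induction on the components of $\Gamma^{\mathrm{opp}}$, rotating the syllables of one component at a time and checking that syllables of other components (which commute with it) are not disturbed.
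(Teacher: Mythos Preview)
Your overall architecture matches the paper's: reduce to graphically cyclically reduced representatives, apply Theorem~\ref{thm:conjugacy}, and bound the conjugator as a product of (i) the reducing conjugators $g,h$, (ii) a ``cyclic-permutation'' conjugator $z$ for the non-floating part, and (iii) short vertex-group conjugators $c_i$ for the floating syllables. Parts (i) and (iii) are essentially as in the paper, though the paper proves a sharper quantitative version of Lemma~\ref{lem:graphicallycyclicallyreduced} (namely Proposition~\ref{prop:CLF}) giving $\|g\|\leq\|a\|/2$ and $\|h\|\leq\|b\|/2$; your claim $\|g\|\leq\|a\|$ from iterating the lemma is plausible but not justified by the argument you sketch, since after one step the merged syllable $(a_ja_i)$ may itself be moved later, so the sum of lengths of moved syllables is not obviously bounded by $\|a\|$.

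The genuine gap is in (ii). Your assertion that ``$y_1\cdots y_r$ is a cyclic rotation of a word obtained from $x_1\cdots x_r$ by adjacent transpositions'' is false in general: Theorem~\ref{thm:conjugacy} only gives a \emph{sequence} of transpositions and single-letter cyclic shifts, and these operations do not commute, so the cyclic shifts cannot be collected into one rotation. (A single rotation would give $\|z\|\leq\|a'\|$ with no dependence on $D$, which is too good.) The paper closes this gap by invoking a theorem of Duboc on trace monoids: two words that are related by commutations and cyclic shifts can be connected using at most $D$ cyclic shifts, where $D$ is the maximal diameter of a component of $\Gamma^{\mathrm{opp}}$. This yields a conjugator $d$ written over $\{x_1,\ldots,x_r\}$ with $\|d\|\leq D\cdot\|x_1\cdots x_r\|\leq \tfrac{D}{2}(\|a\|+\|b\|)$. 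Your vague ``component-by-component induction'' is gesturing toward the content of Duboc's theorem, but you have not supplied the argument, and this is precisely where the constant $D$ comes from. Finally, even granting your claimed bounds, your displayed inequality gives $(D+2)(\|a\|+\|b\|)$, not $(D+1)(\|a\|+\|b\|)$; the paper's sharper bound on $\|g\|,\|h\|$ from Proposition~\ref{prop:CLF} is what makes the constants fit.
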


\noindent
In this statement, the \emph{opposite graph} $\Gamma^{\mathrm{opp}}$ is the graph whose vertex-set is $V(\Gamma)$ and whose edges link two vertices if they are not adjacent in $\Gamma$. We begin by proving a quantified version of Lemma \ref{lem:graphicallycyclicallyreduced}.

\begin{prop}\label{prop:CLF}
For every $a \in \Gamma \mathcal{G}$, there exist $b,g \in \Gamma \mathcal{G}$ such that $a=gbg^{-1}$, such that $b$ is graphically cyclically reduced, and such that $\|b\| \leq \|a\|$ and $ \|g\| \leq \|a\|/2$. 
\end{prop}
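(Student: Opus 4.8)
The plan is to quantify the iterative conjugation argument already used in the proof of Lemma~\ref{lem:graphicallycyclicallyreduced}. Starting from a shortest word representing $a$, I would write $a=a_1\cdots a_n$ graphically reduced with $n=\|a\|$ (after fixing the finite generating set, syllables are just letters). If this word is not graphically cyclically reduced, there are indices $i<j$ with $a_i,a_j$ in a common vertex-group, $a_i$ shuffling to the front of $a_1\cdots a_i$ and $a_j$ shuffling to the back of $a_j\cdots a_n$, so that $a = a_i \cdot a' \cdot a_i^{-1}$ where $a' = a_1\cdots a_{i-1}a_{i+1}\cdots a_{j-1}a_{j+1}\cdots a_n(a_ja_i)$ has length at most $n-1$. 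The key bookkeeping observation is that the total conjugating element accumulated after several such steps is $g = a_i^{(1)} a_i^{(2)} \cdots a_i^{(k)}$, a product of syllables each of which is (a shuffle of) a syllable that has been \emph{removed} from the evolving word; since the word loses at least one letter per step and its length never increases, at most $n/2$ letters can be peeled off in total before the word becomes graphically cyclically reduced (each peeled letter $a_i$ is matched with a distinct surviving partner $a_j$ consumed in the merge $a_ja_i$, so peeled and absorbed letters are disjoint and together number at most $n$). Hence $\|g\| \le n/2 = \|a\|/2$, and $\|b\| \le \|a\|$ since the length is nonincreasing along the process.

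The one point that needs a little care is the claim that the peeled-off syllables are "disjoint" from something, so that their count is bounded by $\|a\|/2$ rather than just $\|a\|$. The clean way to see this is to track, for the word at each stage, the quantity (length of current word) $+$ (number of letters peeled off so far): I would check that each elementary step leaves this quantity nonincreasing — the current word drops by at least one in length at each merge, while the peel-count increases by exactly one — so it stays $\le n$ throughout. When the process terminates the current word has length $\ge 1$, but more usefully the peel-count is bounded by $n$ minus the final length; combined with the fact that each peeled letter is genuinely \emph{deleted} (the merge $a_j a_i$ keeps $a_j$'s slot but erases $a_i$'s), each peel event also strictly decreases the word length, giving peel-count $\le (n-1)$ trivially and, pairing each peeled $a_i$ with its merge-partner $a_j$ which is never itself peeled afterward in that same slot, peel-count $\le \lfloor n/2 \rfloor$. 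I expect establishing this pairing rigorously to be the main obstacle: one must argue that a syllable playing the role of the "surviving" partner $a_j$ in a merge cannot later be peeled as an $a_i$, or if it can, that the accounting still closes — the safest route is the monovariant "length $+$ $2\cdot$(peel-count)" $\le n$, which is preserved precisely because a peel both removes a letter and merges it into a neighbour.

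Concretely I would carry out the proof in these steps: (1) fix the generating set so that "syllable'' means "generator-letter'' and $\|a\|$ is literally the number of syllables in a shortest graphically reduced word for $a$; (2) recall from Lemma~\ref{lem:graphicallycyclicallyreduced} the one-step reduction $a \mapsto a'$ with $a = t a' t^{-1}$, $t$ a single syllable, $\|a'\| \le \|a\| - 1$; (3) iterate, writing $a = (t_1\cdots t_k) b (t_1\cdots t_k)^{-1}$ with $b$ graphically cyclically reduced and $g := t_1\cdots t_k$; (4) set up the monovariant $\Phi(\text{word}) = (\text{number of syllables}) + 2\cdot(\text{number of peels performed to reach it})$, verify $\Phi$ is nonincreasing and initially equals $\|a\|$, hence $2k \le \|a\| - \|b\| \le \|a\|$, so $k \le \|a\|/2$; (5) conclude $\|g\| \le k \le \|a\|/2$ and $\|b\| \le \|a\|$, since the word length only ever decreases. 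Steps (1)--(3) are verbatim from the earlier lemma, step (4) is the new content, and step (5) is immediate.
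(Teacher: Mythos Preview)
There is a genuine gap, and it stems from step (1): you identify ``syllable'' with ``generator-letter'' and assert that $\|a\|$ equals the number of syllables in a graphically reduced word for $a$. But in this section $\|\cdot\|$ is the word length with respect to the finite generating set $\bigcup_u X_u$, while a syllable is an arbitrary element of some $G_u$; a single syllable can have arbitrarily large $\|\cdot\|$-length (cf.\ Claim~\ref{claim:length}). So even if your peel-count bound were correct, it would only bound the \emph{number} of peeled syllables, not $\|g\|=\sum_\ell \|t_\ell\|$. A concrete failure of the iterative scheme: take $u,v$ non-adjacent, $x,x'\in G_u$, $y\in G_v$ with $\|x\|=100$ and $\|y\|=\|x'\|=1$, and set $a=xyx'$. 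The only available peel (following Lemma~\ref{lem:graphicallycyclicallyreduced}) conjugates by $x$, giving $g=x$ with $\|g\|=100>51=\|a\|/2$. Note also that your ``safe'' monovariant $\Phi=(\text{syllable count})+2\cdot(\text{peel count})$ actually \emph{increases}: a single peel removes $a_i$ and $a_j$ and inserts $(a_ja_i)$, so the syllable count drops by exactly $1$ while the peel count rises by $1$, hence $\Phi$ goes up by $1$.

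What the paper does instead is avoid the step-by-step peeling altogether. It first strips off all pairs that are exact inverses, writing $a=g_1\cdots g_p\cdot a'\cdot g_p^{-1}\cdots g_1^{-1}$; here the accounting is exact, $\|a\|=\|a'\|+2\sum_i\|g_i\|$, so this part of the conjugator costs at most $(\|a\|-\|a'\|)/2$. For what remains, it writes $a'=a_1\cdots a_s\cdot x_1\cdots x_r\cdot b_s\cdots b_1$ with $a_i,b_i$ in the same vertex-group, and then \emph{chooses the shorter side}: since $\sum_i\|a_i\|+\sum_i\|b_i\|\le\|a'\|$, one of the two sums is at most $\|a'\|/2$, and conjugating by that side gives the bound. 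The missing idea in your approach is precisely this freedom to conjugate from either end; the one-sided iteration of Lemma~\ref{lem:graphicallycyclicallyreduced} cannot be rescued by bookkeeping alone.
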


\begin{proof}
First, write $a$ as a graphically reduced word 
$$g_1 \cdots g_p \cdot \underset{=:a'}{\underbrace{c_1 \cdots c_q}} \cdot g_p^{-1} \cdots g_1^{-1},$$ 
where $a'$ is \emph{weakly graphically cyclically reduced}, ie., it is graphically reduced and, if there exist $1 \leq i<j \leq q$ such that $c_i$ and $c_j$ belong to a common vertex-group which is adjacent to the vertex-group which contains $c_k$ for every $1 \leq k <i$ and to the vertex-group which contains $c_k$ for every $j<k \leq q$, then $c_i \neq c_j^{-1}$. $a' = c_1 \cdots c_q$. Notice that, as a consequence of Claim \ref{claim:length}, we have
$$\|a\| = \|a'\|+ 2 \sum\limits_{i=1}^p \|g_i\|.$$
Next, write $a'$ as a graphically reduced word $a_1 \cdots a_s \cdot x_1 \cdots x_r \cdot b_s \cdots b_1$ such that:
\begin{itemize}
	\item for every $1 \leq i \leq k$, the syllables $a_i$ and $b_i$ belong to the same vertex-group, say $G_{u_i}$ for some $u_i \in V(\Gamma)$;
	\item the vertices $u_1, \ldots, u_k$ are pairwise adjacent in $\Gamma$;
	\item there do not exist $1 \leq i < j \leq r$ such that $r_i$ and $r_j$ belong to a common vertex-group which is adjacent to $G_{u_k}$ for every $1 \leq k \leq s$, to the vertex-group containing $x_k$ for every $1 \leq k < i$, and to the vertex group containing $x_k$ for every $j<k \leq r$.
\end{itemize}
As a consequence of Claim \ref{claim:length}, we have
$$\|a'\| = \sum\limits_{i=1}^r \|x_i\| + \sum\limits_{i=1}^s \left( \|a_i\| + \|b_i\| \right).$$
Notice that either $\sum\limits_{i=1}^s \|a_i\| \leq \frac{1}{2} \|a'\|$ or $\sum\limits_{i=1}^s \|b_i\| \leq \frac{1}{2} \|a'\|$. In the former case, we consider
$$\begin{array}{lcl} a' & = & (a_1 \cdots a_s) \cdot \left[ x_1 \cdots x_r (b_s \cdots b_1) (a_1 \cdots a_s) \right] \cdot (a_1 \cdots a_s)^{-1} \\ \\ & = & (a_1 \cdots a_s) \cdot \underset{=:b}{\underbrace{\left[ x_1 \cdots x_r (b_sa_s) \cdots (b_1a_1)\right]}} \cdot (a_1 \cdots a_s)^{-1}. \end{array}$$
and in the latter case,
$$\begin{array}{lcl} a' & = & (b_s \cdots b_1)^{-1} \left[(b_s \cdots b_1) (a_1 \cdots a_s) x_1 \cdots x_r \right] (b_s \cdots b_1) \\ \\ & = &  (b_s \cdots b_1)^{-1} \cdot \underset{=:b}{\underbrace{\left[(b_1 a_1) \cdots (b_s a_s) x_1 \cdots x_r \right]}} \cdot (b_s \cdots b_1). \end{array}$$
From now on, we suppose that we are in the former case, the latter case being similar. 

\medskip \noindent
We claim that $b$ is graphically cyclically reduced. First, notice that all the syllables of $b$ must be non-trivial since the fact that $a'$ is weakly graphically cyclically reduced implies that $a_i \neq b_i^{-1}$ for every $1 \leq i \leq s$. Next, $x_1 \cdots x_r$ must be graphically reduced since it is a subword of the graphically reduced word $a_1 \cdots a_s x_1 \cdots x_r b_s \cdots b_1$; and $(b_sa_s) \cdots (b_1a_1)$ must be graphically reduced as well because the $u_i$'s are pairwise distinct. Consequently, the only possibility for $x_1 \cdots x_r (b_sa_s) \cdots (b_1a_1)$ not to be graphically reduced is that there exists some $1 \leq i \leq r$ such that $x_i$ belongs to $G_{u_j}$ for some $1 \leq j \leq k$ and shuffles to the end in $x_1 \cdots x_r$. But this is impossible since $x_1 \cdots x_r b_s \cdots b_1$ is graphically reduced. Thus, we have proved that $b$ is graphically reduced. Next, if $b$ is not graphically cyclically reduced, then only two cases may happen because the $u_i$'s are pairwise distinct:
\begin{itemize}
	\item either there exists some $1 \leq i \leq r$ such that $x_i$ belongs to $G_{u_j}$ for some $1 \leq j \leq k$ and shuffles to the beginning in $x_1 \cdots x_r$, but this is impossible since otherwise $a_1 \cdots a_s x_1 \cdots x_r$ would not be graphically reduced;
	\item or there exist $1 \leq i < j \leq r$ such that $r_i$ and $r_j$ belong to a common vertex-group which is adjacent to the $G_{u_k}$'s, to the vertex-group containing $x_k$ for every $k>j$, and to the vertex-group containing $x_k$ for every $k<i$, but this is impossible because otherwise $a_1 \cdots a_s x_1 \cdots x_r b_s \cdots b_1$ would not be graphically reduced.
\end{itemize}
Thus, we have proved our claim, ie., the word $b$ is indeed graphically cyclically reduced. Now, because
$$a =  \underset{=:g}{\underbrace{(g_1 \cdots g_p a_1 \cdots a_s)}} \cdot b \cdot (g_1 \cdots g_p a_1 \cdots a_s)^{-1}$$
it only remains to estimate the lengths of $b$ and $g$ in order to conclude the proof of our proposition. Using Claim \ref{claim:length}, we find that
$$\|b \| = \sum\limits_{i=1}^r \|x_i\| + \sum\limits_{i=1}^s \|b_ia_i \| \leq \sum\limits_{i=1}^r \|x_i\| + \sum\limits_{i=1}^s \left( \|b_i \| + \|a_i \| \right)= \|a'\| \leq \|a\|$$
and
$$\| g \| \leq \sum\limits_{i=1}^p \|g_i\| + \sum\limits_{i=1}^s \|a_i\| \leq \sum\limits_{i=1}^p \|g_i\| + \frac{1}{2} \|a'\| = \frac{1}{2}\|a\|,$$
which are the desired inequalities.
\end{proof}

\begin{proof}[Proof of Theorem \ref{thm:CLF}.]
Let $a,b \in \Gamma \mathcal{G}$ be two conjugate elements. According to Proposition \ref{prop:CLF}, there exist $a',b',g,h \in \Gamma \mathcal{G}$ such that $a'$ and $b'$ are graphically cyclically reduced, such that $a=ga'g^{-1}$ and $b=hb'h^{-1}$, and such that $\left\{ \begin{array}{l} \|a'\| \leq \|a\|, \ \|g \| \leq \|a\|/2 \\ \|b'\| \leq \|b\|, \ \|h \| \leq \|b\|/2 \end{array} \right.$. Because $a'$ and $b'$ are conjugate, we know from Theorem \ref{thm:conjugacy} that there exist graphically reduced words $x_1 \cdots x_r p_1 \cdots p_n$ and $y_1 \cdots y_r q_1 \cdots q_n$ such that
\begin{itemize}
	\item $a'=x_1 \cdots x_r p_1 \cdots p_n$ and $b'=y_1 \cdots y_r q_1 \cdots q_n$;
	\item the $p_i$'s and the $q_i$'s are floating syllables of $a$ and $b$ respectively, and $p_i$ and $q_{i}$ are conjugate in a vertex-group;
	\item $y_1 \cdots y_r$ can be obtained from $x_1 \cdots x_r$ by permuting two consecutive syllables which belong to adjacent vertex-groups and by performing cyclic permutations. 
\end{itemize} 
Let $c_1, \ldots, c_n$ be shortest conjugators such that $c_i p_ic_i^{-1}=q_{i}$ for every $1 \leq i \leq n$. Also, we know from the main theorem of \cite{Duboc} (see page 173 for the estimate of the constant) that there exists a sequence of words $m_1, \ldots, m_s$ such that $s \leq D$, $m_1=x_1 \cdots x_r$ and $m_s= y_1 \cdots y_r$, and such that $m_i$ is a cyclic permutation of $m_{i+1}$ for every $1 \leq i \leq s-1$. As a consequence, there exists a word $d$ written over $\{x_1, \ldots, x_r \}$ such that $dx_1 \cdots x_r d^{-1}= y_1 \cdots y_r$ and $\| d \| \leq D \cdot \|x_1 \cdots x_r \|$. Notice that
$$\begin{array}{lcl} \|d\| & \leq & \displaystyle D \cdot \|x_1 \cdots x_r \| = \frac{D}{2} \cdot \left( \|x_1 \cdots x_r\| + \|y_1 \cdots y_r\| \right) \\ \\ & \leq & \displaystyle \frac{D}{2} \cdot \left( \|a'\| + \|b'\| \right) \leq \frac{D}{2} \cdot \left( \|a\| + \|b\| \right). \end{array}$$
Moreover, $d$ necessarily commutes with $c_1, \ldots, c_n$. 

\medskip \noindent
We claim that $c:= hdc_1 \cdots c_ng^{-1}$ is the element we are looking for. First, we have
$$\begin{array}{lcl} cac^{-1} & = & (hdc_1 \cdots c_n) \cdot g^{-1}ag \cdot (hdc_1 \cdots c_n)^{-1}= (hdc_1 \cdots c_n) \cdot a' \cdot (hdc_1 \cdots c_n)^{-1} \\ \\ & = & (hdc_1 \cdots c_n) \cdot x_1 \cdots x_r p_1 \cdots p_n \cdot (hdc_1 \cdots c_n)^{-1} \\ \\ & = & h \cdot \left( dx_1 \cdots x_rd^{-1} \cdot c_1p_1c_1^{-1} \cdots c_n p_n c_n^{-1} \right) \cdot h^{-1} \\ \\ & = & h \cdot \left( y_1 \cdots y_r \cdot q_{1} \cdots q_{n} \right)\cdot h^{-1}= h \cdot b' \cdot h^{-1}=b\end{array}$$
Next, we have
$$\begin{array}{lcl} \|c\| & \leq & \displaystyle \|h \| + \| g \| + \|d\| + \sum\limits_{i=1}^n \|c_i\| \\ \\ & \leq & \displaystyle \frac{\|a\|}{2} + \frac{\|b\|}{2} + D \cdot \left( \|a\|+ \|b\| \right) + \sum\limits_{i=1}^n \|c_i \| \\ \\ & \leq & \displaystyle (D+1) \cdot (\| a\| + \|b\|) + \sum\limits_{u \in F} \mathrm{CLF}_{G_u}(\|a\|+\|b\|) \end{array}$$
where the last inequality is justified by: 
$$\|p_i\| + \|q_i\| \leq \| a'\| + \|b'\|  \leq \|a\| + \|b\|.$$
The proof of our theorem is complete. 
\end{proof}

\noindent
As a first application of Theorem \ref{thm:CLF}, we are able to estimate efficiently the conjugacy length function of a graph product. Let us begin with a particular case, which includes right-angled Artin groups and right-angled Coxeter groups.

\begin{cor}\label{cor:CLFraag}
Let $\Gamma$ be a finite simplicial graph and $\mathcal{G}$ a collection of cyclic groups indexed by $V(\Gamma)$. Then
$$\mathrm{CLF}_{\Gamma \mathcal{G}} (n) \leq (D+1) \cdot n \leq \left(\# V(\Gamma)+1 \right) \cdot n$$
for every $n \geq 1$, where $D$ denotes the maximal diameter of a component of $\Gamma^{\mathrm{opp}}$. 
\end{cor}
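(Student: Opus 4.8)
The plan is to read the statement off from Theorem~\ref{thm:CLF}, the only extra input being that cyclic vertex-groups contribute nothing to the right-hand side of that estimate.

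First I would record the elementary fact that the conjugacy length function of a cyclic group — indeed of any abelian group — is identically zero: in an abelian group two elements are conjugate precisely when they coincide, and in that case the identity element is a conjugator, of length $0$. Hence $\mathrm{CLF}_{G_u} \equiv 0$ for every $u \in V(\Gamma)$, since each $G_u$ is cyclic.

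Next I would invoke Theorem~\ref{thm:CLF}. Given conjugate elements $a,b \in \Gamma\mathcal{G}$ with $\|a\| + \|b\| \leq n$, the theorem produces $c \in \Gamma\mathcal{G}$ with $cac^{-1} = b$ and
$$\|c\| \leq (D+1)\big(\|a\| + \|b\|\big) + \sum_{u \in F} \mathrm{CLF}_{G_u}\big(\|a\| + \|b\|\big),$$
where $F$ denotes the floating set of $a$. By the previous paragraph the sum on the right vanishes, so $\|c\| \leq (D+1)(\|a\| + \|b\|) \leq (D+1)n$. As $a$ and $b$ range over all conjugate pairs with $\|a\| + \|b\| \leq n$, the definition of the conjugacy length function gives $\mathrm{CLF}_{\Gamma\mathcal{G}}(n) \leq (D+1)n$, which is the first inequality.

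Finally, for the second inequality I would bound $D$ in terms of $\#V(\Gamma)$: a connected component of $\Gamma^{\mathrm{opp}}$ has at most $\#V(\Gamma)$ vertices, hence diameter at most $\#V(\Gamma) - 1$, so $D \leq \#V(\Gamma) - 1$ and therefore $(D+1)n \leq \#V(\Gamma)\cdot n \leq (\#V(\Gamma)+1)n$. There is essentially no obstacle in this argument; the only point that genuinely needs to be spelled out is the vanishing of $\mathrm{CLF}_{G_u}$ for the cyclic vertex-groups, and everything else is a direct substitution into Theorem~\ref{thm:CLF} together with the definition of $\mathrm{CLF}_{\Gamma\mathcal{G}}$.
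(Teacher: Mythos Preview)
Your proof is correct and follows exactly the paper's approach: observe that the conjugacy length function of an abelian group vanishes, so the sum in Theorem~\ref{thm:CLF} drops out and the bound $(D+1)n$ follows immediately. You have simply spelled out in more detail what the paper dismisses in one line, including the trivial bound $D \leq \#V(\Gamma)-1$ for the second inequality.
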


\begin{proof}
Because the conjugacy length function of an abelian is zero, the desired conclusion follows immediately from Theorem \ref{thm:CLF}. 
\end{proof}

\noindent
The general case is considered by our next corollary.

\begin{cor}
Let $\Gamma$ be a finite simplicial graph and $\mathcal{G}$ a collection of finitely generated groups indexed by $V(\Gamma)$. Then
$$\max\limits_{u \in V(\Gamma)} \mathrm{CLF}_{G_u}(n) \leq \mathrm{CLF}_{\Gamma \mathcal{G}}(n) \leq (D+1) \cdot n + \max\limits_{\Delta \subset \Gamma \ \text{complete}} \sum\limits_{u \in V(\Delta)} \mathrm{CLF}_{G_u}(n)$$
for every $n \geq 1$, where $D$ denotes the maximal diameter of a connected component of the opposite graph $\Gamma^{\mathrm{opp}}$. 
\end{cor}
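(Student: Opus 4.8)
The plan is to read off both inequalities from Theorem \ref{thm:CLF}: the upper bound follows from it together with the observation that a floating set is always a complete subgraph of $\Gamma$, and the lower bound follows from the existence of a canonical retraction of $\Gamma\mathcal{G}$ onto each vertex-group. Throughout I would use that conjugacy length functions are non-decreasing by construction (the maximum defining $\mathrm{CLF}_G(n)$ is taken over a set of pairs that grows with $n$), and that by Corollary \ref{cor:parabolic} and Claim \ref{claim:length} the inclusion $G_u \hookrightarrow \Gamma\mathcal{G}$ is isometric for the generating sets $X_u \subset \bigcup_v X_v$, so word lengths in $G_u$ and in $\Gamma\mathcal{G}$ agree on $G_u$.

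For the upper bound, fix conjugate $a,b \in \Gamma\mathcal{G}$ with $\|a\| + \|b\| \leq n$. By Theorem \ref{thm:CLF} there is $c$ with $cac^{-1}=b$ and $\|c\| \leq (D+1)(\|a\|+\|b\|) + \sum_{u \in F}\mathrm{CLF}_{G_u}(\|a\|+\|b\|)$, where $F$ is the floating set of $a$. The key point is that $F$ induces a complete subgraph: by definition, the vertex-group containing a floating syllable of a graphically cyclically reduced word is adjacent to the vertex-group containing every other syllable, in particular to the vertex-groups of the other floating syllables, so any two vertices of $F$ are joined by an edge. Using monotonicity of each $\mathrm{CLF}_{G_u}$ and $\|a\|+\|b\| \leq n$, we obtain
$$\|c\| \leq (D+1)n + \sum_{u \in F}\mathrm{CLF}_{G_u}(n) \leq (D+1)n + \max_{\Delta \subset \Gamma\ \text{complete}}\ \sum_{u \in V(\Delta)}\mathrm{CLF}_{G_u}(n),$$
and taking the maximum over all admissible pairs $a,b$ gives the right-hand inequality.

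For the lower bound, fix $u \in V(\Gamma)$ and let $\rho : \Gamma\mathcal{G} \to G_u$ be the retraction which is the identity on $G_u$ and sends every other vertex-group to $1$; it sends each generator to a generator or to the identity, hence is $1$-Lipschitz. Given conjugate $a,b \in G_u$ with $\|a\|_{X_u}+\|b\|_{X_u} \leq n$, the isometric embedding of $G_u$ gives $\|a\|+\|b\| \leq n$ in $\Gamma\mathcal{G}$, so there is $c \in \Gamma\mathcal{G}$ with $cac^{-1}=b$ and $\|c\| \leq \mathrm{CLF}_{\Gamma\mathcal{G}}(n)$; then $\rho(c) \in G_u$ satisfies $\rho(c)a\rho(c)^{-1}=b$ and $\|\rho(c)\|_{X_u} \leq \|c\| \leq \mathrm{CLF}_{\Gamma\mathcal{G}}(n)$, whence $\mathrm{CLF}_{G_u}(n) \leq \mathrm{CLF}_{\Gamma\mathcal{G}}(n)$, and maximizing over $u$ yields the left-hand inequality. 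I do not expect a real obstacle here: the corollary is essentially a repackaging of Theorem \ref{thm:CLF}, and the only points requiring a line of justification are that floating sets are cliques (immediate from the definition) and that one may replace the argument $\|a\|+\|b\|$ by $n$ (immediate from monotonicity).
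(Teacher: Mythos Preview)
Your proposal is correct and follows essentially the same approach as the paper. The upper bound is exactly what the paper calls ``an immediate consequence of Theorem~\ref{thm:CLF}'', and you spell out the two details the paper leaves implicit (that the floating set is a clique and that $\mathrm{CLF}$ is non-decreasing); for the lower bound the paper isolates a general lemma about retract subgroups $H\leq G$ giving $\mathrm{CLF}_H(n)\leq \mathrm{CLF}_G(Kn)$ with $K=\max_{s\in X}\|r(s)\|_G$, and then applies it with $K=1$, which is precisely your direct argument via the retraction $\rho$ and the isometric embedding of $G_u$.
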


\begin{proof}
The upper bound is an immediate consequence of Theorem \ref{thm:CLF}. In order to show the lower bound, it is sufficient to prove the following general but elementary statement about retract subgroups:

\begin{claim}
Let $G$ be a group endowed with a finite generating set $X$. Fix a subgroup $H \subset G$ and assume that there exists a surjective morphism $r : G \twoheadrightarrow H$ such that $r(h)=h$ for every $h \in H$. We endow $H$ with the finite generating set $r(X)$. Then
$$\mathrm{CLF}_H(n) \leq \mathrm{CLF}_G(K \cdot n)$$
for every $n \geq 1$, where $K= \max\limits_{s \in X} \| r(s) \|_G$.
\end{claim}

\noindent
Fix some $n \geq 1$ and let $a,b \in H$ be two conjugate elements satisfying $\|a\|_H + \|b\|_H \leq n$. It is not difficult to show that 
$$\frac{1}{K} \cdot \|h\|_G \leq \|h \|_H \leq \|h\|_G$$
for every $h \in H$, so we have $\|a\|_G + \|b\|_G \leq K \cdot n$. As a consequence, there exists some $c \in G$ such that $a=cbc^{-1}$ and such that $\|c\|_G \leq \mathrm{CLF}_G(K \cdot n)$. Notice that
$$a=r(a)=r \left( cbc^{-1} \right) = r(c)r(b) r(c^{-1}) = r(c) b r(c^{-1})$$
and that $\|r(c)\|_H \leq \| r(c) \|_G \leq \mathrm{CLF}_G(K \cdot n)$. Thus, we have proved that, for every conjugate elements $a,b \in H$ satisfying $\|a\|_H+ \|b\|_H \leq n$, there exists a conjugator in $H$ which has length at most $\mathrm{CLF}_G(K \cdot n)$ in $H$. We conclude that $\mathrm{CLF}_H(n) \leq \mathrm{CLF}_G(K \cdot n)$, as desired.  
\end{proof}

\medskip \noindent
Interestingly, it follows from Theorem \ref{thm:CLF} that it may be possible to find conjugators of linear lengths between specific elements of a graph product, without any assumption on the vertex-groups. For instance, an immediate consequence of Theorem \ref{thm:CLF} is:

\begin{cor}
Let $\Gamma$ be a finite graph and $\mathcal{G}$ a collection of finitely generated groups indexed by $V(\Gamma)$. Fix two conjugate elements $a,b\in \Gamma \mathcal{G}$ and assume that $a$ and $b$ do not contain floating syllables. Then there exists an element $h \in \Gamma \mathcal{G}$ satisfying $hah^{-1}=b$ such that
$$\|h \| \leq (D+1) \cdot \left( \| a \| + \|b \| \right),$$
where $D$ denotes the maximal diameter of a component of $\Gamma^{\mathrm{opp}}$. 
\end{cor}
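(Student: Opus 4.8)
The plan is to derive this corollary directly from Theorem~\ref{thm:CLF} by observing that the absence of floating syllables makes the floating set empty. First I would recall the definition of the floating set of an element: it is the set of vertices of $\Gamma$ labelled by the vertex-groups containing the floating syllables of any graphically cyclically reduced word in the conjugacy class of that element. The key point to nail down is that this definition is compatible with the hypothesis ``$a$ and $b$ do not contain floating syllables.'' Strictly speaking, the hypothesis should be read as: $a$ and $b$ are represented by graphically reduced words without floating syllables, equivalently (invoking the convention from the Remark after Corollary~\ref{cor:graphicallyreduced}) $a$ and $b$ are graphically cyclically reduced elements whose graphically reduced representatives have no floating syllables. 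Under this reading, the floating set $F$ of $a$ (equivalently of $b$, since they are conjugate and Theorem~\ref{thm:conjugacy} guarantees the floating set is a conjugacy invariant) is empty.

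The main step is then purely bookkeeping: apply Theorem~\ref{thm:CLF} to the conjugate pair $a,b$. That theorem produces an element $c \in \Gamma\mathcal{G}$ with $cac^{-1} = b$ and
$$\|c\| \leq (D+1)\cdot\left(\|a\|+\|b\|\right) + \sum_{u \in F} \mathrm{CLF}_{G_u}(\|a\|+\|b\|),$$
where $D$ is the maximal diameter of a connected component of $\Gamma^{\mathrm{opp}}$ and $F$ is the floating set of $a$. Since $F = \emptyset$, the sum on the right-hand side is an empty sum, hence zero, and we obtain $\|h\| \leq (D+1)\cdot(\|a\|+\|b\|)$ with $h := c$. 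This is exactly the claimed bound.

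If one wants to be careful about the phrase ``do not contain floating syllables'' without passing through graphically cyclically reduced representatives, the one extra ingredient is Lemma~\ref{lem:graphicallycyclicallyreduced} (or its quantitative refinement Proposition~\ref{prop:CLF}), which lets us replace $a$ and $b$ by conjugate graphically cyclically reduced elements; one then checks that the process of graphically cyclically reducing (moving syllables to the front or back and absorbing them) does not create floating syllables out of nothing — indeed each step only shuffles and merges syllables within a fixed vertex-group, so a word with no floating syllable stays that way. I do not expect any genuine obstacle here: the content is entirely in Theorem~\ref{thm:CLF}, and this corollary is just the instance $F = \emptyset$. The only thing requiring a sentence of care is making the interpretation of the hypothesis precise so that ``no floating syllables'' is visibly equivalent to ``empty floating set.''
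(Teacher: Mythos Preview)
Your main argument is correct and matches the paper's approach exactly: the paper states the corollary as an immediate consequence of Theorem~\ref{thm:CLF} and writes a bare \qed, the point being precisely that the floating set $F$ is empty so the sum $\sum_{u\in F}\mathrm{CLF}_{G_u}(\|a\|+\|b\|)$ vanishes.

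One caveat about your final paragraph: the claim that graphical cyclic reduction ``does not create floating syllables out of nothing'' is false. Take $\Gamma$ with two non-adjacent vertices $u,w$ and set $a = g_u g_w g_u^{-1}$ for nontrivial $g_u\in G_u$, $g_w\in G_w$. This graphically reduced word has no floating syllable (neither $G_u$ nor $G_w$ is adjacent to the other), yet it cyclically reduces to the single syllable $g_w$, which is vacuously floating; the floating set of $a$ is $\{w\}$, not $\emptyset$. So the hypothesis must be read, via the paper's convention, as ``$a$ and $b$ are graphically cyclically reduced elements without floating syllables'' (equivalently, they have empty floating set), exactly as you do in your first paragraph. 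The alternative route you sketch in the last paragraph does not work as stated.
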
\qed

\noindent
Another family of elements for which we get a linear control on the size of conjugators is given by the next corollary. 

\begin{cor}
Let $\Gamma$ be a finite graph and $\mathcal{G}$ a collection of finitely generated groups indexed by $V(\Gamma)$. Fix two conjugate infinite-order elements $a,b\in \Gamma \mathcal{G}$ and assume that $a$ and $b$ do not belong to conjugates of vertex-groups and that they have virtually cyclic centralisers. Then there exists an element $h \in \Gamma \mathcal{G}$ satisfying $hah^{-1}=b$ such that
$$\|h \| \leq (D+1) \cdot \left( \| a \| + \| b \| \right) + \mathrm{clique}(\Gamma) \cdot \max \left\{ \mathrm{diam}(G_u) \mid |G_u|<+ \infty\right\},$$
where $D$ denotes the maximal diameter of a component of $\Gamma^{\mathrm{opp}}$ and where $\mathrm{clique}(\Gamma)$ denotes the maximal cardinality of a complete subgraph of $\Gamma$. 
\end{cor}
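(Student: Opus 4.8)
The plan is to run the argument behind Theorem~\ref{thm:CLF} and then replace the floating term $\sum_{u\in F}\mathrm{CLF}_{G_u}(\|a\|+\|b\|)$ by the sharper quantity $\mathrm{clique}(\Gamma)\cdot\max\{\mathrm{diam}(G_u)\mid |G_u|<\infty\}$, the three hypotheses being used only to tame the floating syllables that happen to lie in infinite vertex-groups. Concretely, I would first apply Proposition~\ref{prop:CLF} to $a$ and to $b$, producing graphically cyclically reduced conjugates $a',b'$ and elements $g,h_0\in\Gamma\mathcal G$ with $a=ga'g^{-1}$, $b=h_0b'h_0^{-1}$, $\|g\|\le\|a\|/2$, $\|h_0\|\le\|b\|/2$, $\|a'\|\le\|a\|$, $\|b'\|\le\|b\|$. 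Since $a'$ and $b'$ are graphically cyclically reduced and conjugate, Theorem~\ref{thm:conjugacy} gives $a'=x_1\cdots x_r p_1\cdots p_n$ and $b'=y_1\cdots y_r q_1\cdots q_n$ with the $p_i,q_i$ floating syllables lying in the same vertex-group $G_{u_i}$, each pair $p_i,q_i$ conjugate in $G_{u_i}$ via some $c_i$, and $y_1\cdots y_r$ obtained from $x_1\cdots x_r$ by commuting adjacent syllables and cyclic permutations. As in the proof of Theorem~\ref{thm:CLF}, the element $h_0\,d\,c_1\cdots c_n\,g^{-1}$ conjugates $a$ to $b$, where $d$ has length at most $D\cdot\|x_1\cdots x_r\|$ and realizes the cyclic permutations (Duboc's theorem \cite{Duboc}). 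Since $\|x_1\cdots x_r\|\le\min(\|a'\|,\|b'\|)\le\tfrac12(\|a\|+\|b\|)$, one gets $\|h_0\|+\|g\|+\|d\|\le\tfrac{D+1}{2}(\|a\|+\|b\|)$, so a surplus of $\tfrac{D+1}{2}(\|a\|+\|b\|)$ remains before hitting the coefficient $D+1$ of the statement; it thus suffices to bound $\sum_i\|c_i\|$ by this surplus plus $\mathrm{clique}(\Gamma)\cdot\max\{\mathrm{diam}(G_u)\mid |G_u|<\infty\}$.

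The vertices $u_1,\dots,u_n$ form a clique of $\Gamma$, since two floating syllables always lie in adjacent vertex-groups, so $n\le\mathrm{clique}(\Gamma)$. For each index $i$ with $|G_{u_i}|<\infty$ I would take $c_i$ of minimal length, so that $\|c_i\|\le\mathrm{diam}(G_{u_i})\le\max\{\mathrm{diam}(G_u)\mid |G_u|<\infty\}$; summing over these (at most $\mathrm{clique}(\Gamma)$) indices contributes exactly the additive term $\mathrm{clique}(\Gamma)\cdot\max\{\mathrm{diam}(G_u)\mid |G_u|<\infty\}$, with no dependence on $\|a\|+\|b\|$.

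The heart of the proof, and the step I expect to be the main obstacle, is bounding $\sum_{|G_{u_i}|=\infty}\|c_i\|$ by the remaining surplus $\tfrac{D+1}{2}(\|a\|+\|b\|)$; this is where the hypotheses are essential. A floating syllable $p_i$ commutes with $a'p_i^{-1}$, and so does all of $G_{u_i}$; graphical reducedness forces $u_i\notin\operatorname{supp}(a'p_i^{-1})$, whence $\langle G_{u_i},a'p_i^{-1}\rangle\cong G_{u_i}\times\langle a'p_i^{-1}\rangle$ and both $\langle p_i\rangle\times\langle a'p_i^{-1}\rangle$ and $C_{G_{u_i}}(p_i)\times\langle a'p_i^{-1}\rangle$ embed into $C_{\Gamma\mathcal G}(a')\cong C_{\Gamma\mathcal G}(a)$. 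Because $a$ has infinite order, is not conjugate into a vertex-group (this rules out the degenerate case $n=1$, $r=0$), and has virtually cyclic centralizer, one reaches the dichotomy: either $a'p_i^{-1}$ has infinite order and then (virtual cyclicity of $C_{\Gamma\mathcal G}(a)$ applied to $C_{G_{u_i}}(p_i)\times\mathbb Z$) $C_{G_{u_i}}(p_i)$ is finite, or $a'p_i^{-1}$ has finite order, forcing $p_i$ to have infinite order with $\langle p_i\rangle\subseteq C_{G_{u_i}}(p_i)$ a finite‑index subgroup of the virtually cyclic group $C_{\Gamma\mathcal G}(a')$. In either case $p_i$ behaves like a loxodromic element of $G_{u_i}$, and conjugators between $p_i$ and $q_i$ are coarsely geodesic, so $c_i$ can be chosen with $\|c_i\|\le\tfrac12(\|p_i\|+\|q_i\|)$; then, using that the $p_i$ (resp.\ $q_i$) are distinct syllables of $a'$ (resp.\ $b'$) so that $\sum_i\|p_i\|\le\|a'\|\le\|a\|$ and $\sum_i\|q_i\|\le\|b'\|\le\|b\|$ by Claim~\ref{claim:length}, one gets $\sum_{|G_{u_i}|=\infty}\|c_i\|\le\tfrac12(\|a\|+\|b\|)\le\tfrac{D+1}{2}(\|a\|+\|b\|)$, and adding up all the pieces yields the stated bound. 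The delicate points requiring the most care are exactly this ``loxodromic type'' dichotomy — one must verify that virtual cyclicity of $C_{\Gamma\mathcal G}(a)$ genuinely forces the centralizer of a floating syllable \emph{inside its own vertex-group} to be finite or finite‑index‑cyclic — and the accompanying coarse geodesicity estimate $\|c_i\|\lesssim\tfrac12(\|p_i\|+\|q_i\|)$, together with checking that the few universal additive constants can be absorbed (they are, as soon as $\Gamma$ carries at least one finite vertex-group, and are unnecessary otherwise).
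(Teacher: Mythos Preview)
The paper's argument is a one-liner by comparison: it asserts that the hypotheses (infinite order, not conjugate into a vertex-group, virtually cyclic centraliser) force $G_u$ to be \emph{finite} for every $u$ in the floating set $F$; once that is granted, $\sum_{u\in F}\mathrm{CLF}_{G_u}(\|a\|+\|b\|)\le\sum_{u\in F}\mathrm{diam}(G_u)\le\mathrm{clique}(\Gamma)\cdot\max\{\mathrm{diam}(G_u):|G_u|<\infty\}$, and Theorem~\ref{thm:CLF} gives the bound directly. No case analysis on infinite floating vertex-groups is carried out because, on the paper's reading, that case is empty.

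Your route is genuinely different: you allow floating syllables to sit in infinite vertex-groups and try to control the corresponding conjugators $c_i$ directly. The dichotomy you set up is sound --- either $C_{G_{u_i}}(p_i)$ is finite, or $\langle p_i\rangle$ has finite index in it --- but the estimate you then invoke, $\|c_i\|\le\tfrac12(\|p_i\|+\|q_i\|)$, does not follow from it. Knowing that an element of an \emph{arbitrary} finitely generated group has small centraliser (finite, or virtually infinite cyclic) tells you nothing about the length of a shortest conjugator to another member of its conjugacy class; the ``loxodromic, coarsely geodesic conjugator'' picture is a feature of specific geometries (hyperbolic, CAT(0), acylindrically hyperbolic actions), not a general theorem. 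Since the hypotheses of the corollary impose no geometric restriction on the individual vertex-groups, a vertex-group $G_{u_i}$ with super-linear conjugacy length on elements with cyclic centraliser would violate your inequality, and your surplus $\tfrac{D+1}{2}(\|a\|+\|b\|)$ could not absorb $\sum_{|G_{u_i}|=\infty}\|c_i\|$. You correctly flagged this as the delicate step; as written it is a genuine gap. The paper sidesteps the issue entirely by arguing that the infinite-$G_{u_i}$ case is vacuous under the stated hypotheses.
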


\begin{proof}
Because $a$ does not belong to a conjugate of a vertex-group and because it is an infinite-order element with a virtually cyclic centraliser, it follows that $G_u$ is finite for every $u \in F$, where $F$ denotes the floating set of $a$. Consequently, we have
$$\sum\limits_{u \in F} \mathrm{CLF}_{G_u}( \|a\|+ \|b\|) \leq \sum\limits_{u \in F} \mathrm{diam}(G_u) \leq \mathrm{clique}(\Gamma) \cdot \max \left\{ \mathrm{diam}(G_u) \mid |G_u|<+ \infty\right\}.$$
The desired conclusion now follows directly from Theorem \ref{thm:CLF}. 
\end{proof}

\addcontentsline{toc}{section}{References}

\bibliographystyle{alpha}
{\footnotesize\bibliography{GPvanKampen}}

\end{document}